\providecommand{\R}{}
\renewcommand{\R}{\mathbb{R}}
\newcommand{\set}[1]{\left\{ #1 \right\}}
\newcommand{\Esub}[2]{{\mathbf E_{#1}}\left[#2\right]}
\newcommand\cB{\mathcal B}
\newcommand\cC{\mathcal C}
\newcommand\cD{\mathcal D}
\newcommand\cE{\mathcal E}
\newcommand\cF{\mathcal F}
\newcommand\cP{\mathcal P}
\newcommand\cS{{\mathcal S}}
\newcommand\cU{{\mathcal U}}
\newcommand{\bE}{\mathbf{E}}
\newcommand{\bP}{\mathbf{P}} 
\newcommand{\bQ}{\mathbf{Q}} 
\newcommand{\bR}{\mathbf{R}}
\newcommand{\f}{\mathfrak{f}} 
\newcommand{\g}{\mathfrak{g}} 
\renewcommand{\c}{\mathfrak{c}}   
\renewcommand{\a}{1}
\renewcommand{\d}{2}
\newcommand{\dbleft}{[\![}
\newcommand{\dbright}{]\!]}
\newcommand{\db}[1]{\left\dbleft #1 \right\dbright}
\newcommand{\rS}{\cS} 
\newcommand{\leb}{\mathfrak{m}} 
\newcommand{\pran}[1]{\left(#1\right)}
\newcommand{\brac}[1]{\left[#1\right]}
\newcommand{\inn}[1]{\langle#1\rangle}
\newcommand{\abs}[1]{\left|#1\right|}
\newcommand{\norm}[1]{\left\|#1\right\|}
\newcommand{\KL}[2]{\mathrm{KL}(#1||#2)}
\providecommand{\ora}[1]{}
\renewcommand{\ora}[1]{\overrightarrow{#1}}
\newcommand\one{\mathbbm{1}}
\newcommand{\customlabel}[2]{%
   \protected@write \@auxout {}{\string \newlabel {#1}{{#2}{\thepage}{#2}{#1}{}} }%
   \hypertarget{#1}{}
}
\newtheorem{thm}{Theorem}
\newtheorem{lem}[thm]{Lemma}
\newtheorem{prop}[thm]{Proposition}
\newtheorem{cor}[thm]{Corollary}
\newtheorem{dfn}[thm]{Definition}
\numberwithin{thm}{subsection}
\numberwithin{equation}{subsection}
\definecolor{clc}{rgb}{0,0.53,0.74}
\definecolor{caz}{rgb}{0,0.5,0}
\begin{document}

\title{The Schr\"odinger Bridge Problem for Jump Diffusions with Regime Switching}
\author{Andrei Zlotchevski}
\address{Department of Mathematics and Statistics, McGill University, 
Montreal, Canada}
\thanks{We acknowledge the support of the Natural Sciences and Engineering Research Council of Canada (NSERC), awards 559387-2021 and 241023.}
\email{andrei.zlotchevski@mail.mcgill.ca}
\author{Linan Chen}
\email{linan.chen@mcgill.ca}

\begin{abstract}
    The Schr\"odinger bridge problem (SBP) aims at finding the measure $\widehat\bP$ on a certain path space which possesses the desired state-space distributions $\rho_0$ at time $0$ and $\rho_T$ at time $T$ while minimizing the KL divergence from a reference path measure $\bR$. This work focuses on the SBP in the case when $\bR$ is the path measure of a jump diffusion with regime switching, which is a Markov process that combines the dynamics of a jump diffusion with interspersed discrete events representing changing environmental states. To the best of our knowledge, the SBP in such a setting has not been previously studied. 
    
    In this paper, we conduct a comprehensive analysis of the dynamics of the SBP solution $\widehat\bP$ in the regime-switching jump-diffusion setting. In particular, we show that $\widehat\bP$ is again a path measure of a regime-switching jump diffusion; under proper assumptions, we establish various properties of $\widehat\bP$ from both a stochastic calculus perspective and an analytic viewpoint. In addition, as a demonstration of the general theory developed in this work, we examine a concrete unbalanced SBP (uSBP) from the angle of a regime-switching SBP, where we also obtain novel results in the realm of uSBP.
\end{abstract}
\keywords{Schr\"odinger bridge, regime-switching jump diffusion, stochastic control, KL divergence, Sinkhorn algorithm, unbalanced Schr\"odinger bridge, jump diffusion with killing}
\subjclass{35Q93, 45K05, 49J20, 60H10, 60H30, 60J25, 93E20, 94A17}

\maketitle
\section{Introduction}
Our topic of interest is the Schr\"odinger bridge problem (SBP) for regime-switching jump diffusions. As an extension of the framework of the jump-diffusion SBP \cite{zlotchevski2024schrodinger}, the formulation adopted in this work incorporates additional layers of complexity in order to model real-world phenomena present under changing environmental states. To begin, jump diffusions themselves generalize classical diffusions by integrating jumps driven by L\'evy measures, thereby capturing sudden events such as market crashes or neuron firings. Regime switching further enhances this by having the drift, diffusion, and jump coefficients vary according to a Markov chain on a finite set of states called \textit{regimes}, which represents sudden environmental changes. We are interested in studying the SBP in such a setting. Even though the general study of the SBP has a long and rich history, the regime-switching SBP has not seen much development in the literature. This work is dedicated to introducing a general theory on the SBP for regime-switching jump diffusions.  

\subsection{Set-up of the problem}\label{Subsection: setup} 

Our model is set on a ``hybrid'' state space $\R^d\times \rS$ over a finite time interval $t\in[0,T]$, where $\rS$ is a finite set consisting of all the regimes. Without loss of generality, we will assume that $\rS=\set{1,2,\dots,\abs{\rS}}$. Let $\Omega=D([0,T];\R^d \times \rS)$ be the Skorokhod space, i.e., the space of c\`adl\`ag functions $\omega : t\in[0,T] \mapsto(\omega^{\R^d}(t),\omega^{\rS}(t))\in \R^d\times\rS$, equipped with the Skorokhod metric. For every $t\in[0,T]$, define $(X_t,\Lambda_t):\Omega\rightarrow\R^d\times\rS$ to be the time projection such that, for every $\omega\in\Omega$, $X_t(\omega)=\omega^{\R^d}(t)$ and $\Lambda_t(\omega)=\omega^{\rS}(t)$. If $\cF:=\cB_\Omega$ is the Borel $\sigma$-algebra on $\Omega$, then $\cF$ is generated by the time projections $\set{(X_t,\Lambda_t)}_{[0,T]}$. Let $\set{\cF_t}_{[0,T]}$ be the natural filtration (or its usual augmentation) associated with $\set{(X_t,\Lambda_t)}_{[0,T]}$. Denote by $\cP(\Omega)$ the set of probability measures on $(\Omega,\cF)$. For $\bP\in\cP(\Omega)$, to which we refer as a \textit{path measure}, we say that $\set{(X_t,\Lambda_t)}_{[0,T]}(\omega)$ is the \textit{canonical process} under $\bP$ if $\omega\in\Omega$ is sampled under $\bP$.  Finally, for $0\leq t \leq T$, we write the distribution of $(X_t,\Lambda_t)$ under $\bP$ as $\bP_t$, and call it the \textit{marginal distribution} at time $t$.\\ 

Now we present the rigorous definition of the \textit{Schr\"odinger Bridge Problem} (SBP). 
\begin{dfn}
    Given a path measure $\bR\in\cP(\Omega)$ and two probability distributions $\rho_0,\rho_T$ on $\R^d \times \rS$, the \textup{Schr\"odinger bridge problem} is to find the path measure $\widehat\bP\in\cP(\Omega)$ such that
\begin{equation}\label{Definition:DynamicSBP}
    \widehat\bP=\arg\min\set{\KL{\bP}{\bR} \text{ such that } \bP_0=\rho_0,\bP_T=\rho_T }.
\end{equation}
\end{dfn}
\noindent Here, $\KL{\cdot}{\cdot}$ denotes the \textit{KL divergence} (relative entropy), defined as \[\KL{\bP}{\bR}:=  \Esub{\bP}{\log\pran{\frac{d\bP}{d\bR}}}\;\textup{ if }\bP\ll\bR,\;+\infty\textup{ otherwise.}\]
In the above SBP, we refer to $\bR$ as the \textit{reference measure}, $\rho_0,\rho_T$ as the \textit{target distributions}, and the solution $\widehat\bP$, if it exists, as the \textit{Schr\"odinger bridge}. Heuristically speaking, the SBP is the quest to find the path ``bridging'' $\rho_0$ and $\rho_T$ that is the ``closest'' to a given reference. 

In this paper, we consider the SBP with the reference measure $\bR$ arising from a \textit{regime-switching jump diffusion}, which is a two-component strong Markov\footnote[2]{Throughout the paper, all the occasions of ``(strong) Markov process'', ``(strong) Markov property'' and ``martingale'' are with respect to the natural filtration associated with the concerned process.} process $\set{(X_t,\Lambda_t)}_{[0,T]}$ (defined on some generic probability space) with $(\R^d\times\rS)$-valued c\`adl\`ag sample paths. Below we will explain each component of this process. 

We start with the $\R^d$-valued spatial component $X_t$. For every value of $\Lambda_t$, we take $\set{X_t}_{[0,T]}$ to be a jump-diffusion process governed by the following stochastic differential equation (SDE):
\begin{equation}\label{SDE for Xt without hybrid jumps}
 \begin{aligned}   dX_t&= b(t,X_{t},\Lambda_t)dt +\sigma(t,X_{t},\Lambda_t)dB_t\\ 
 &\hspace{0.5cm}+\int_{|z|\leq 1}\gamma(t,X_{t-},\Lambda_{t-} ,z)\tilde{N}(dt,dz)
+\int_{|z|>1}\gamma(t,X_{t-},\Lambda_{t-} ,z)N(dt,dz),
\end{aligned}
\end{equation}
where $b,\sigma,\gamma$ are coefficient functions satisfying certain conditions (which will be made explicit in Assumption \ref{Assumptions C}), $\set{B_t}_{t\geq0}$ is a standard $d$-dimensional Brownian motion and $N(dt,dz)$ is an $\ell$-dimensional Poisson random measure with intensity $\nu(dz)$, with $\tilde{N}(dt,dz):=N(dt,dz)-\nu(dz)dt$ denoting its compensated counterpart.

Next, we turn to the $\rS$-valued regime component $\Lambda_t$ and suppose that $\set{\Lambda_t}_{[0,T]}$ is a continuous-time discrete Markov chain such that, given the time $t\in[0,T]$ and the spatial component position $X_t=x\in\R^d$, $\Lambda_t$ has the transition rate matrix $Q(t,x)=(Q_{ij}(t,x))_{i,j\in\rS}$.
Intuitively, we wish to impose a regime-switching mechanism such that for every $i,j\in\rS$, as $h\searrow0$,
\[
\mathbb{P}(\Lambda_{t+h}= j | (X_t,\Lambda_t) = (x,i)) = \delta_{ij}+ Q_{ij}(t,x)h+o(h),
\]
where $\delta_{ij}$ is the Kronecker delta.
To this end, following a standard construction (e.g. \cite{xi2017feller,kunwai2020feller}), for every $(t,x)\in[0,T]\times\R^d$, we choose and fix $\set{\Delta_{ij}(t,x)}_{i,j \in \rS}$, which is a collection of subsets of $\R_+$ such that $\Delta_{ii}(t,x)=\emptyset$ for every $i\in\rS$, and for all $i\neq j$, $\Delta_{ij}(t,x)$'s are disjoint intervals and the length of every $\Delta_{ij}(t,x)$ is equal to $Q_{ij}(t,x)$. Further, define \[\alpha(t,x,i,w):=\sum_{j \in \rS} (j-i)\one_{\Delta_{ij}(t,x)}(w)\textup{ for }(t,x,i,w)\in[0,T]\times\R^d\times\rS\times\R_+.\]Then,  the desired regime-switching process $\set{\Lambda_t}_{[0,T]}$ can be characterized as the solution to 
\begin{equation}\label{Integral with alpha, N1}
d\Lambda_t=\int_{\R_+}\alpha(t,X_{t-},\Lambda_{t-},w)N_1(dt,dw),
\end{equation}
where $N_1(dt,dw)$ is a Poisson random measure on $\R_+$ with intensity measure $\leb(dw)$ being the Lebesgue measure. Note that the stochastic integral in \eqref{Integral with alpha, N1} is well defined because $w\mapsto\alpha(t,x,i,w)$ is compactly supported for every $(t,x,i)\in[0,T]\times\R^d\times\rS$.

Finally, we also incorporate \textit{hybrid jumps} \cite{blom2024feller} in our regime-switching mechanism. Namely, given a set of measurable functions $\psi:=\set{\psi_{ij}:[0,T]\times\R^d\rightarrow\R^d}_{i,j\in\rS}$, we assume that, upon switching from regime $i$ to regime $j$ at time $t$, the spatial component of the process jumps from $X_{t-}$ to a new position given by $\psi_{ij}(t,X_{t-})$. Consequently, the SDE \eqref{SDE for Xt without hybrid jumps} will have an additional term $\int_{\R_+}\beta(t,X_{t-},\Lambda_{t-},w)N_1(dt,dw)$ 
where \[\beta(t,x,i,w):=\sum_{j\in\rS} (\psi_{ij}(t,x)-x)\one_{\Delta_{ij}(t,x)}(w)\textup{ for }(t,x,i,w)\in[0,T]\times\R^d\times\rS\times\R_+.\] 

Putting everything together, we obtain the SDE formulation of the regime-switching jump diffusion $\set{(X_t,\Lambda_t)}_{[0,T]}$ as:
\begin{equation}\label{Reference SDE general theory}
\left\{
    \begin{aligned}
        dX_t&=b(t,X_{t},\Lambda_t)dt+\sigma(t,X_{t},\Lambda_t)dB_t + \int_{|z|\leq 1}\gamma(t,X_{t-},\Lambda_{t-} ,z)\tilde{N}(dt,dz)\\
        &\hspace{1cm} +\int_{|z|>1}\gamma(t,X_{t-},\Lambda_{t-} ,z)N(dt,dz) + \int_{\R_+}\beta(t,X_{t-},\Lambda_{t-},w)N_1(dt,dw), \\
    d\Lambda_t &= \int_{\R_+}\alpha(t,X_{t-},\Lambda_{t-},w)N_1(dt,dw).     
    \end{aligned}
\right.
\end{equation}
To ensure the existence of all the  stochastic integrals, we impose the following conditions:
\paragraph{\textbf{Assumption (C)}}\customlabel{Assumptions C}{\textbf{(C)}}
\begin{enumerate}
    \item 
    $b=(b_m)_{1\leq m\leq d}:[0,T]\times\R^d \times \rS \to \R^d$ is measurable and locally bounded.
    \item 
    $\sigma=(\sigma_{mn})_{1\leq m,n\leq  d} : [0,T]\times\R^d\times \rS \to\R^{d\times d}$ is measurable and locally bounded, and $(\sigma\sigma^\top)(t,x,i):=a(t,x,i)$ is non-negative definite for every $(t,x,i)\in[0,T]\times\R^d\times\rS$.
    \item 
    $\gamma=(\gamma_m)_{1\leq m\leq d} :[0,T]\times\R^d\times \rS\times\R^\ell \to \R^d$ is measurable, and as a function in $(t,x,i)\in[0,T]\times\R^d\times\rS$,  $\gamma(t,x,i,0)\equiv 0$ and $\sup_{z\in \mathbb{R}^\ell}\frac{|\gamma(t,x,i,z)|}{1\wedge|z|}$ is locally bounded.
    \item The intensity $\nu(dz)$ is a L\'evy measure, i.e., $\nu(\set{0})=0$ and  $\int_{\R^\ell} (1 \wedge |z|^2) \nu(dz) <\infty$.
    \item For every $i,j\in\rS$, $\psi_{ij}:[0,T]\times\R^d\rightarrow\R^d$ is measurable and locally bounded, and $\psi_{ii}(t,x)=x$ for every $(t,x)\in[0,T]\times\R^d$.
    \item For every $i,j\in\rS$ and $i\neq j$, $Q_{ij}:[0,T]\times\R^d\rightarrow\R_+$ is measurable and locally bounded. 
\end{enumerate}

\noindent While Assumption \ref{Assumptions C} will be imposed throughout the work, they are mild conditions on the coefficients only to allow the SDE \eqref{Reference SDE general theory} to be well defined (see, e.g., \cite[Section 4.2.2]{applebaum2009levy}). To conduct analytic work on the SDE and obtain any meaningful result, we will need various additional assumptions, which will become clear in later sections. \\

For the SBP studied in this work, we consider a reference measure $\bR\in \cP(\Omega)$ whose canonical process $\set{(X_t,\Lambda_t)}_{[0,T]}$ is a regime-switching jump diffusion in the sense described above, i.e., as a process on $(\Omega,\cF,\set{\cF_t}_{[0,T]},\bR)$, $\set{(X_t,\Lambda_t)}_{[0,T]}$ is a strong Markov process satisfying the SDE \eqref{Reference SDE general theory}. In other words, $\bR$ is a strong Markov path measure. For every $(t,x,i)\in[0,T]\times\R^d\times\rS$, we denote by $\bR^{t,x,i}$ the conditional distribution of $\bR$ conditioning on $(X_t,\Lambda_t)=(x,i)$. We refer to $\bR_0$ and $\bR_T$ as the \textit{initial data} and the \textit{terminal data} respectively, and write $\bR_{0T}$ for the joint distribution of $
\bR$ at the endpoints $t=0$ and $t=T$.  

\subsection{Static Aspect of the SBP}\label{subsection:static view of bP^}
In the previous part we introduced the SBP in the regime-switching jump-diffusion setting. In order to conduct a proper study of this problem, we require some basic conditions to guarantee that both the SBP \eqref{Definition:DynamicSBP} and the SDE \eqref{Reference SDE general theory} are ``solvable'' with solutions in desired forms. To this end, we adopt the following general assumptions for the rest of the work.

\paragraph{\textbf{Assumption (G)}}\customlabel{Assumptions A}{\textbf{(G)}}
\begin{enumerate}[label=(\textbf{G\arabic*})]
\item For every $(t,x,i)\in[0,T]\times\R^d\times\rS$, the SDE \eqref{Reference SDE general theory} admits a solution\footnote[2]{In this work only the notion of weak solution to SDE is concerned.} $\bP^{t,x,i}\in\cP(\Omega)$ where the canonical process $\set{(X_r,\Lambda_r)}_{[0,T]}$ under $\bP^{t,x,i}$ satisfies \eqref{Reference SDE general theory} for $r\in[t,T]$ with $(X_t,\Lambda_t)\equiv(x,i)$. Moreover, the family $\set{\bP^{t,x,i}}_{[0,T]\times\R^d\times\rS}$ is strong Markovian in the sense that for every stopping time $\tau\in[t,T]$, the conditional distribution of $\bP^{t,x,i}$ conditioning on $\cF_\tau$ is given by $\bP^{\tau,X_\tau,\Lambda_\tau}$.\\ \label{Assumption: SDE} 
\item In the SBP \eqref{Definition:DynamicSBP}, the reference measure $\bR$ is the solution to the SDE \eqref{Reference SDE general theory} with initial data $\bR_0$ (the existence of such an $\bR$ is guaranteed by \ref{Assumption: SDE}), and further, $\bR_{0T}\ll\bR_0\otimes\bR_T$. In addition, the target distributions $\rho_0,\rho_T$ are such that $\KL{\rho_0\otimes\rho_T}{\bR_{0T}}<\infty$.
\label{asm: A2 - leads to fg existence}\\
\end{enumerate}

It is known in the literature (e.g., \cite{leonard2014survey}) that under Assumption \textup{\ref{Assumptions A}}, the SBP \eqref{Definition:DynamicSBP} admits a unique solution $\widehat\bP\in\cP(\Omega)$ with $\KL{\widehat\bP}{\bR}<\infty$. Further, this Schr\"odinger bridge takes the form
\begin{equation}\label{eq:bP^ in terms of fg}
    \widehat\bP=\f(X_0,\Lambda_0)\g(X_T,\Lambda_T)\bR 
    \end{equation} 
    where $\f,\g :\R^d\times\rS \to \R_+$ are non-negative measurable functions solving the system below:
    \begin{equation}\label{Equation: fg Schrodinger System with lambda}
        \begin{cases}
        \vspace{0.2cm}\displaystyle \f(x,i)\Esub{\bR}{\g(X_T,\Lambda_T)|(X_0,\Lambda_0)=(x,i)} = \frac{d\rho_0}{d\bR_0}(x,i)& \text{ for $\bR_0$-a.e. $(x,i)$},\\
        \displaystyle \g(y,j)\Esub{\bR}{\f(X_0,\Lambda_0)|(X_T,\Lambda_T)=(y,j)} = \frac{d\rho_T}{d\bR_T}(y,j)& \text{ for $\bR_T$-a.e. $(y,j)$}.
        \end{cases}
    \end{equation} 

We refer to \eqref{Equation: fg Schrodinger System with lambda} as the \textit{static Schr\"odinger system} as $\f,\g$ are entirely determined by the joint endpoint distribution $\bR_{0T}$ and the target distributions $\rho_0,\rho_T$ \cite[Theorem 2.14]{leonard2014reciprocal}, without invoking the dynamics of the whole path $\set{(X_t,\Lambda_t)}_{[0,T]}$. Following a similar viewpoint, although $\widehat\bP$ is a path measure, the formulation \eqref{eq:bP^ in terms of fg} of $\widehat\bP$ only reflects the static aspect of this Schr\"odinger bridge. Therefore, our main focus in this work is to develop an accurate and comprehensive ``dynamic picture'' for $\widehat\bP$.

\subsection{Main Results}\label{subsection:main results} Below we give an overview of the main results obtained in this work. First, we examine various properties of the Schr\"odinger bridge $\widehat\bP$ in the general regime-switching jump-diffusion setting. Then, we put the general theory into practice by applying it to a concrete example in the context of the \textit{unbalanced SBP}.  Our results generally extend the existing literature on the diffusion (and jump-diffusion) SBP to the regime-switch setting. Our unbalanced SBP example is also a novel model in its own realm.

Throughout the work, Assumptions \ref{Assumptions C} and \ref{Assumptions A} will be in effect, as they provide the foundation of our work. Depending on the specific aspect of $\widehat\bP$ we study, we may invoke additional assumptions. In \cref{subsection: general background}, we will discuss the generality and the feasibility of our assumptions.

\subsection*{Schr\"odinger bridge SDE and generator}
\cref{Section: General Theory} is dedicated to an examination of $\widehat\bP$ from a stochastic analysis perspective, where a central piece that underlies all the results is the measurable and non-negative function 
\begin{equation}\label{Definition: general varphi}
    \varphi(t,x,i):=\bE_{\bR}\brac{\g(X_T,\Lambda_T)|(X_t,\Lambda_t)=(x,i)}\textup{ for }(t,x,i)\in[0,T]\times\R^d\times\rS,
\end{equation}
where $\bR$, $\g$, and $\set{(X_t,\Lambda_t)}_{[0,T]}$ are the same as in the previous subsection. By invoking Assumption \ref{Assumption: general harmonic varphi} (the \textit{harmonic} assumption), which endows $\varphi$ with desirable properties, we establish the SDE formulation of $\widehat\bP$. In particular, we discover in \textbf{\cref{Theorem: General P-hat}} that the canonical process under $\widehat\bP$ is again a regime-switching jump diffusion, satisfying an SDE in the same form as \eqref{Reference SDE general theory} but with new coefficients in terms of $\varphi$, from where we also obtain the explicit expression of the infinitesimal generator associated with $\widehat\bP$. Our main methodology is to adapt stochastic calculus tools for jump diffusions to the regime-switching model. In fact, we establish that $\widehat\bP$ can be viewed as a Girsanov transform from (a modification of) the reference measure $\bR$, where the ``change of measure'' can be made explicit in terms of $\varphi$, and it affects not only the drift and the jump component of the original SDE \eqref{Reference SDE general theory}, but also its regime-switching mechanism.

\subsection*{Stochastic control formulation of SBP}
In \cref{Subsection: stochastic control}, we take the aforementioned Girsanov transform approach further and develop a stochastic control formulation of our SBP. Upon strengthening Assumption \ref{Assumption: general harmonic varphi}, we prove in \textbf{\cref{Theorem : optimal controls}} that the specific Girsanov transform which yields $\widehat\bP$ can in fact be attained by solving a properly crafted \textit{Stochastic Control Problem}. This confirms that the rich connection between the SBP and the field of stochastic control extends to the regime-switching setting.

\subsection*{Schr\"odinger bridge PIDE and dynamics}
In \cref{Section: densities}, we adopt an analytic approach and study the dynamics of $\widehat\bP$ under Assumption \ref{Assumptions D}, which requires the reference measure $\bR$ to admit a transition density function. In such a setting, we show in \textbf{\cref{Theorem: phi*phi-hat product density}} the existence of a transition density function, as well as marginal density functions at all time, under the Schr\"odinger bridge $\widehat\bP$; all these density functions can be explicitly expressed in terms of the \textit{Schr\"odinger potentials} $(\varphi,\widehat\varphi)$, where $\varphi$ is as in \eqref{Definition: general varphi}, and $\widehat\varphi$ is the measurable and non-negative function given by \[
\widehat\varphi(s,y,j):=\bE_{\bR}[\f(X_0,\Lambda_0);(X_s,\Lambda_s)=(y,j)]\;\textup{ for }(s,y,j)\in[0,T]\times\R^d\times\rS.
\]
Moreover, upon imposing stronger regularity conditions on the transition density function (Assumption \ref{Assumptions D^}), we present in \textbf{\cref{Lemma: cL*p=0 for rsjd}} $\varphi$ and $\widehat\varphi$ as the respective solution to a pair of partial integro-differential equations (PIDEs), which in turn gives rise to the \textit{dynamic Schr\"odinger system} for our SBP in terms of $(\varphi,\widehat\varphi)$, as stated in \textbf{\cref{Theorem : dynamic schrodinger system - general}}. Finally, in \textbf{\cref{Theorem: forward equation for Phat density - general}} we derive the Kolmogorov forward equation, in the form of a PIDE, associated with $\widehat\bP$, which complements the backward generator of $\widehat\bP$ found in \cref{Subsection: P-hat SDE and generator}.

\subsection*{The Fortet-Sinkhorn algorithm}
Also under the presence of the transition density function, in \cref{Subsection: Algorithm} we formulate the \textit{Fortet-Sinkhorn algorithm} in the regime-switch setting. This celebrated algorithm enables one to solve the Schr\"odinger system iteratively in a computationally efficient manner \cite{chen2016entropic} to obtain $(\varphi,\widehat\varphi )$ and thus to determine the Schr\"odinger bridge $\widehat\bP$. In \textbf{\cref{Theorem: algorithm convergence - General}}, we prove that the multi-regime version of the algorithm converges to a unique solution under conditions analogous to those in the single-regime case.

\subsection*{The unbalanced SBP}
In \cref{Section: uSBP example}, we apply the general theory from the previous sections to study an \textit{unbalanced Schr\"odinger bridge problem} (uSBP) originated from a jump diffusion with killing. It is natural to fit such a process into our regime-switching model with two regimes only - ``active'' and ``dead'', where the regime-switching mechanism is reduced to a simple form governed by the killing rate. We set up the uSBP in a novel way that, at the terminal time, not only the position of ``active'' particles but also the killing position of ``dead'' particles are required to match the target distribution. By adapting the existing regularity theory of jump diffusions to the case with killing, we can configure the uSBP model so that all the assumptions invoked in the general theory become accessible, in the sense that they are either directly fulfilled or verifiable by the target distribution. Moreover, most of the general results from the previous sections, including the stochastic control formulation, the transition (and marginal) density function of $\widehat\bP$, the dynamic Schr\"odinger system of $(\varphi,\widehat\varphi)$, the Schr\"odinger bridge forward PIDE, etc., will take explicit forms in the uSBP setting. Some possible variations of this uSBP will also be discussed.

\subsection{Motivation, Background and Assumptions}\label{subsection: general background}
Regime-switching (jump) diffusions form a rich class of stochastic processes that capture a broad range of applications in operations management \cite{sethi2005average}, biology \cite{luo2007stochastic, tuong2019extinction}, financial modeling \cite{ramponi2012fourier, zhang2001stock}, actuarial science \cite{hu2025risk}, etc. In contrast to single-regime models, regime-switching jump diffusions provide a framework for modeling discrete events that affect the ``environment'' in which the stochastic process is evolving. As a result, multi-regime models enable a more realistic representation of the dynamical system. In the last thirty years, regime-switching diffusions and jump diffusions have seen increasing interest, especially for optimization and control problems. In fact, the SBP belongs to both of these categories: it is an optimization problem in its primal formulation and a control problem in its stochastic control formulation. The SBP is thus highly relevant for both modeling and control purposes in regime-switching applications. 

The SBP has a rich history going back to E. Schr\"odinger's original thought experiment in the 1930s \cite{schrodinger1931uber,schrodinger1932theorie} and has generated a large body of research (e.g. \cite{beurling1960automorphism,fortet1940resolution,jamison1975markov,pavon1991free,daipra1991stochastic,chen2016entropic}), especially in the diffusion case. As regime-switching models become more and more widely adopted, we believe it is necessary to expand existing knowledge on the SBP in this direction. On the other hand, introducing regime switching to a (jump) diffusion adds yet another layer of complexity that imposes new technical challenges. The purpose of this work is twofold: first, we aim to establish fundamental theoretical results for the SBP for regime-switching (jump) diffusions under assumptions that are analogous to the single-regime SBP. Second, we show how an unbalanced SBP can be modeled via a regime-switching approach, and we provide a detailed analysis that establishes new results beyond the existing literature on this topic.\\

Despite the general literature being scarce compared to that of diffusions, the field of jump diffusions, with and without regime switching, is fast developing and the research body is expanding in both the depth and the diversity. With this in mind, instead of restricting ourselves to a rigid set-up, we put the basic requirements only (Assumption \ref{Assumptions C}) on the model under consideration so that our work can potentially reach a broad class of regime-switching jump diffusions. On the other hand, when exploring certain aspects of the SBP, in order to establish the desired results, we will invoke assumptions in terms of the specific properties required of our model, and these properties may be fulfilled in several ways, as we will briefly explain below. 

To start, Assumption \ref{Assumption: SDE} calls for the existence and uniqueness of the solution to the SDE \eqref{Reference SDE general theory}. The solution theory for general jump diffusions is well studied; in particular, it is known that \eqref{Reference SDE general theory} admits a unique (\textit{strong}) solution under Lipschitz-type conditions on the coefficients (see, e.g., \cite[Chapter 6.2]{applebaum2009levy}). Specifically in the regime-switching setting, results on the existence and uniqueness of solutions can be found in \cite{blom2024feller,kunwai2020feller,kunwai2021regimeswitching,zhang2020regimeswitching,xi2017feller,yin2009hybrid}, among others. In fact, for results in \cref{subsection:preliminaries} and discussions in \cref{Section: densities}, \ref{Assumption: SDE} can be relaxed to the \textit{wellposedness of the martingale problem} associated with the SDE \eqref{Reference SDE general theory}\footnote[2]{Unlike for diffusions, it is not yet generally established for jump diffusions that the wellposedness of the martingale problem is equivalent to the exisitence and uniqueness of solution to the corresponding SDE; some results in this direction are known in the case of time-homogeneous coefficients \cite{lepeltier1976probleme, kurtz2011equivalence}.}, which is known to hold when the coefficients are bounded and continuous in both time and space (e.g., \cite{stroock1997multidimensional,komatsu1973markov}).

As for Assumption \ref{asm: A2 - leads to fg existence}, we rely on it, together with the Markov property of the SDE solution, to obtain the Schr\"odinger bridge $\widehat\bP$ in the desired static form \eqref{eq:bP^ in terms of fg}, which serves as the starting point of our investigation. However, this is not the only way to achieve \eqref{eq:bP^ in terms of fg}; $\widehat\bP$ is known to hold such a representation under different combinations of conditions. For a detailed discussion, we refer readers to \cite[Section 2]{leonard2014survey}.

In \cref{Section: General Theory}, in order to develop a stochastic analysis approach to the SBP, we will impose Assumption \ref{Assumption: general harmonic varphi} (and its strengthening \ref{asm : log varphi C12b}), which requires the function $\varphi$ defined in \eqref{Definition: general varphi} to possess sufficient regularity (in the temporal and the spatial variables) and to be \textit{harmonic} in the sense that $\varphi$ satisfies the Kolmogorov backward equation corresponding to the SDE \eqref{Reference SDE general theory}. This is the case for a wide range of diffusions \cite{jamison1975markov,daipra1991stochastic}. As for jump diffusions, while the regularity theory is still developing, there have been many results on the Kolmogorov backward equation in recent years (\cite{zlotchevski2024schrodinger,chen2016heata,chen2016heat,chen2018heat}).

A common scenario in which the regularity and the harmonic property of $\varphi$ can be achieved is when the reference measure $\bR$ admits a transition density function that is sufficiently regular and the function $\g$ is bounded; this is exactly the motivation of Assumption \ref{Assumptions D} and its strengthening \ref{Assumptions D^}), which are the main assumptions in \cref{Section: densities}. In the current literature, there are results on the existence and the regularity of the transition density function for single-regime jump diffusions (e.g. \cite{kunita2019stochastic, chen2018heat, yin2009hybrid}). In the case of \textit{time-homogeneous} jump diffusions, the method in \cite[Section 5]{xi2009asymptotica} offers a way to derive the multi-regime transition density function in the case when every ``fixed regime'' jump diffusion (i.e., the SDE \eqref{Reference SDE general theory} without the regime-switching component and with the remaining coefficients restricted to a fixed regime) admits a transition density function. In addition, when the regime-switching mechanism is simple (e.g., the uSBP model considered in \cref{Section: uSBP example}), it is also possible to extend the results on the transition density function of a single-regime jump diffusion to the multi-regime setting.

Finally, the uSBP treated in \cref{Section: uSBP example} is a demonstration that all the aforementioned assumptions can be fulfilled when the coefficients of the SDE \eqref{Reference SDE general theory} are sufficiently regular and the target distributions $\rho_0,\rho_T$ are properly chosen, for which we adopt the framework in \cite[Section 4-6]{kunita2019stochastic}. More generally, whenever the regularity theory of a diffusion or jump diffusion is available, we can apply the same method to study the uSBP arising from the process with killing.

\subsection{Notation and Terminology}
For a function $(t,x,i)\in[0,T]\times\R^d\times\rS\mapsto f(t,x,i)\in\R$, we denote by $\nabla f,\nabla^2 f$ the gradient and the Hessian respectively of $f$ with respect to the spatial variable $x$ only. 

We say that a function $f$ on $[0,T]\times\R^d\times\rS$ is \textit{of class $C^{1,2}$} if, for every $i\in\rS$, $f(\cdot,i)\in C([0,T]\times\R^d)$, and as a function on $(0,T)\times\R^d$, $f(\cdot,i)$ is once differentiable in the temporal variable and twice differentiable in the spatial variable with $\frac{\partial}{\partial t}f(\cdot,i),\nabla f(\cdot,i),\nabla^2 f(\cdot,i)\in C((0,T)\times\R^d)$. If, in addition, $f(\cdot,i)$ is supported on a compact subset of $(0,T)\times\R^d$, then we say that $f$ is \textit{of class $C^{1,2}_c$}. 

We say that $f$ is \textit{of class $C_b^{1,2}$}, if $f$ is of class $C^{1,2}$, and for every $i\in\rS$, as functions on $(0,T)\times\R^d$, $\frac{\partial}{\partial t}f(\cdot,i), \nabla f(\cdot,i),\nabla^2 f(\cdot,i)$ are bounded in the spatial variable, with the bounds being uniform in the temporal variable over any compact subset of $(0,T)$. 

Other function classes appearing in the paper (e.g. $C^{1,1}$, $C^{1,\infty}_b$) are similarly defined. 

\section{The Schr\"odinger Bridge}\label{Section: General Theory}
As stated in \cref{subsection:main results}, throughout this work, Assumptions \ref{Assumptions C} and \ref{Assumptions A} are in effect for the SDE \eqref{Reference SDE general theory} and the SBP \eqref{Definition:DynamicSBP}. Given a path measure $\bR$ whose canonical process $\set{(X_t,\Lambda_t)}_{[0,T]}$ is a strong Markov process satisfying \eqref{Reference SDE general theory}, and two target distributions $\rho_0,\rho_T$, recall from \cref{subsection:static view of bP^} that the unique solution to \eqref{Definition:DynamicSBP}, also called the Schr\"odinger bridge, is given by $\widehat\bP=\f(X_0,\Lambda_0)\g(X_T,\Lambda_T)\bR$, where $\f,\g$ are a solution to the static Schr\"odinger system \eqref{Equation: fg Schrodinger System with lambda}. We will first carry out some preliminary analysis on $\widehat\bP$ based on this static viewpoint. 

\subsection{Preliminaries of the Schr\"odinger Bridge}\label{subsection:preliminaries}
As before, we denote by $\bR_0$ and $\bR_T$ the initial data and the terminal data respectively of $\bR$. Instead of the original path measure $\bR$, it is often convenient to work with the following modification: 
\begin{equation*}
   \bP^0 := \frac{d\rho_0}{d\bR_0}(X_0,\Lambda_0)\bR. 
\end{equation*}
It is clear that $\bP^0\in\cP(\Omega)$ is again a strong Markov path measure which only differs from $\bR$ at the initial data. In other words, the canonical process $\set{(X_t,\Lambda_t)}_{[0,T]}$ under $\bP^0$ remains as a strong Markov process satisfying the SDE \eqref{Reference SDE general theory}, but with the desired initial distribution $\rho_0$ instead of $\bR_0$. 

Suppose that the transition distributions under $\bP^0$ (same as under $\bR$) are
\[\set{P_{ij}(t,x,s,dy):i,j\in\rS,\,0\leq t<s\leq T,\,(\rho_0\textup{-a.e., if $t=0$) }x\in\R^d}.\] 
Then, this family of distributions possesses the \textit{semi-group property} in the sense that, for every $0\leq t<r<s\leq T$, ($\rho_0$-a.e. if $t=0$) $x\in\R^d$, $i,j\in\rS$ and Borel $B\subseteq\R^d$,
\begin{equation}\label{eq:C-K eq for p_ij}
    P_{ij}(t,x,s,B)=\sum_{\iota\in\rS}\int_{\R^d}P_{\iota j}(r,w,s,B)P_{i\iota}(t,x,r,dw).
\end{equation}

Let $\varphi$ be defined as in \eqref{Definition: general varphi}. By \eqref{Equation: fg Schrodinger System with lambda}, for $\bR_T$-a.e. $(y,j)\in\R^d\times\rS$ and $\bR_0$-a.e.  $(x,i)\in\R^d\times\rS$, $\varphi(T,y,j)=\g(y,j)$ and \begin{equation*}
\f(x,i)\varphi(0,x,i)=\f(x,i)\bE_{\bR}\brac{\g(X_T,\Lambda_T)|(X_0,\Lambda_0)=(x,i)}=\frac{d\rho_0}{d\bR_0}(x,i). 
\end{equation*} 
From here it follows that $\varphi(0,x,i)>0$ for $\rho_0$-a.e. $x\in\R^d$ (or equivalently,  $\varphi(0,X_0,\Lambda_0)>0$ $\bP^0$-a.e.), and hence $\widehat\bP$ can be written as 
\begin{equation}\label{equation:bP^ as h-transf of bP^0}
    \widehat\bP=\frac{\varphi(T,X_T,\Lambda_T)}{\varphi(0,X_0,\Lambda_0)}\bP^0.
\end{equation}
Therefore, we also have $\varphi(T,X_T,\Lambda_T)=\g(X_T,\Lambda_T)<\infty$ $\bP^0$-a.e. 

Based on the above expression of $\widehat\bP$, we make the following observation.
\begin{lem}\label{lem: transition distribution under bP^}
The Schr\"odinger bridge  $\widehat\bP$ is a strong Markov path measure with the transition distributions
\begin{equation}\label{eq: transition distribution under P^}
    \widehat P_{ij}(t,x,s,dy):=\frac{\varphi(s,y,j)}{\varphi(t,x,i)}P_{ij}(t,x,s,dy)\;\textit{if }\varphi(t,x,i)>0,\;\textit{trivial  otherwise},
\end{equation}
for every $i,j\in\rS$, $0\leq t<s\leq T$ and every ($\rho_0$-a.e. if $t=0$) $x\in\R^d$.
\end{lem}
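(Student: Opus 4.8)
The plan is to recognize \eqref{equation:bP^ as h-transf of bP^0} as a Doob $h$-transform of the strong Markov path measure $\bP^0$ by the space--time harmonic function $\varphi$, and then to read off the transition kernels via the abstract Bayes rule. \emph{First, I would show that $\set{\varphi(t,X_t,\Lambda_t)}_{[0,T]}$ is a nonnegative $\bP^0$-martingale.} Since $\widehat\bP$ and $\bP^0$ are both probability measures, \eqref{equation:bP^ as h-transf of bP^0} exhibits $M:=\varphi(T,X_T,\Lambda_T)/\varphi(0,X_0,\Lambda_0)$ as a nonnegative random variable with $\bE_{\bP^0}[M]=1$; put $M_t:=\bE_{\bP^0}[M\mid\cF_t]$, a uniformly integrable nonnegative $\bP^0$-martingale. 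The key point is that $M_t=\varphi(t,X_t,\Lambda_t)/\varphi(0,X_0,\Lambda_0)$ $\bP^0$-a.s.\ for every $t$: indeed $\bP^0$ and $\bR$ share the transition kernels $\set{P_{ij}}$, the static system \eqref{Equation: fg Schrodinger System with lambda} gives $\varphi(T,\cdot)=\g$ $\bR_T$-a.e.\ and hence $\bP^0_T$-a.e.\ (because $\rho_0\ll\bR_0$ forces $\bP^0_T\ll\bR_T$), and $\varphi(0,X_0,\Lambda_0)$ is $\cF_0$-measurable, so the Markov property of $\bP^0$ yields $\bE_{\bP^0}[\varphi(T,X_T,\Lambda_T)\mid\cF_t]=\bE_{\bP^0}[\g(X_T,\Lambda_T)\mid(X_t,\Lambda_t)]=\bE_{\bR}[\g(X_T,\Lambda_T)\mid(X_t,\Lambda_t)]=\varphi(t,X_t,\Lambda_t)$. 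In particular $\varphi(t,X_t,\Lambda_t)\in(0,\infty)$ $\bP^0$-a.s.\ (recall $\varphi(0,X_0,\Lambda_0)>0$ $\bP^0$-a.s.), and, spelling out the Markov property, $\varphi$ is \emph{space--time harmonic}: $\varphi(t,x,i)=\sum_{j\in\rS}\int_{\R^d}\varphi(s,y,j)\,P_{ij}(t,x,s,dy)$ for all $0\le t<s\le T$ (for $\rho_0$-a.e.\ $x$ if $t=0$).

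\emph{Next, change of measure and Bayes.} By optional stopping, $\widehat\bP\big|_{\cF_\tau}=M_\tau\cdot\bP^0\big|_{\cF_\tau}$ for every bounded $\set{\cF_t}$-stopping time $\tau$, and $\widehat\bP\big(\varphi(\tau,X_\tau,\Lambda_\tau)>0\big)=1$. Fix such a $\tau$, a time $s\ge\tau$, and a bounded measurable $h$ on $\R^d\times\rS$; the abstract Bayes rule, together with the $\cF_0\subseteq\cF_\tau$-measurability of $\varphi(0,X_0,\Lambda_0)$, gives $\widehat\bP$-a.s.
\[
\bE_{\widehat\bP}\!\big[h(X_s,\Lambda_s)\mid\cF_\tau\big]
=\frac{\bE_{\bP^0}[M_s\,h(X_s,\Lambda_s)\mid\cF_\tau]}{M_\tau}
=\frac{1}{\varphi(\tau,X_\tau,\Lambda_\tau)}\,\bE_{\bP^0}\!\big[\varphi(s,X_s,\Lambda_s)\,h(X_s,\Lambda_s)\mid\cF_\tau\big].
\]
Applying the strong Markov property of $\bP^0$ (Assumption \ref{Assumption: SDE}) to the last conditional expectation, on $\set{(X_\tau,\Lambda_\tau)=(x,i)}$ this equals $\varphi(\tau,x,i)^{-1}\sum_{j\in\rS}\int_{\R^d}\varphi(s,y,j)h(y,j)\,P_{ij}(\tau,x,s,dy)=\sum_{j\in\rS}\int_{\R^d}h(y,j)\,\widehat P_{ij}(\tau,x,s,dy)$. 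Taking $\tau$ deterministic shows that $\widehat\bP$ is Markov with the transition kernels \eqref{eq: transition distribution under P^}; a general bounded $\tau$ upgrades this to the strong Markov property in the customary way (passing from one-time to finite-dimensional conditionals).

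\emph{Finally, check that $\widehat P$ is a bona fide inhomogeneous transition function.} Nonnegativity and measurability in $x$ are immediate from \eqref{eq: transition distribution under P^}; that $\sum_{j\in\rS}\widehat P_{ij}(t,x,s,\R^d)=1$ and that $\set{\widehat P_{ij}}$ satisfies Chapman--Kolmogorov both follow by inserting \eqref{eq: transition distribution under P^} into, respectively, the harmonicity identity $\sum_{j}\int\varphi(s,y,j)P_{ij}(t,x,s,dy)=\varphi(t,x,i)$ and the Chapman--Kolmogorov relation \eqref{eq:C-K eq for p_ij}, and telescoping the $\varphi$-ratios using the space--time harmonicity of $\varphi$ at the intermediate time.

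I do not expect a deep difficulty here — this is the regime-switching incarnation of the classical $h$-transform — but the bookkeeping needs attention, and the crux is the first step: identifying the martingale $M_t$ with $\varphi(t,X_t,\Lambda_t)/\varphi(0,X_0,\Lambda_0)$, which is where the identity $\varphi(T,\cdot)=\g$, the shared-kernel property of $\bP^0$ and $\bR$, and the absolute continuity $\rho_0\ll\bR_0$ (needed to transport $\varphi(T,\cdot)=\g$ from a $\bR_T$-null statement to a $\bP^0_T$-null statement) all come into play. One must also track the ``$\rho_0$-a.e.\ at $t=0$'' qualifier throughout and confirm that $\varphi$ is $(0,\infty)$-valued along $\widehat\bP$-typical paths, which is automatic once $M_t$ is known to be a nonnegative mean-one martingale.
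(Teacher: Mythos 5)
Your proposal is correct in substance but takes a genuinely different route from the paper. The paper proves the lemma by a direct computation of finite-dimensional distributions: it writes $\widehat\bP\big(\bigcap_k\{X_{t_k}\in B_k,\Lambda_{t_k}=i_k\}\big)$ as an iterated integral against the kernels $P_{ij}$, telescopes the ratio $\varphi(T,x_K,i_K)/\varphi(0,x_0,i_0)$ into one-step ratios, and disposes of the sets $\{\varphi(t_k,\cdot,i_k)=0\}$ by an explicit estimate showing they carry no mass; the restriction relation of \cref{cor:relation between bP^ and bP from t to s} is then read off as a corollary. You proceed in the opposite order: you first identify the density martingale $M_t=\varphi(t,X_t,\Lambda_t)/\varphi(0,X_0,\Lambda_0)$ (essentially that corollary, extended to stopping times by optional sampling) and then obtain the kernels from the abstract Bayes rule combined with the strong Markov property of $\bP^0$. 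Your route handles stopping times natively, so the strong Markov property of $\widehat\bP$ comes out more explicitly than in the paper, where it is simply asserted to be inherited from $\bP^0$; the paper's route is more elementary (no martingale/Bayes machinery) and makes the treatment of the zero set of $\varphi$ completely explicit, which is exactly the delicate bookkeeping here. Both arguments hinge on the same harmonicity identity $\sum_{j}\int\varphi(s,y,j)P_{ij}(t,x,s,dy)=\varphi(t,x,i)$.

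Three small corrections to your write-up, none fatal. First, the identification $M_t\,\varphi(0,X_0,\Lambda_0)=\bE_{\bP^0}[\g(X_T,\Lambda_T)\mid\cF_t]$ should be read with generalized conditional expectations of nonnegative variables, since $\g(X_T,\Lambda_T)$ need not be $\bP^0$-integrable; and to evaluate $M_\tau=\varphi(\tau,X_\tau,\Lambda_\tau)/\varphi(0,X_0,\Lambda_0)$ at a stopping time you should invoke the strong Markov property of $\bP^0$ (Assumption \ref{Assumption: SDE}) directly rather than the fixed-time identity. Second, your parenthetical claim that $\varphi(t,X_t,\Lambda_t)\in(0,\infty)$ $\bP^0$-a.s.\ is not justified: a nonnegative mean-one martingale can vanish with positive probability (e.g.\ when $\g$ vanishes on part of $\textup{supp}(\bR_T)$), so only finiteness follows; this is harmless because your Bayes step needs only $\varphi(\tau,X_\tau,\Lambda_\tau)>0$ $\widehat\bP$-a.s., which you correctly derive, and indeed the ``trivial otherwise'' clause in \eqref{eq: transition distribution under P^} and the paper's zero-set estimate exist precisely because $\bP^0$-a.s.\ positivity can fail. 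Third, in your Chapman--Kolmogorov check the telescoping also requires the one-line observation that $\widehat P_{i\iota}(t,x,r,\cdot)$ assigns no mass to $\{w:\varphi(r,w,\iota)=0\}$, which follows immediately from the defining formula.
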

\begin{proof}
 
We first observe that, according to \eqref{Definition: general varphi}, for every $(x,i)\in\R^d\times\rS$ and $0\leq t<s\leq T$,
\[
\sum_{j\in\rS}\int_{\R^d}\varphi(s,y,j)P_{ij}(t,x,s,dy)=\varphi(t,x,i),
\]
so the expression in \eqref{eq: transition distribution under P^} is always well defined. Next, to establish \eqref{eq: transition distribution under P^}, it is sufficient for us to examine the finite-dimensional marginal distributions of $\widehat\bP$ based on the relation \eqref{equation:bP^ as h-transf of bP^0} between $\widehat\bP$ and $\bP^0$. Then, the strong Markov property of $\widehat\bP$ follows directly from that of $\bP^0$. 

Let  $\set{(X_t,\Lambda_t)}_{[0,T]}$ be the canonical process under $\widehat\bP$. We take any positive integer $K\geq1$, $0=t_0< t_1<\cdots<t_{K-1}<t_K=T$, $i_0,i_1,\cdots,i_{K-1},i_K\in\rS$, and Borel sets $B_0,B_1,\cdots,B_{K-1},B_K\subseteq\R^d$, and write
    \begin{equation*}
    \begin{aligned}
        \widehat\bP&\pran{\bigcap_{k=0}^K\set{X_{t_k}\in B_k,\,\Lambda_{t_k}=i_k}}\\
        &=\bE_{\bP^0}\brac{\frac{\varphi(T,X_T,\Lambda_T)}{\varphi(0,X_0,\Lambda_0)}\one_{\bigcap_{k=0}^K\set{X_{t_k}\in B_k,\,\Lambda_{t_k}=i_k}}}\\
        &=\idotsint_{B_0\times\cdots\times B_K}\frac{\varphi(T,x_K,i_K)}{\varphi(0,x_0,i_0)}\\
        &\hspace{4cm}P_{i_{K-1}\,i_K}(t_{K-1},T,x_{K-1},dx_K)\cdots P_{i_0\,i_1}(0,x_0,t_1,dx_1)\rho_0(dx_0,i_0)\\
        &\overset{\triangle}{=}\idotsint_{\widetilde B_0\times\cdots\times \widetilde B_{K-1}\times B_K}\frac{\varphi(t_1,x_1,i_1)}{\varphi(0,x_0,i_0)}\cdot \frac{\varphi(t_2,x_2,i_2)}{\varphi(t_1,x_1,i_1)}\cdots\frac{\varphi(T,x_K,i_K)}{\varphi(t_{K-1},x_{K-1},i_{K-1})}\\
    &\hspace{4cm}P_{i_{K-1}\,i_K}(t_{K-1},T,x_{K-1},dx_K)\cdots P_{i_0\,i_1}(0,x_0,t_1,dx_1)\rho_0(dx_0,i_0)\\
    &=\idotsint_{B_0\times\cdots\times B_K}\widehat P_{i_{K-1}\,i_K}(t_{K-1},T,x_{K-1},dx_K)\cdots \widehat P_{i_0\,i_1}(0,x_0,t_1,dx_1)\rho_0(dx_0,i_0),
    \end{aligned}
\end{equation*}
where $\widetilde B_k:=B_k\cap\set{x:\varphi(t_k,x,i_k)>0}$ and $\widehat P_{i_{k}\,i_{k+1}}(t_{k},x_{k},t_{k+1},dx_{k+1})$ defined as in \eqref{eq: transition distribution under P^} for $k=0,\cdots,K-1$.

The only thing that remains to be done is to justify the equality ``$\triangle$''.
To this end, we note that, for every $k=0,\cdots,K-1$,
\begin{equation*}
\begin{aligned}
    &\idotsint_{B_{k+1}\times\cdots\times B_K}\varphi(T,x_K,i_K)P_{i_{K-1}\,i_K}(t_{K-1},x_{K-1},T,dx_K)\cdots P_{i_{k}\,i_{k+1}}(t_k,x_k,t_{k+1},dx_{k+1})\\
    &\qquad\leq \sum_{(j_{k+1},\cdots,j_K)\in\rS^{K-k}}\idotsint_{\R^d\times\cdots\times \R^d}\varphi(T,x_K,j_K)\\
    &\hspace{5cm}P_{j_{K-1}\,j_K}(t_{K-1},x_{K-1},T,dx_K)\cdots P_{i_{k}\,j_{k+1}}(t_k,x_k,t_{k+1},dx_{k+1})\\
    &\qquad=\sum_{j_K\in\rS}\int_{\R^d}\varphi(T,x_K,j_K)P_{i_k\,j_K}(t_k,x_k,T,dx_K)\;\textup{ by \eqref{eq:C-K eq for p_ij}}\\
    &\qquad=\varphi(t_k,x_k,i_k)\;\textup{ by \eqref{Equation:phi-definition}},
\end{aligned}
\end{equation*}
and hence
\begin{equation*}
  \begin{aligned} &\idotsint_{B_1\times\cdots\times B_{k-1}\times(B_k\setminus \widetilde B_k)\times B_{k+1}\times\cdots\times B_K}\frac{\varphi(T,x_K,i_K)}{\varphi(0,x_0,i_0)}\\
  &\hspace{4cm}P_{i_{K-1}i_K}(t_{K-1},T,x_{K-1},dx_K)\cdots P_{i_0 i_1}(0,x_0,t_1,dx_1)\rho_0(dx_0,i_0)\\
  &\leq \idotsint_{B_1\times\cdots\times B_{k-1}\times(B_k\setminus \widetilde B_k)}\frac{\varphi(t_k,x_k,i_k)}{\varphi(0,x_0,i_0)}\\
  &\hspace{4.5cm}P_{i_{k-1}\,i_k}(t_{k-1},x_{k-1},t_k,dx_k)\cdots P_{i_0\,i_1}(0,x_0,t_1,dx_1)\rho_0(dx_0,i_0)\\
  &=0\;\textup{ by the fact that }\varphi(t_k,x_k,i_k)=0\textup{ when }x_k\in B_k\setminus\widetilde B_k.
  \end{aligned}  
\end{equation*}
\end{proof}

\noindent\textbf{Remark. }Note that the above lemma on the transition distribution indicates that $\widehat\bP$ can be identified as the \textit{Doob $h$-transform} of $\bP^0$ by the function $\varphi$. We will not expand on this point here, since it is not relevant to the rest of the work, but refer readers to \cite{zlotchevski2024schrodinger,nagasawa1989transformations,chung2005markov} for more details. \\

A consequence of the lemma above is the following refinement of the relation \eqref{equation:bP^ as h-transf of bP^0}.
\begin{cor}\label{cor:relation between bP^ and bP from t to s}
   For every $(t,x,i)\in[0,T]\times\R^d\times\rS$, denote by $\bP^{t,x,i}$ the path measure that is the conditional distribution of $\bP^0$ conditioned on $(X_t,\Lambda_t)=(x,i)$, where $\set{(X_t,\Lambda_t)}_{[0,T]}$ is the canonical process under $\bP^0$, and let $\widehat\bP^{t,x,i}$ be the counterpart  obtained from $\widehat\bP$. Then, 
   \begin{equation*}
     \widehat\bP^{t,x,i}=\frac{\varphi(T,X_T,\Lambda_T)}{\varphi(t,x,i)}\bP^{t,x,i}\;\textit{ if }\;\varphi(t,x,i)>0,\;\textit{trivial otherwise.}  
   \end{equation*}
Moreover, for every $s\in[t,T]$, if $\bP^{x,i}_{t,s}:=\bP^{t,x,i}\bigg|_{\cF_s}$ and $\widehat\bP^{x,i}_{t,s}:=\widehat\bP^{t,x,i}\bigg|_{\cF_s}$ are the restrictions of the respective path measures on $\cF_s$, then
   \begin{equation}\label{eq:relation betwel bP^ and bP over (t,s)}
     \widehat\bP^{x,i}_{t,s}=\frac{\varphi(s,X_s,\Lambda_s)}{\varphi(t,x,i)}\bP^{x,i}_{t,s}\;\textit{ if }\;\varphi(t,x,i)>0,\;\textit{trivial otherwise.}  
   \end{equation}
\end{cor}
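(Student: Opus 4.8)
The plan is to read both displays off \cref{lem: transition distribution under bP^}, the one extra input being that $\set{\varphi(r,X_r,\Lambda_r)}_{r\in[0,T]}$ is a martingale under the measures in play. First I would record this martingale property. By the definition \eqref{Definition: general varphi} together with the Markov property of $\bR$ one has $\varphi(r,X_r,\Lambda_r)=\Esub{\bR}{\g(X_T,\Lambda_T)\mid\cF_r}$, equivalently the semigroup identity $\sum_{j\in\rS}\int_{\R^d}\varphi(r',y,j)\,P_{ij}(r,x,r',dy)=\varphi(r,x,i)$ for $0\le r\le r'\le T$, which already appeared in the proof of \cref{lem: transition distribution under bP^}. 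Since $d\bP^0/d\bR$ is $\cF_0$-measurable and $\bP^{t,x,i}$ --- the conditional distribution of $\bP^0$ given $(X_t,\Lambda_t)=(x,i)$ --- carries, on $\sigma(X_r,\Lambda_r:t\le r\le T)$, the same transition kernels $P_{ij}$ as $\bR$, the process $\set{\varphi(r,X_r,\Lambda_r)}_{r\in[t,T]}$ is also a $\bP^{t,x,i}$-martingale for every $(t,x,i)$. In particular $\Esub{\bP^{t,x,i}}{\varphi(T,X_T,\Lambda_T)}=\varphi(t,x,i)$, so the right-hand side of the first identity is a bona fide probability measure whenever $\varphi(t,x,i)>0$.

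Next I would establish the first identity by matching finite-dimensional distributions. Fix $(t,x,i)$ with $\varphi(t,x,i)>0$. By \cref{lem: transition distribution under bP^} the canonical process under $\widehat\bP$ is Markov with transition kernels $\widehat P_{ij}(r,x,r',dy)=\frac{\varphi(r',y,j)}{\varphi(r,x,i)}P_{ij}(r,x,r',dy)$, and since $\bP^0$ is strong Markov the conditional laws $\widehat\bP^{t,x,i}$ and $\bP^{t,x,i}$ are, on $\sigma(X_r,\Lambda_r:t\le r\le T)$, the Markov processes issued from $(x,i)$ at time $t$ with kernels $\widehat P$ and $P$ respectively. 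For a grid $t=s_0<s_1<\cdots<s_n=T$, states $i_1,\dots,i_n$ and Borel sets $B_1,\dots,B_n$, inserting the formula for $\widehat P$ into the product of kernels and telescoping the $\varphi$-ratios gives
\[
\widehat\bP^{t,x,i}\!\Big(\bigcap_{k=1}^{n}\set{X_{s_k}\in B_k,\ \Lambda_{s_k}=i_k}\Big)
=\Esub{\bP^{t,x,i}}{\frac{\varphi(T,X_T,\Lambda_T)}{\varphi(t,x,i)}\prod_{k=1}^{n}\one_{\set{X_{s_k}\in B_k,\ \Lambda_{s_k}=i_k}}}.
\]
As such cylinders form a $\pi$-system generating $\sigma(X_r,\Lambda_r:t\le r\le T)$ and both sides are probability measures, a monotone-class argument yields $\widehat\bP^{t,x,i}=\frac{\varphi(T,X_T,\Lambda_T)}{\varphi(t,x,i)}\bP^{t,x,i}$; when $\varphi(t,x,i)=0$ the pair $(x,i)$ lies in a $\widehat\bP_t$-null set, so nothing needs to be said.

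For the restricted statement \eqref{eq:relation betwel bP^ and bP over (t,s)} I would simply push the first identity down to $\cF_s$, using the $\bP^{t,x,i}$-martingale property and the tower rule:
\[
\frac{d\,\widehat\bP^{x,i}_{t,s}}{d\,\bP^{x,i}_{t,s}}
=\Esub{\bP^{t,x,i}}{\frac{\varphi(T,X_T,\Lambda_T)}{\varphi(t,x,i)}\ \Big|\ \cF_s}
=\frac{\varphi(s,X_s,\Lambda_s)}{\varphi(t,x,i)}.
\]
The only genuinely delicate point --- exactly as in the ``$\triangle$''-step of the proof of \cref{lem: transition distribution under bP^} --- is the treatment of the degenerate sets $\set{\varphi(s_k,\cdot,i_k)=0}$ at intermediate grid times (and of the ``trivial otherwise'' clause when $\varphi(t,x,i)=0$): before telescoping one must replace each $B_k$ by $B_k\cap\set{\varphi(s_k,\cdot,i_k)>0}$, and then verify that the discarded part contributes nothing to either side, which follows from $\int_{\set{\varphi(s_k,\cdot,i_k)=0}}\varphi(T,x_n,i_n)\,P_{i_k\,i_{k+1}}(s_k,x_k,s_{k+1},dx_{k+1})\cdots\le\varphi(s_k,x_k,i_k)=0$. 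Everything else --- the martingale identity for $\varphi$, the telescoping, and the monotone-class extension --- is routine bookkeeping once \cref{lem: transition distribution under bP^} is in hand.
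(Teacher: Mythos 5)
Your argument is correct and is essentially the proof the paper has in mind: the paper omits the proof precisely because both identities are read off from the finite-dimensional distributions (equivalently, the transition kernels $\widehat P_{ij}$ and the zero-set bookkeeping of the ``$\triangle$'' step) established in the proof of \cref{lem: transition distribution under bP^}, which is exactly what you telescope, together with the semigroup/martingale identity for $\varphi$ that also appears there. Your only variation — obtaining \eqref{eq:relation betwel bP^ and bP over (t,s)} by conditioning the $t$-to-$T$ density down to $\cF_s$ via the tower rule rather than telescoping cylinders with times in $[t,s]$ — is an equivalent, equally routine route.
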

\noindent We will omit the proof of the corollary, as both statements are direct consequences of the finite-dimensional distributions derived in the proof of \cref{lem: transition distribution under bP^}\\

We need to make one more preparation for the upcoming discussions, and it concerns the infinitesimal generator, denoted by $L$, associated with the path measure $\bP^0$ (and hence $\bR$ as well) where, for a measurable and bounded function $f$ on $[0,T]\times\R^d\times\rS$, 
\begin{equation}\label{eq: L as the infinitismal generator}
    Lf(t,x,i):=\lim_{h\searrow0}\frac{1}{h}\bigg(\bE_{\bP^{t,x,i}}[f(t,X_{t+h},\Lambda_{t+h})]-f(t,x,i)\bigg)
\end{equation}
whenever the limit exists, with $\set{(X_t,\Lambda_t)}_{[0,T]}$ being the canonical process under $\bP^{t,x,i}$. In view of the SDE \eqref{Reference SDE general theory} that $\set{(X_t,\Lambda_t)}_{[0,T]}$ satisfies, $L$ also takes an integro-differential operator form as follows: if $f$ is of class $C^{1,2}$, then for every $(t,x,i)\in(0,T)\times\R^d\times\rS$,
\begin{equation}\label{Equation:Reference RS-generator}
\begin{aligned}
    &Lf(t,x,i)
    \\&\quad=b(t,x,i)\cdot\nabla f(t,x,i)+\frac{1}{2}\sum_{m,n} (\sigma\sigma^\top)_{m,n}\frac{\partial^2 f}{\partial x_m \partial x_n}(t,x,i)\\
    &\hspace{0.5cm}+\int_{\R^\ell} \pran{f(t,x+\gamma(t,x,i,z),i)-f(t,x,i)-\one_{|z|\leq 1}\gamma(t,x,i,z) \cdot \nabla f(t,x,i)}\nu(dz) \\
    &\hspace{5.5cm}+ \sum_{j\in\rS} Q_{ij}(t,x)\pran{f(t,\psi_{ij}(t,x),j)-f(t,x,i)}.
\end{aligned}  
\end{equation}
Note that the last line in the above expression of $L$ is a rewriting of the original regime-switching component via the identity
\begin{equation}\label{Equation: beta and alpha to Qij}
    \begin{aligned}
    \int_{\R_+}(f(t,x+\beta(t,x,i,w),i+&\alpha(t,x,i,w))-f(t,x,i))\leb(dw)\\
    &=
    \sum_{j\in\rS} Q_{ij}(t,x)(f(t,\psi_{ij}(t,x),j)-f(t,x,i)).
    \end{aligned}
\end{equation}

For $f$ sufficiently regular, e.g., $f$ of class $C^{1,2}_b$, the formulations \eqref{eq: L as the infinitismal generator} and \eqref{Equation:Reference RS-generator} of $Lf$ coincide. We simply refer to the integro-differential operator $L$ in \eqref{Equation:Reference RS-generator} as the \textit{generator} of the regime-switching jump diffusion described by the SDE \eqref{Reference SDE general theory}. The generator plays an important role in the study of the dynamics of the process. In particular, it yields the \textit{martingale problem} view of the SDE. Namely, for every $(t,x,i)\in[0,T]\times\R^d\times\rS$, the path measure $\bP^{t,x,i}$ as in \cref{cor:relation between bP^ and bP from t to s} (same as in Assumption \ref{Assumption: SDE}) is a martingale problem solution to the SDE \eqref{Reference SDE general theory} in the sense that, for any $f$ of class $C^{1,2}_c$,
\begin{equation*}
        \set{f(s,X_s,\Lambda_s)-f(t,x,i)-\int_t^s\bigg(\frac{\partial}{\partial r}+L\bigg)f(r,X_r,\Lambda_r)dr:t\leq s\leq T}
\end{equation*}
is a martingale with $\set{(X_t,\Lambda_t)}_{[0,T]}$ being the canonical process under $\bP^{t,x,i}$. 

In addition to the martingale perspective, we can also acquire from the generator information about the drift, the diffusion, the jump, and the regime-switching rate of the concerned process. As we will see in the later sections, the generator approach also offers a revealing view when it comes to the dynamics of the Schr\"odinger bridge $\widehat\bP$.   
\subsection{Schr\"odinger Bridge SDE and Generator} \label{Subsection: P-hat SDE and generator}
To study the Schr\"odinger bridge $\widehat\bP$ as in \eqref{equation:bP^ as h-transf of bP^0}, we first approach it from an SDE viewpoint. More specifically, we will interpret $\widehat\bP$ as a Girsanov transform from $\bP^0$, based on which we will derive the SDE that governs the canonical process under $\widehat\bP$. However, to achieve this goal, we need to impose additional conditions on the function $\varphi$ defined in \eqref{Definition: general varphi} and the generator $L$ as in \eqref{Equation:Reference RS-generator}, so that we can fit the formulation \eqref{equation:bP^ as h-transf of bP^0} of $\widehat\bP$ into the Girsanov transform framework. To this end, the following assumption will be invoked in this subsection.
\begin{enumerate}[label=(\textbf{H})]
\item \label{Assumption: general harmonic varphi}
   As a function on $[0,T]\times\R^d\times\rS$, $\varphi$ is of class $C^{1,2}$. In addition,
    \begin{enumerate}
        \item $\varphi>0$ on $(0,T)\times\R^d \times \rS$, and
        \item $\varphi$ is \textit{harmonic} under $L$ in the sense that $(\frac{\partial}{\partial t}+L) \varphi=0$ on $(0,T)\times\R^d \times \rS$.
    \end{enumerate} 
\end{enumerate}
\noindent \textbf{Remark. }Assumption \ref{Assumption: general harmonic varphi} entails that, as $t\nearrow T$, $\varphi(t,x,i)\rightarrow\varphi(T,x,i)=\g(x,i)$ for every $(x,i)\in\R^d\times\rS$. However, in view of the Schr\"odinger system \eqref{Equation: fg Schrodinger System with lambda}, only the restriction of $\g(\cdot,i)$ on $\textup{supp}(\bR_T(\cdot,i))$ is concerned in solving the SBP, and the value of $\g(\cdot,i)$ elsewhere has no impact. Therefore, the $C^{1,2}$-requirement in \ref{Assumption: general harmonic varphi} at $t=T$ should be understood as $\lim_{t\nearrow T}\varphi(t,\cdot,i)=\g(\cdot,i)$ $\bR_T$-a.e.

We have discussed in the \cref{subsection: general background} possible scenarios when $\varphi$ has the desired order of regularity and satisfies the harmonic condition. On the other hand, the positivity of $\varphi$ required in Assumption \ref{Assumption: general harmonic varphi} can be fulfilled, for example, when the target distributions $\rho_0,\rho_T$ are such that $\g>0$ everywhere, or the reference measure $\bR$ has an everywhere positive transition distribution. 
\subsubsection*{General Girsanov Transform}
To set up the Girsanov transform framework, consider measurable functions $u:(0,T)\times\R^d\times\rS\rightarrow\R^d$, $\theta:(0,T)\times\R^d\times\rS\times\R^\ell\rightarrow(-\infty,1)$ and $\xi:(0,T)\times\R^d\times\rS\times\R_+\rightarrow(0,\infty)$ such that, as functions on $(0,T)\times\R^d\times\rS$,\[
u(\cdot),\;\sup_{z\in\R^\ell}\frac{\abs{\theta(\cdot,z)}}{1\wedge\abs{z}},\;\sup_{w\in\R_+}\abs{\xi(\cdot,w)}
\]
are locally bounded, and for every $(t,x,i)\in(0,T)\times\R^d\times\rS$, $w\mapsto1-\xi(t,x,i,w)$ is compactly supported. Let $\set{(X_t,\Lambda_t)}_{[0,T]}$ be the canonical process under $\bP^0$. For every $0<t<s<T$,
we define
\begin{equation}\label{girsanov zt}
\begin{aligned}
    Z^{(u,\theta,\xi)}_{t,s}:=&\exp\bigg(-\int_t^su(r,X_{r},\Lambda_{r})\,dB_r-\frac{1}{2}\int_t^s|u(r,X_{r},\Lambda_{r})|^2\,dr\\
    &+ \int_t^s\int_{\R^\ell}\log(1-\theta(r,X_{r-},\Lambda_{r-},z))\tilde{N}(dr,dz) \\
    &+\int_t^s\int_{\R^\ell}\brac{\log(1-\theta(r,X_{r},\Lambda_{r},z))+\theta(r,X_{r},\Lambda_{r},z)}\nu(dz)dr\\
    &+ \int_t^s\int_{\R_+}\log\xi(r,X_{r-},\Lambda_{r-},w)\tilde{N}_1(dr,dw) \\
    &+\int_t^s\int_{\R_+}\brac{\log\xi(r,X_{r},\Lambda_{r},w)+1-\xi(r,X_{r},\Lambda_{r},w)}\leb(dw)dr\bigg),
\end{aligned}
\end{equation} 
It is straightforward to check that, given the choice of $(u,\theta,\xi)$, all the stochastic integrals in \eqref{girsanov zt} are well defined. By the Girsanov theorem for jump diffusions (e.g. \cite{oksendal2019applied}, Theorem 1.33), if $\bE_{\bP^0}[Z^{(u,\theta,\xi)}_{t,s}]=1$, then for every $(x,i)\in\R^d\times\rS$, $\bQ^{x,i}_{t,s}:=Z^{(u,\theta,\xi)}_{t,s}\bP^{x,i}_{t,s}$ is a path measure restricted to $\cF_s$, where we recall that $\bP^{x,i}_{t,s}$ is defined in \cref{cor:relation between bP^ and bP from t to s}; further, under $\bQ^{x,i}_{t,s}$, $dB^{(u,\theta,\xi)}_t:=u\,dt+dB_t$ is a Brownian motion, \[\tilde{N}^{(u,\theta,\xi)}(dt,dz):=N(dt,dz)-(1-\theta)\nu(dz)dt\] is the compensated Poisson random measure of $N(dt,dz)$, and \[\tilde{N}_1^{(u,\theta,\xi)}(dt,dw):=N_1(dt,dw)-\xi\leb(dw)dt\] is the compensated Poisson random measure of $N_1(dt,dw)$. 

Moreover, we observe that if $\bE_{\bP^0}[Z^{(u,\theta,\xi)}_{t,s}]=1$ for all $0<t<s<T$, then \[\set{\bQ^{x,i}_{t,s}:0<t<s<T,x\in\R^d,i\in\rS}\]
is a consistent family of path measures; that is, for every $0\leq t\leq t^\prime<s^\prime\leq s<T$, $x,y\in\R^d$ and $i,j\in\rS$, if $\big(\bQ^{x,i}_{t,s}\big)^{t^\prime,\,y,\,j}$ denotes  the conditional distribution of  $\bQ^{x,i}_{t,s}$ conditioning on its canonical process $(X_{t^\prime},\Lambda_{t^\prime})\equiv(y,j)$, then $\big(\bQ^{x,i}_{t,s}\big)^{t^\prime,\,y,\,j}$ restricted to $\cF_{s\prime}$ is identical with $\bQ^{y,j}_{t^\prime,s^\prime}$. Hence there exists a path measure $\bP^{(u,\theta,\xi)}$  on $D((0,T);\R^d\times\rS)$ such that, for every $0<t<s<T$ and $(x,i)\in\R^d\times\rS$,\begin{equation}\label{eq:consistency of P^(u,theta,xi)}
   \big(\bP^{(u,\theta,\xi)}\big)^{t,x,i}\bigg|_{\cF_s}=\bQ^{x,i}_{t,s}=Z^{(u,\theta,\xi)}_{t,s}\bP^{x,i}_{t,s}. 
\end{equation} 
We will again view $\bP^{(u,\theta,\xi)}$ as a Girsanov transform from $\bP^0$ by the triple $(u,\theta,\xi)$.

Thus, by rewriting the original SDE \eqref{Reference SDE general theory}, we see that the canonical process $\set{(X_t,\Lambda_t)}_{(0,T)}$ under $\bP^{(u,\theta,\xi)}$ is again a regime-switching jump diffusion satisfying a new SDE: 
\begin{equation}\label{Girsanov SDE - bQ,u,theta,theta1}
\left\{
\begin{aligned}
dX_t&=
 \bigg[b(t,X_t,\Lambda_t)-\sigma(t,X_t,\Lambda_t) u(t,X_t,\Lambda_t)\\
 &\hspace{4cm}-\int_{|z|\leq 1}\theta(t,X_t,\Lambda_t,z)\gamma(t,X_t,\Lambda_t,z)\nu(dz)\bigg]dt\\
 &\hspace{1cm}+\sigma(t,X_t,\Lambda_t)dB^{(u,\theta,\xi)}_t + \int_{|z|\leq 1}\gamma(t,X_{t-},\Lambda_{t-},z)\tilde{N}^{(u,\theta,\xi)}(dt,dz)\\
 &\hspace{1cm}+\int_{|z|> 1}\gamma(t,X_{t-},\Lambda_{t-},z)N(dt,dz)
 + \int_{\R_+}\beta(t,X_{t-},\Lambda_{t-},w)N_1(dt,dw), \\
    d\Lambda_t &= \int_{\R_+}\alpha(t,X_{t-},\Lambda_{t-},w)N_1(dt,dw)\qquad\textup{ for }\;t\in(0,T).    
\end{aligned}
\right.
\end{equation}
We can also determine the generator of $\set{(X_t,\Lambda_t)}_{(0,T)}$ under $\bP^{(u,\theta,\xi)}$, denoted by $L^{(u,\theta,\xi)}$: for $f$ of class $C^{1,2}_c$ and $(t,x,i)\in(0,T)\times\R^d\times\rS$,
\begin{equation}\label{Girsanov generator}
    \begin{aligned}
        L^{(u,\theta,\xi)}&f(t,x,i)\\:=
        &\frac{1}{2}\sum_{m,n} (\sigma\sigma^\top)_{m,n}\frac{\partial^2 f}{\partial x_m \partial x_n}(t,x,i)\\
        &\hspace{2cm}+\pran{b-\sigma u- \int_{|z|\leq 1} \theta(\cdot,z)\gamma(\cdot,z)\nu(dz)}(t,x,i)\cdot \nabla f(t,x,i) \\
        & +\int_{\R^\ell} \bigg(f(t,x+\gamma(t,x,i,z),i)-f(t,x,i)\\
        &\hspace{3cm}- \one_{|z|\leq 1}\nabla f(t,x,i) \cdot \gamma(t,x,i,z)\bigg)(1-\theta(t,x,i,z))\nu(dz) \\
        &+\int_{\R_+} \bigg(f(t,x+\beta(t,x,i,w),i+\alpha(t,x,i,w))-f(t,x,i)\bigg)\xi(t,x,i,w)\leb(dw),
    \end{aligned}
\end{equation}
where, by the equivalence \eqref{Equation: beta and alpha to Qij}, the last term can also be written as
\[
\sum_{j\in\rS}\pran{f(t,\psi_{ij}(t,x),j)-f(t,x,i)}\int_{\Delta_{ij}(t,x)}\xi(t,x,i,w)\leb(dw).
\]
\subsubsection*{The ``Bridge'' Girsanov Transform}
We now apply the above general discussions on the Girsanov transform to prove the main result of this subsection.
\begin{thm}\label{Theorem: General P-hat} 
Suppose that Assumption \textup{\ref{Assumption: general harmonic varphi}} holds. Let $\widehat\bP$ be the Schr\"odinger bridge as in \eqref{equation:bP^ as h-transf of bP^0}. Then, the canonical process $\set{(X_t,\Lambda_t)}_{(0,T)}$ under $\widehat\bP$ satisfies the SDE:
\begin{equation}\label{Phat SDE general theory}
\left\{
    \begin{aligned}
        dX_t&=b^\varphi(t,X_{t},\Lambda_t)dt+\sigma(t,X_{t},\Lambda_t)dB_t + \int_{|z|\leq 1}\gamma(t,X_{t-},\Lambda_{t-} ,z)\tilde{N}^\varphi(dt,dz)\\
        &\hspace{1cm} +\int_{|z|>1}\gamma(t,X_{t-},\Lambda_{t-} ,z)N(dt,dz) + \int_{\R_+}\beta(t,X_{t-},\Lambda_{t-},w)N_1(dt,dw), \\
    d\Lambda_t &= \int_{\R_+}\alpha(t,X_{t-},\Lambda_{t-},w)N_1(dt,dw),\;\textup{ for }0<t<T,     
    \end{aligned}
\right.
\end{equation}
with $(X_0,\Lambda_0)$ and $(X_T,\Lambda_T)$ having distributions $\rho_0$ and $\rho_T$ respectively, where for every $(t,x,i)\in(0,T)\times\R^d\times\rS$, $z\in\R^\ell$ and $w\in\R_+$,
\begin{enumerate}
    \item The drift coefficient is given by \begin{equation*}
    \begin{aligned}
    b^\varphi(t,x,i):=b(t,x,i)&+\sigma\sigma^\top\nabla\log\varphi(t,x,i) + \\
 & \int_{|z|\leq 1} \frac{\varphi(t,x+\gamma(t,x,i,z),i)-\varphi(t,x,i)}{\varphi(t,x,i)}\gamma(t,x,i,z)\nu(dz)
    \end{aligned}
\end{equation*}
\item $\set{B_t}_{t\geq 0}$ is a $d$-dimensional Brownian motion.\\

\item $N(dt,dz)$ is an $\ell$-dimensional Poisson random measure with intensity measure \[\nu^\varphi(t,x,i;dz):=\frac{\varphi(t,x+\gamma(t,x,i,z),i)}{\varphi(t,x,i)}\nu(dz) \] and $\tilde{N}^\varphi(dt,dz):=N(dt,dz)-\nu^\varphi(t,x,i;dz)dt$ is its $\widehat\bP$-compensated counterpart.\\
\item $N_1(dt,dw)$ is a one-dimensional Poisson random measure with intensity measure \[\leb^\varphi(t,x,i;dw)= \frac{\varphi(t,x+\beta(t,x,i,w),i+\alpha(t,x,i,w))}{\varphi(t,x,i)}\leb(dw).\]Or equivalently, for $w\in\Delta_{ij}(t,x)$, $\leb^\varphi(t,x,i;dw)=\frac{\varphi(t,\psi_{ij}(t,x),j)}{\varphi(t,x,i)}\leb(dw)$.

\end{enumerate}

Moreover, if $L_{\widehat\bP}$ denotes the generator of $\set{(X_t,\Lambda_t)}_{(0,T)}$ under $\widehat\bP$, then for every function $f$ of class $C^{1,2}_c$ and $(t,x,i)\in(0,T)\times\R^d\times\rS$,
\begin{equation}\label{P-hat generator}
    \begin{aligned}
    &L_{\widehat\bP}f(t,x,i)\\
    &\;:=b^\varphi(t,x,i)\cdot \nabla f(t,x,i) +\frac{1}{2}\sum_{m,n} (\sigma\sigma^\top)_{m,n}\frac{\partial^2 f}{\partial x_m \partial x_n}(t,x,i)\\
    & \quad+\int_{\R^\ell} \bigg(f(t,x+\gamma(t,x,i,z),i)-f(t,x,i)-\one_{|z|\leq 1}\gamma(t,x,i,z) \cdot \nabla f(t,x,i)\bigg)\nu^\varphi(dz) \\
    &\hspace{3.3cm}+ \sum_{j} \frac{\varphi(t,\psi_{ij}(t,x),j)}{\varphi(t,x,i)}Q_{ij}(t,x)\pran{f(t,\psi_{ij}(t,x),j)-f(t,x,i)},
    \end{aligned}
\end{equation}
and further,
\begin{equation}\label{h-transform generator identity}
    \pran{\frac{\partial}{\partial t}+ L_{\widehat\bP}} f (t,x,i)= \frac{1}{f(t,x,i)}\pran{\frac{\partial}{\partial t}+L}(\varphi f)(t,x,i)
\end{equation}
whenever $f(t,x,i)\neq 0$.
\end{thm}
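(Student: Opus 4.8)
The plan is to identify $\widehat\bP$ with one of the general Girsanov transforms $\bP^{(u,\theta,\xi)}$ of $\bP^0$ built above, for the triple obtained by matching the coefficients of the target SDE \eqref{Phat SDE general theory} against those of \eqref{Girsanov SDE - bQ,u,theta,theta1}:
\[
u(t,x,i):=-\sigma(t,x,i)^\top\nabla\log\varphi(t,x,i),\qquad\theta(t,x,i,z):=1-\frac{\varphi(t,x+\gamma(t,x,i,z),i)}{\varphi(t,x,i)},
\]
\[
\xi(t,x,i,w):=\frac{\varphi(t,x+\beta(t,x,i,w),i+\alpha(t,x,i,w))}{\varphi(t,x,i)}.
\]
Once $\widehat\bP=\bP^{(u,\theta,\xi)}$ is established on $D((0,T);\R^d\times\rS)$, the SDE \eqref{Phat SDE general theory} and the generator \eqref{P-hat generator} are just \eqref{Girsanov SDE - bQ,u,theta,theta1} and \eqref{Girsanov generator} specialised to this triple: one checks $-\sigma u=\sigma\sigma^\top\nabla\log\varphi$ and $-\int_{|z|\leq 1}\theta\gamma\,\nu(dz)=\int_{|z|\leq 1}\frac{\varphi(t,x+\gamma,i)-\varphi(t,x,i)}{\varphi(t,x,i)}\gamma\,\nu(dz)$, whence the new drift equals $b^\varphi$; likewise $(1-\theta)\nu(dz)=\nu^\varphi(t,x,i;dz)$ and $\xi\,\leb(dw)=\leb^\varphi(t,x,i;dw)$, the latter producing on each $\Delta_{ij}(t,x)$ the $\varphi$-weighted switching rate $\tfrac{\varphi(t,\psi_{ij}(t,x),j)}{\varphi(t,x,i)}Q_{ij}(t,x)$ through the equivalence \eqref{Equation: beta and alpha to Qij}. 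The endpoint marginals are $\rho_0,\rho_T$ simply because $\widehat\bP$ is, by construction, the solution of the SBP \eqref{Definition:DynamicSBP}.

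First I would verify that $(u,\theta,\xi)$ is admissible in the sense demanded by \eqref{girsanov zt}. Assumption \ref{Assumption: general harmonic varphi} gives that $\varphi$ is of class $C^{1,2}$ and strictly positive on $(0,T)\times\R^d\times\rS$, so on every compact subset of $(0,T)\times\R^d$ (for each fixed regime) $\varphi$ is bounded away from $0$ with bounded gradient. Together with the local boundedness of $\sigma,\gamma,\beta,\alpha,\psi_{ij},Q_{ij}$ from Assumption \ref{Assumptions C}, this yields: $u$ locally bounded; $\theta<1$ and $\xi>0$ everywhere (positivity of $\varphi$); $\sup_z\abs{\theta(\cdot,z)}/(1\wedge\abs z)$ locally bounded, splitting the range $\abs z\leq 1$ — where a first-order Taylor estimate gives $\abs{\varphi(t,x+\gamma,i)-\varphi(t,x,i)}\lesssim\abs\gamma\lesssim\abs z$ — from $\abs z>1$, where $\gamma$ and hence $x+\gamma$ stay in a locally bounded set; $\sup_w\abs{\xi(\cdot,w)}$ locally bounded since off $\bigcup_{j\neq i}\Delta_{ij}(t,x)$ one has $\xi\equiv 1$, while on it $\xi$ takes the finitely many locally bounded values $\varphi(t,\psi_{ij}(t,x),j)/\varphi(t,x,i)$; and finally $1-\xi(t,x,i,\cdot)$ is supported on the bounded set $\bigcup_{j\neq i}\Delta_{ij}(t,x)$, hence compactly supported.

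The substantive step is the identity $Z^{(u,\theta,\xi)}_{t,s}=\varphi(s,X_s,\Lambda_s)/\varphi(t,X_t,\Lambda_t)$ on $\{\varphi(t,X_t,\Lambda_t)>0\}$, valid for $0<t<s<T$ under $\bP^{x,i}_{t,s}$. I would prove it by applying It\^o's formula to $r\mapsto\varphi(r,X_r,\Lambda_r)$ along \eqref{Reference SDE general theory} — legitimate by the martingale-problem formulation recalled in \cref{subsection:preliminaries}, the $C^{1,2}$-regularity of $\varphi$, and a localisation on compacts of $(0,T)\times\R^d$ on which $\varphi$ stays away from $0$ — and observing that harmonicity $(\partial_r+L)\varphi=0$ annihilates the $dr$-term, so that $\varphi(r,X_r,\Lambda_r)$ is a local martingale with $d\varphi_r/\varphi_{r-}=-u\cdot dB_r-\int_{\R^\ell}\theta\,\widetilde N(dr,dz)+\int_{\R_+}(\xi-1)\,\widetilde N_1(dr,dw)$; the jump coefficients are $-\theta$ and $\xi-1$ exactly because the jump of $\varphi$ across an $N$-atom is $\varphi(t,x+\gamma,i)-\varphi(t,x,i)=-\theta\,\varphi(t,x,i)$ and across an $N_1$-atom is $\varphi(t,x+\beta,i+\alpha)-\varphi(t,x,i)=(\xi-1)\varphi(t,x,i)$, while the $dB$-coefficient is $\sigma^\top\nabla\varphi/\varphi=-u$. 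Taking the Dol\'eans--Dade exponential and using the explicit formula for stochastic exponentials of Brownian and compensated-Poisson integrals, the right-hand side is precisely $Z^{(u,\theta,\xi)}_{t,s}$ for the chosen triple. Now \cref{cor:relation between bP^ and bP from t to s}, which already supplies $\widehat\bP^{x,i}_{t,s}=\frac{\varphi(s,X_s,\Lambda_s)}{\varphi(t,x,i)}\bP^{x,i}_{t,s}$, shows both that $\bE_{\bP^0}[Z^{(u,\theta,\xi)}_{t,s}]=1$ for all $0<t<s<T$ — so the consistency relation \eqref{eq:consistency of P^(u,theta,xi)} applies — and that $(\bP^{(u,\theta,\xi)})^{t,x,i}\big|_{\cF_s}=Z^{(u,\theta,\xi)}_{t,s}\bP^{x,i}_{t,s}=\widehat\bP^{x,i}_{t,s}$. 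Hence $\widehat\bP$ and $\bP^{(u,\theta,\xi)}$ agree on $\cF_s$ for every $s<T$, completing the reduction of the first paragraph.

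It remains to establish the algebraic identity \eqref{h-transform generator identity}, which involves no probability. I would expand $(\partial_t+L)(\varphi f)$ directly: the Leibniz rule for $\partial_t$ and for the drift/diffusion part of $L$ contributes the cross term $(\sigma\sigma^\top\nabla\varphi)\cdot\nabla f$, and for the jump and regime-switching parts of $L$ one writes each increment $(\varphi f)(y')-(\varphi f)(y)$ — at $y'=x+\gamma(t,x,i,z)$ (jump term) and $y'=\psi_{ij}(t,x)$ with regime $j$ (switching term) — as $\varphi(y')\brac{f(y')-f(y)}+f(y)\brac{\varphi(y')-\varphi(y)}$. The terms carrying the factor $f(t,x,i)$ reassemble into $f(t,x,i)\,(\partial_t+L)\varphi(t,x,i)$, which is $0$ by harmonicity; regrouping the rest and using $\varphi\,b^\varphi\cdot\nabla f=\varphi\,b\cdot\nabla f+(\sigma\sigma^\top\nabla\varphi)\cdot\nabla f+\int_{|z|\leq 1}(\varphi(t,x+\gamma,i)-\varphi(t,x,i))\gamma\cdot\nabla f\,\nu(dz)$ together with the definitions of $\nu^\varphi$ and $\leb^\varphi$, the total collapses to $\varphi(t,x,i)\,(\partial_t+L_{\widehat\bP})f(t,x,i)$; dividing by $\varphi(t,x,i)>0$ gives \eqref{h-transform generator identity}. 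The one genuinely delicate point is the It\^o computation in the preceding paragraph: differentiating a merely $C^{1,2}$, possibly unbounded function of a jump diffusion with two independent Poisson components — the compensated $\widetilde N$ with its $\one_{|z|\leq 1}$ truncation and the hybrid-jump $\widetilde N_1$ carrying simultaneous spatial and regime jumps — and checking that harmonicity cancels exactly the right drift terms; the passage from local to true martingale is, by contrast, free here, since \cref{cor:relation between bP^ and bP from t to s} already certifies that $\varphi(s,\cdot)/\varphi(t,\cdot)$ is a genuine probability density.
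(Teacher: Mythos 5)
Your proposal is correct and takes essentially the same route as the paper: identify the Girsanov triple $(u,\theta,\xi)$ in terms of $\varphi$, use It\^o's formula together with harmonicity to obtain $Z^{(u,\theta,\xi)}_{t,s}=\varphi(s,X_s,\Lambda_s)/\varphi(t,X_t,\Lambda_t)$, invoke \cref{cor:relation between bP^ and bP from t to s} and the consistency of the family $\bQ^{x,i}_{t,s}$ to identify $\widehat\bP$ with $\bP^{(u,\theta,\xi)}$, then read off \eqref{Phat SDE general theory} and \eqref{P-hat generator} from \eqref{Girsanov SDE - bQ,u,theta,theta1}--\eqref{Girsanov generator} and verify \eqref{h-transform generator identity} by direct expansion using $(\frac{\partial}{\partial t}+L)\varphi=0$. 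The only cosmetic difference is that you apply It\^o to $\varphi$ and pass through the Dol\'eans--Dade exponential, whereas the paper applies It\^o directly to $\log\varphi$; both rest on exactly the same cancellation.
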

\begin{proof}
    This result is established by \cite[Theorem 4.2]{zlotchevski2024schrodinger} for jump diffusions without regime switching. We extend the proof to cover the regime-switching case. In particular, we will show that under Assumption \ref{Assumption: general harmonic varphi}, when restricted to $\cF_{T-}$, $\widehat\bP$ is in fact a Girsanov transform from $\bP^0$ with $(u,\theta,\xi)$ explicitly determined by $\varphi$. Throughout the proof, whenever possible, we suppress the argument of $\gamma(t,x,i,z)$, $\beta(t,x,i,w)$, and $\alpha(t,x,i,w)$ to simplify the notation.
    
   For now, we take $\set{(X_t,\Lambda_t)}_{[0,T]}$ to be the canonical process under $\bP^0$, and start with setting up It\^o's formula for $\set{\log \varphi(t,X_t,\Lambda_t)}_{(0,T)}$. This is still feasible even though $\varphi(t,x,i)$ is not differentiable in the third variable. 
   By direct computation and the assumption that $\varphi$ is harmonic, we have that
    \begin{equation}\label{-log phi identity}
    \begin{aligned}
    &\pran{\frac{\partial}{\partial t}+ L} (\log \varphi)(t,x,i) \\
    &\hspace{1cm}= -\frac{1}{2}|\sigma^\top\nabla\log \varphi|^2 (t,x,i)\\
    &\hspace{2cm}+\int_{\R^\ell} \brac{\log\frac{\varphi(t,x+\gamma,i)}{\varphi(t,x,i)}-\frac{\varphi(t,x+\gamma,i)-\varphi(t,x,i)}{\varphi(t,x,i)}}\nu(dz) \\
    &\hspace{2cm}+ \sum_{j\in\rS}Q_{ij}(t,x)\brac{\log\frac{\varphi(t,\psi_{ij}(t,x),j)}{\varphi(t,x,i)}-\frac{\varphi(t,\psi_{ij}(t,x),j)-\varphi(t,x,i)}{\varphi(t,x,i)}},
    \end{aligned}
    \end{equation}
    where we used the equivalence \eqref{Equation: beta and alpha to Qij} for the last term. 
   Combining the above computation with It\^o's formula yields, for every $0<t<s<T$,
    \begin{equation}\label{eq: Ito formula applied to -log(varphi)}
        \begin{aligned}
            &\log \pran{\frac{\varphi(s,X_s,\Lambda_s)}{\varphi(t,X_t,\Lambda_t)}}\\
            &\quad=\int_t^{s}\sigma^\top\nabla\log \varphi(r,X_r,\Lambda_r)\,dB_r-\frac{1}{2}\int_t^{s}|\sigma^\top\nabla\log \varphi(r,X_r,\Lambda_r)|^2\,dr\, \\
            &\qquad\qquad +\int_t^{s}\int_{\R^\ell} \log\frac{\varphi(r,X_{r-}+\gamma,\Lambda_{r-})}{\varphi(r,X_{r-},\Lambda_{r-})}\tilde{N}(dr,dz)\\
            &\qquad\qquad+\int_t^{s}\int_{\R^\ell} \pran{\log\frac{\varphi(r,X_{r-}+\gamma,\Lambda_{r-})}{\varphi(r,X_{r-},\Lambda_{r-})}+1-\frac{\varphi(r,X_{r-}+\gamma,\Lambda_{r-})}{\varphi(r,X_{r-},\Lambda_{r-})}}\nu(dz)dr\\
            &\qquad\qquad+\int_t^{s}\int_{\R_+}\log \frac{\varphi(r,X_{r-}+\beta,\Lambda_{r-}+\alpha)}{\varphi(r,X_{r-},\Lambda_{r-})} N_1(dr,dw)\\
            &\qquad\qquad+\int_t^{s}\sum_{j\in\rS}Q_{\Lambda_{r-}\,j}(r,X_{r-}) \pran{ 1- \frac{\varphi(r,\psi_{\Lambda_{r-}j}(r,X_{r-}),j)}{\varphi(r,X_{r-},\Lambda_{r-})}}dr.
        \end{aligned}
    \end{equation}
Since $\set{(X_t,\Lambda_t)}_{[0,T]}$ has c\`adl\`ag paths, $\bP^0$-a.e. $\set{(X_t,\Lambda_t)}_{[0,T]}$ is contained in a bounded subset of $\R^d\times\rS$. Thus, Assumption  \ref{Assumptions C} on the coefficients and Assumption \ref{Assumption: general harmonic varphi} on $\varphi$ are sufficient to guarantee that all the (stochastic) integrals in the expression above are well defined. Moreover, the right-hand side is precisely equal to $\log(Z^{(u,\theta,\xi)}_{t,s})$ as in  \eqref{girsanov zt} with the choice
\begin{equation}\label{P-hat Girsanov triple}
\begin{dcases}
u(t,x,i)=-\sigma^\top\nabla\log \varphi(t,x,i),\\
\theta(t,x,i,z) = 1-\frac{\varphi(t,x+\gamma(t,x,i,z),i)}{\varphi(t,x,i)},\\
\xi(t,x,i,w) =  \frac{\varphi(t,x+\beta(t,x,i,w),i+\alpha(t,x,i,w))}{\varphi(t,x,i)}.
\end{dcases}
\end{equation}
Now, by \eqref{eq: transition distribution under P^}, we have that \[
\bE_{\bP^0}\brac{Z^{(u,\theta,\xi)}_{t,s}}=\bE_{\bP^0}\brac{\frac{\varphi(s,X_s,\Lambda_s)}{\varphi(t,X_t,\Lambda_t)}}=1\textup{ for every }0<t<s<T,
\]
which, as stated above, yields a path measure $\bP^{(u,\theta,\xi)}$ on $D((0,T);\R^d\times\rS)$, satisfying \eqref{eq:consistency of P^(u,theta,xi)}, and serving as a Girsanov transform from $\bP^0$ with the triple $(u,\theta,\xi)$ in \eqref{P-hat Girsanov triple}.  Thus, combining \eqref{eq:relation betwel bP^ and bP over (t,s)} and \eqref{eq:consistency of P^(u,theta,xi)}, we arrive at, for every $0<t<s<T$ and $(x,i)\in\R^d\times\rS$, 
\[
\widehat\bP^{x,i}_{t,s}=Z_{t,s}^{(u,\theta,\xi)}\bP^{x,i}_{t,s}=\bQ^{x,i}_{t,s}=\big(\bP^{(u,\theta,\xi)}\big)^{t,x,i}\bigg|_{\cF_s}.
\]
We conclude that $\widehat\bP$, as a path measure on $D((0,T);\R^d\times\rS)$, is identical with  $\bP^{(u,\theta,\xi)}$, and hence can also be viewed as a Girsanov transform from $\bP^0$ by the same $(u,\theta,\xi)$ as above. Therefore, the SDE \eqref{Phat SDE general theory} and the generator \eqref{P-hat generator} claimed in the statement follow from \eqref{Girsanov SDE - bQ,u,theta,theta1} and \eqref{Girsanov generator} respectively. 
The equivalence \eqref{Equation: beta and alpha to Qij} then yields the regime-switching term in the generator $L_{\widehat\bP}$. Again, we note that under Assumptions \ref{Assumptions C} and \ref{Assumption: general harmonic varphi}, all the stochastic integrals concerned in \eqref{Phat SDE general theory} exist under $\widehat\bP$.

Finally, the identity \eqref{h-transform generator identity} can be verified by expanding both sides in terms of the SDE coefficients, incorporating \eqref{P-hat generator} and the assumption that $(\frac{\partial}{\partial t}+L)\varphi=0$.
\end{proof}
As seen from the theorem above, the canonical process under the Schr\"odinger bridge $\widehat\bP$ is again a regime-switching jump diffusion, where, in addition to the new drift $b^\varphi$ and the new jump intensity measure $\nu^\varphi$, the switching mechanism also has a new transition rate matrix $Q^\varphi(t,x)$ with entries $Q^\varphi _{ij}(t,x):=\frac{\varphi(t,\psi_{ij}(t,x),j)}{\varphi(t,x,i)}Q_{ij}(t,x), \,i,j \in \rS$.
\\

\noindent \textbf{Remark.} In the above discussion of the Girsanov transform perspective of the Schr\"odinger bridge, we have excluded the endpoints $t=0$ and $t=T$ (since we already know that the endpoint distributions are $\rho_0,\rho_T$), and focused only on the dynamics of the ``bridge''. However, if 
$\varphi$ is sufficiently regular such that It\^o's formula can be applied to $\set{-\log(\varphi(t,X_t,\Lambda_t))}_{[0,T]}$ (with $t=0,t=T$ included), then \eqref{eq: Ito formula applied to -log(varphi)} holds for all $0\leq t<s\leq T$ and hence, with $(u,\theta,
\xi)$ as in \eqref{P-hat Girsanov triple},
$Z_{0,T}^{(u,\theta,\xi)}=\frac{\varphi(T,X_T,\Lambda_T)}{\varphi(0,X_0,\Lambda_0)}$ and $\bE_{\bP^0}[Z_{0,T}^{(u,\theta,\xi)}]=1$. In this case, the Girsanov transform formulation of $\widehat\bP$ also extends to the endpoints in the sense that, $\widehat\bP=Z^{(u,\theta,\xi)}_{0,T}\bP^0$ is a Girsanov transform from $\bP_0$ as a path measure on $D([0,T];\R^d\times\rS)$. This will be the set-up of our work in the next subsection.

More generally, if the triple $(u,\theta,\xi)$ is such that the formula \eqref{girsanov zt} of $Z_{t,s}^{(u,\theta,\xi)}$ is well defined for all $0\leq t<s\leq T$ and $\bE_{\bP^0}[Z_{0,T}^{(u,\theta,\xi)}]=1$, then $\bP^{(u,\theta,\xi)}=Z_{0,T}^{(u,\theta,\xi)}\bP^0$ becomes a Girsanov transform from $\bP^0$, again, with sample paths over the entire interval $[0,T]$. In this case, we have an explicit integral representation of the KL divergence of $\bP^{(u,\theta,\xi)}$ from $\bP^0$. For simplicity, below we will write $u(t,X_t,\Lambda_t),\,\theta(t,X_{t-},\Lambda_{t-},z),\,\xi(t,X_{t-},\Lambda_{t-},w)$ as $u,\,\theta,\,\xi$ respectively. Assuming all the (stochastic) integrals in \eqref{girsanov zt} are well defined for every $t\in[0,T]$, we can rewrite $\log Z^{(u,\theta,\xi)}_T$ as
\begin{equation}\label{eq: ito formula on logZ}
\begin{aligned} 
    &-\int_0^Tu\;dB^{(u,\theta,\xi)}_r+\frac{1}{2}\int_0^t|u|^2dr\\ 
    &\quad+\int_0^T\int_{\R^\ell}\log(1-\theta)\,\tilde{N}^{(u,\theta,\xi)}(dr,dz) +\int_0^T\int_{\R^\ell}\pran{\log(1-\theta)(1-\theta)+\theta}\nu(dz)dr\\
    &\hspace{1.5cm}+\int_0^T\int_{\R_+}\log\xi\;\tilde{N}_1^{(u,\theta,\xi)}(dr,dw) +\int_0^T\int_{\R_+}\pran{\xi\log \xi+1- \xi}\leb(dw)dr,
\end{aligned}
\end{equation}
where $dB^{(u,\theta,\xi)},\, \tilde{N}^{(u,\theta,\xi)}(dt,dz),\,\tilde{N}_1^{(u,\theta,\xi)}(dt,dw)$ are as in the discussions before \eqref{Girsanov SDE - bQ,u,theta,theta1}, and hence the associated integrals will be martingales under $\bP^{(u,\theta,\xi)}$ when $(u,\theta,\xi)$ satisfies proper conditions. In this case, the KL divergence of $\bP^{(u,\theta,\xi)}$ from $\bP^0$ can be computed as
\begin{equation}\label{Equation: KL divergence Girsanov representation}
\begin{aligned}
    \KL{\bP^{(u,\theta,\xi)}}{\bP^0}&=\bE_{\bP^{(u,\theta,\xi)}}\brac{\log Z^{(u,\theta,\xi)}_T}\\ &=\bE_{\bP^{(u,\theta,\xi)}}\bigg[\int_0^T\bigg(\frac{1}{2}|u|^2+\int_{\R^\ell}\pran{(1-\theta)\log(1-\theta)+\theta}\nu(dz)\\
    &\hspace{4cm}+\int_{\R_+}\pran{\xi\log  \xi+1- \xi}\leb(dw)\bigg)dt\bigg].
\end{aligned}
\end{equation}
This representation of $\KL{\bP^{(u,\theta,\xi)}}{\bP^0}$ is relevant to the stochastic control viewpoint of the Schr\"odinger bridge, which we will introduce in the next subsection. 
\subsection{Stochastic Control Formulation and Solution}\label{Subsection: stochastic control} 
In this subsection, we will establish a stochastic control formulation for the SBP under investigation. Under stronger assumptions on $\varphi$ (compared to the previous subsection), we will show that the sought after Schr\"odinger bridge $\widehat\bP$ can be derived from the solution to a stochastic control problem. To achieve this goal, we adopt the following strengthening of Assumption \ref{Assumption: general harmonic varphi}.\\
\begin{enumerate}[label=(\textbf{H'})]
    \item The function $\varphi$ defined by \eqref{Definition: general varphi} satisfies Assumption \ref{Assumption: general harmonic varphi}. In addition, for every $i\in\rS$, $\varphi(T,\cdot,i)=\g(\cdot,i)>0$ $\bR_T$-a.e., and for every compact set $K\subseteq\R^d$, 
    \begin{equation*}
     \int_0^T\sup_{x\in K}\abs{\nabla\log\varphi(t,x,i)}^2dt<\infty\textup{ and }\int_0^T\sup_{j\neq i,x\in K}\abs{\frac{\varphi(t,x,j)}{\varphi(t,x,i)}}^2dt<\infty.
    \end{equation*}
    \label{asm : log varphi C12b}
\end{enumerate}
\noindent \textbf{Remark.} Let $\set{(X_t,\Lambda_t)}_{[0,T]}$ be the canonical process under $\bP^0$. Recall that, under Assumption \ref{Assumption: general harmonic varphi}, when applying It\^o's formula to $\set{-\log\big(\frac{\varphi(s,X_s,\Lambda_s)}{\varphi(0,X_0,\Lambda_0)}\big)}_{s\in [0,T)}$, we get the equation \eqref{eq: Ito formula applied to -log(varphi)} with $t=0$. Combining Assumption \ref{asm : log varphi C12b} with Assumption \ref{Assumptions C}, it is not hard to verify that all the (stochastic) integrals in the RHS of \eqref{eq: Ito formula applied to -log(varphi)} are still well defined when every ``$\int_t^s$'' is replaced by ``$\int_0^T$''. In addition, given the continuity of $\varphi$ as $t\nearrow T$, both sides of \eqref{eq: Ito formula applied to -log(varphi)} (with $t=0$), as processes parametrized by $s\in[0,T]$, have c\`adl\`ag sample paths over $[0,T]$. Therefore, $\bP^0$-a.e., the relation \eqref{eq: Ito formula applied to -log(varphi)} remains valid all the way to $t=0,s=T$. In this case, as we remarked at the end of the previous subsection, \[\widehat\bP=\bP^{(u,\theta,\xi)}=Z_{0,T}^{(u,\theta,\xi)}\bP^0\textup{ with }(u,\theta,\xi)\textup{ as in \eqref{P-hat Girsanov triple},}\]
i.e., $\widehat\bP$ is indeed the Girsanov transform from $\bP^0$ over paths on the entire interval $[0,T]$.\\   

To develop the stochastic control approach toward the SBP \eqref{Definition:DynamicSBP}, we first need to identify a proper \textit{control class}. \\

\noindent \textbf{Definition. }We define $\cU$ to be the class of triples $(u,\theta,\xi)$ such that 
\begin{enumerate}
    \item $u:(0,T)\times\R^d\times\rS \to \R$, $\theta:(0,T)\times\R^d\times\rS\times\R^\ell \to (-\infty, 1)$, and $\xi:(0,T)\times\R^d\times\rS\times\R_+ \to (0,\infty)$ are measurable functions.
    \item For every $i\in\rS$ and compact set $K\subseteq\R^d$, \begin{equation*}
        \int_0^T\sup_{x\in K}\abs{u(t,x,i)}^2dt<\infty\textup{ and }\int_0^T\sup_{x\in K,z\in\R^\ell}\frac{\abs{\theta(t,x,i,z)}^2}{1\wedge\abs{z}^2}dt<\infty,
    \end{equation*}
    and for every $(t,x)\in(0,T)\times\R^d$, $w\mapsto1-\xi(t,x,i,w)$ is supported on a compact interval, denoted by $I_{t,x}$, and \begin{equation*}
        \int_0^T\sup_{x\in K,w\in I_{t,x}}\abs{1-\xi(t,x,i,w)}^2dt<\infty.
    \end{equation*} 
    \item The above two properties on $(u,\theta,\xi)$ guarantee that all the (stochastic) integrals in \eqref{girsanov zt} are well defined with $t=0$ and $s=T$, in which case \eqref{girsanov zt} yields  $Z_{0,T}^{(u,\theta,\xi)}$. Further, we assume $(u,\theta,\xi)$ is such that $\bE_{\bP^0}[Z_{0,T}^{(u,\theta,\xi)}]=1$, and moreover, if $\bP^{(u,\theta,\xi )}:=Z_{0,T}^{(u,\theta,\xi)}\bP^0$ is the Girsanov transform from $\bP^0$ by $(u,\theta,\xi)$, and $L^{(u,\theta,\xi)}$ is the operator as in \eqref{Girsanov generator}, then the following is a $\bP^{(u,\theta,\xi)}$-martingale:
    \[
    \set{-\log\varphi(t,X_t,\Lambda_t)-\int_0^t\bigg(\frac{\partial}{\partial r}+L^{(u,\theta,\xi)}\bigg)(-\log\varphi)(r,X_r,\Lambda_r)dr:t\in[0,T]}.
    \]
\end{enumerate}

Given $(u,\theta,\xi)\in\cU$, let us revisit the SDE \eqref{Girsanov SDE - bQ,u,theta,theta1}, to which we now refer as the \textit{controlled SDE}, viewing it as the variation of the original SDE \eqref{Reference SDE general theory} under the control of $(u,\theta,\xi)$. By the analysis in the previous subsection, we know that, for every $(u,\theta,\xi)\in\cU$, the canonical process $\set{(X_t,\Lambda_t)}_{[0,T]}$ under $\bP^{(u,\theta,\xi)}$ is a strong Markov process satisfying the controlled SDE \eqref{Girsanov SDE - bQ,u,theta,theta1}, and with the generator given by $L^{(u,\theta,\xi)}$. Below, we will write $(u,\theta,\xi)\in\cU(\rho_T)$ if $(u,\theta,\xi)\in\cU$ and $(X_T,\Lambda_T)$ has distribution $\rho_T$ under $\bP^{(u,\theta,\xi)}$.\\

Returning to the SBP in \eqref{Definition:DynamicSBP}, we will prove in this subsection that, under Assumption \ref{asm : log varphi C12b}, determining the Schr\"odinger bridge $\widehat\bP$ is equivalent to solving the following \textit{Stochastic Control Problem} (SCP).
\begin{dfn}\label{Definition: stochastic control SBP - general}
Under the setting above, the stochastic control formulation of the SBP \eqref{Definition:DynamicSBP} is to determine
\begin{equation}\label{Equation: stochastic control objective}
\begin{aligned}
    &(u^*,\theta^*,\xi^*)\\
    &\qquad:= \arg\min_{(u,\theta,\xi) \in \cU(\rho_T)} \bE_{\bP^{(u,\theta,\xi)}}\bigg[\int_0^T\bigg(\frac{1}{2}\abs{u(t,X_t,\Lambda_t)}^2\\
    &\hspace{1cm}+\int_{\R^\ell}\brac{(1-\theta(t,X_t,\Lambda_t,z))\log(1-\theta(t,X_t,\Lambda_t,z))+\theta(t,X_t,\Lambda_t,z)}\nu(dz)\\
    &\hspace{1cm}+\int_{\R_+} \brac{\xi(t,X_t,\Lambda_t,w)\log \xi(t,X_t,\Lambda_t,w)+1-\xi(t,X_t,\Lambda_t,w)}\leb(dw)\bigg)dt\bigg].
\end{aligned}    
\end{equation}
\end{dfn}
\noindent As remarked at the end of the previous subsection, the minimization quantity in \eqref{Equation: stochastic control objective} is in fact equal to $\KL{\bP^{(u,\theta,\xi)}}{\bP^0}$ according to \eqref{Equation: KL divergence Girsanov representation}, whenever the stochastic integrals in the right-hand side of \eqref{eq: ito formula on logZ} are martingales under $\bP^{(u,\theta,\xi)}$. 

The SCP \eqref{Equation: stochastic control objective} has a terminal distribution contraint $\rho_T$. Instead of solving this SCP directly, we consider an ``unconstrained" variant of the SCP, which we may tackle by standard stochastic control techniques\footnote[2]{Expanding on ideas from the diffusion SBP case \cite{daipra1991stochastic,pra1990markov}, we use dynamic programming to approach the problem and we use Fleming's logarithmic transform \cite{fleming1982logarithmic} to solve the HJB equation.}. Namely, given $(u,\theta,\xi)\in\cU$, we define, for $t\in(0,T)$,
\begin{equation*}
\begin{aligned}
        F(t,u,\theta,\xi) :=\frac{1}{2}|u|^2+\int_{\R^\ell}\brac{(1-\theta)\log(1-\theta)+\theta}\nu(dz)
            +\int_{\R_+}\pran{\xi\log  \xi+1- \xi}\leb(dw),
\end{aligned} 
\end{equation*}
where $u=u(t,X_t,\Lambda_t)$,  $\theta=\theta(t,X_t,\Lambda_{t},z)$, and $\xi=\xi(t,X_{t},\Lambda_{t},w)$. We will write $F(t,u,\theta,\xi)(x,i)$ if $(X_t,\Lambda_t)$ are replaced by deterministic $(x,i)$ everywhere in the definition above. 
\begin{dfn}
Under the setting above, the unconstrained stochastic control formulation of the SBP \eqref{Definition:DynamicSBP} is to determine, for every $(t,x,i)\in [0,T] \times\R^d \times \rS$, 
\begin{equation}\label{Equation: Objective function - unconstrained}
\begin{aligned}
    &J^{*}(t,x,i)\\
    &\qquad:=\min_{(u,\theta,\xi)\in\cU} \bE_{\bP^{(u,\theta,\xi)}} \brac{\int_t^T F(r,u,\theta,\xi) dr - \log \g(X_T,\Lambda_T) \bigg| (X_t,\Lambda_t)=(x,i)},
\end{aligned}
\end{equation}
as well as the corresponding minimizer $(u^*,\theta^*,\xi^*)$.
\end{dfn}
\noindent We will solve the unconstrained SCP \eqref{Equation: Objective function - unconstrained} below, and, as we will prove later, the solution to \eqref{Equation: Objective function - unconstrained} coincides with the one sought in the constrained SCP \eqref{Definition: stochastic control SBP - general}. \\

The method of solving \eqref{Equation: Objective function - unconstrained} is standard. We will outline the steps here for the sake of completeness. 
We begin by writing down the Hamilton-Jacobi-Bellman (HJB) equation for \eqref{Equation: Objective function - unconstrained}, and that is, for every $(t,x,i)\in(0,T)\times\R^d \times \rS$, 
\begin{equation}\label{Equation: HJB}
    \begin{cases}
    \frac{\partial}{\partial t}J^*(t,x,i)+\inf_{(u,\theta,\xi)\in\cU}\set{L^{(u,\theta,\xi)}J^{*}(t,x,i)+F(t,u,\theta,\xi)(x,i)}=0,\\
    J^{*}(T,x,i)= -\log \g(x,i). \vspace{0.1cm}
    \end{cases}
\end{equation}
Let $(t,x,i)\in(0,T)\times\R^d \times \rS$ be fixed. When there is no ambiguity, we may omit writing $(t,x,i)$ as the argument of the controls $(u,\theta,\xi)$ and the coefficients $\sigma,\gamma,\alpha,
\beta$. Recall that $L^{(u,\theta,\xi)}$ is given by \eqref{Girsanov generator}.
Thus, expanding the terms inside the infimum expression of \eqref{Equation: HJB}, we obtain the following minimization problems:
\begin{equation*}
    \begin{cases}
        \displaystyle \inf_{u}\set{-\sigma u\cdot\nabla J^{*}+\frac{1}{2}\abs{u}^2},\\
        \displaystyle \inf_{\theta}\set{\int_{\R^\ell}\brac{-\theta(J^{*}(t,x+\gamma,i)-J^{*}(t,x,i))+\log(1-\theta)(1-\theta)+\theta}\nu(dz)},\\
        \displaystyle \inf_{\xi}\set{\int_{\R_+}\brac{(\xi-1)(J^{*}(t,x+\beta,i+\alpha)-J^{*}(t,x,i))+\xi\log \xi+1-\xi} \leb(dw)}.
    \end{cases}
\end{equation*}
Given $J^*(t,x,i)$ and $\nabla J^*(t,x,i)$, we can easily solve the above system for its unique global minimizers, and they are given by:
\begin{equation}\label{Equation: HJB minimizers}
   \begin{cases}
u^*(t,x,i)=\sigma^\top\nabla J^{*}(t,x,i),\\
\theta^*(t,x,i,z) = 1-\exp\pran{J^{*}(t,x,i)-J^{*}(t,x+\gamma(t,x,i,z),i)},\\
\xi^*(t,x,i,w)=\exp\pran{J^{*}(t,x,i)-J^{*}(t,x+\beta,i+\alpha)}.
\end{cases} 
\end{equation}

Next, we need to find the function $(t,x,i)\mapsto J^{*}(t,x,i)$ such that the equation in \eqref{Equation: HJB} is indeed satisfied with the above minimizers, meaning 
\begin{equation}\label{eq:HJB for J^* at minimizer}
    \pran{\frac{\partial}{\partial t}+L^{(u^*,\theta^*,\xi^*)}}J^{*}(t,x,i)+F(t,u^*,\theta^*,\xi^*)(x,i)=0
\end{equation}
for every $(t,x,i)\in(0,T)\times\R^d\times\rS$. To this end, we first note that, by \eqref{Girsanov generator},
\begin{equation*}
    \begin{aligned}
       \pran{\frac{\partial}{\partial t}+L^{(u^*,\theta^*,\xi^*)}}&J^{*}(t,x,i)\\
       &=\pran{\frac{\partial}{\partial t}+L}J^{*}(t,x,i)- (\sigma u^* \cdot \nabla J^*)(t,x,i)\\
       &\hspace{1.5cm}- \int_{\R^\ell}\theta^*(t,x,i,z)(J^*(t,x+\gamma,i)-J^*(t,x,i))\nu(dz)\\
       &\hspace{1.5cm}+\int_{\R_+}(\xi^*(t,x,i,w)-1)(J^*(t,x+\beta,i+\alpha)-J^*(t,x,i)) \leb(dw). 
    \end{aligned}
\end{equation*}
Thus, plugging in the values from \eqref{Equation: HJB minimizers}, \eqref{eq:HJB for J^* at minimizer} is reduced to
\begin{equation*}
\begin{aligned}
    &\pran{\frac{\partial}{\partial t}+L}J^{*}(t,x,i)\\
    &\qquad= \frac{1}{2}|\sigma^\top\nabla J^{*}|^2(t,x,i)\\& \hspace{1cm}+ \int_{\R^\ell} \pran{e^{J^{*}(t,x,i)-J^{*}(t,x+\gamma(t,x,i,z),i)}+J^{*}(t,x+\gamma(t,x,i,z),i)-J^{*}(t,x,i) -1}\,\nu(dz) \\
    &\hspace{2cm}+ \sum_{j\in\rS} Q_{ij}(t,x)\pran{e^{J^{*}(t,x,i)-J^{*}(t,\psi_{ij}(t,x),j)}+J^{*}(t,\psi_{ij}(t,x),j)-J^{*}(t,x,i)-1},
\end{aligned} 
\end{equation*}which, according to \eqref{-log phi identity}, has a solution $J^*(t,x,i)=-\log \varphi(t,x,i)$. Further, by \eqref{Definition: general varphi}, $J^*(T,x,i)=-\log\varphi(T,x,i)= -\log \g(x,i)$, which means that the boundary condition in \eqref{Equation: HJB} is also satisfied. In other words, taking $J^*$ to be $-\log \varphi$ in \eqref{Equation: HJB minimizers}, we have obtained a candidate solution $(u^*,\theta^*,\xi^*)$ to the SCP \eqref{Equation: Objective function - unconstrained}. We now prove it is indeed the optimizer.
\begin{thm}\label{Theorem : optimal controls}
    Suppose that Assumption \textup{\ref{asm : log varphi C12b}} holds. Then, the solutions to the SCPs \eqref{Equation: stochastic control objective} and \eqref{Equation: Objective function - unconstrained} coincide, both of which are given by:
    \begin{equation}\label{Equation: general theory - optimal controls}
        \begin{dcases}
        u^*(t,x,i)=-\sigma^\top\nabla\log \varphi(t,x,i)\\
        \theta^*(t,x,i,z) = 1-\frac{\varphi(t,x+\gamma(t,x,i,z),i)}{\varphi(t,x,i)}\\
        \xi^*(t,x,i,w) = \frac{\varphi(t,x+\beta(t,x,i,w),i+\alpha(t,x,i,w))}{\varphi(t,x,i)}
        \end{dcases}
    \end{equation}
for $(t,x,i)\in(0,T)\times\R^d\times\rS$, $z\in\R^\ell$ and $w\in\R_+$.

In particular, with $(u^*,\theta^*,\xi^*)$ given by \eqref{Equation: general theory - optimal controls}, $\bP^{(u^*,\theta^*,\xi^*)}=\widehat\bP$. That is, the solution to the SBP \eqref{Definition:DynamicSBP} can be obtained from solving the associated SCP \eqref{Equation: stochastic control objective} or \eqref{Equation: Objective function - unconstrained}. 
\end{thm}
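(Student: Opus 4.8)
The plan is to run the standard dynamic-programming verification for the unconstrained control problem \eqref{Equation: Objective function - unconstrained}, using the candidate value function $J^{*}=-\log\varphi$ and the candidate control \eqref{Equation: general theory - optimal controls} that were already produced above, and then to deduce the constrained problem \eqref{Equation: stochastic control objective} from it by exploiting the terminal-law constraint. Note that the Remark following Assumption \ref{asm : log varphi C12b} already records that under \ref{asm : log varphi C12b} one has $\bE_{\bP^0}[Z^{(u^{*},\theta^{*},\xi^{*})}_{0,T}]=1$, $Z^{(u^{*},\theta^{*},\xi^{*})}_{0,T}=\varphi(T,X_T,\Lambda_T)/\varphi(0,X_0,\Lambda_0)$, and $\bP^{(u^{*},\theta^{*},\xi^{*})}=\widehat\bP$; so the last assertion of the theorem comes for free, and part of the work is to upgrade these facts into the statement $(u^{*},\theta^{*},\xi^{*})\in\cU(\rho_T)$.

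\emph{Step 1 (admissibility).} I would first check $(u^{*},\theta^{*},\xi^{*})\in\cU$: measurability and the range requirements $\theta^{*}<1$, $\xi^{*}>0$ follow from $\varphi\in C^{1,2}$ and $\varphi>0$ on $(0,T)\times\R^d\times\rS$; the function $w\mapsto 1-\xi^{*}(t,x,i,w)$ vanishes off $\bigcup_{j}\Delta_{ij}(t,x)\subseteq[0,\textstyle\sum_j Q_{ij}(t,x)]$, hence is compactly supported; and the local square-integrability in $t$ of $u^{*}$, $\theta^{*}/(1\wedge|z|)$ and $1-\xi^{*}$ is obtained from Assumption \ref{asm : log varphi C12b} through mean-value estimates that bound these quantities by $\nabla\log\varphi$ and the ratios $\varphi(t,\cdot,j)/\varphi(t,\cdot,i)$, using the local boundedness of $\gamma,\beta,\psi$ from \ref{Assumptions C} to confine the spatial arguments to compact sets. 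The normalization $\bE_{\bP^0}[Z^{(u^{*},\theta^{*},\xi^{*})}_{0,T}]=1$ is from the Remark, and the required $\bP^{(u^{*},\theta^{*},\xi^{*})}$-martingale property of $\{-\log\varphi(t,X_t,\Lambda_t)-\int_0^t(\tfrac{\partial}{\partial r}+L^{(u^{*},\theta^{*},\xi^{*})})(-\log\varphi)\,dr\}$ follows by applying It\^o's formula to $-\log\varphi(t,X_t,\Lambda_t)$ under $\widehat\bP$, recalling $L_{\widehat\bP}=L^{(u^{*},\theta^{*},\xi^{*})}$ by \cref{Theorem: General P-hat} and that the drift term is integrable by \ref{asm : log varphi C12b}. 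Since $(X_T,\Lambda_T)\sim\rho_T$ under $\widehat\bP=\bP^{(u^{*},\theta^{*},\xi^{*})}$ (again \cref{Theorem: General P-hat}), this gives $(u^{*},\theta^{*},\xi^{*})\in\cU(\rho_T)$.

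\emph{Step 2 (verification of the unconstrained SCP).} Fix any $(u,\theta,\xi)\in\cU$. Its defining martingale property, together with the strong Markov property under $\bP^{(u,\theta,\xi)}$, gives upon conditioning on $(X_t,\Lambda_t)=(x,i)$ that $\bE_{\bP^{(u,\theta,\xi)}}[-\log\varphi(T,X_T,\Lambda_T)\mid(X_t,\Lambda_t)=(x,i)]$ equals $-\log\varphi(t,x,i)$ plus the conditional expectation of $\int_t^T(\tfrac{\partial}{\partial r}+L^{(u,\theta,\xi)})(-\log\varphi)\,dr$. As $\bP^{(u,\theta,\xi)}\ll\bP^0$ and $\bP^0_T\ll\bR_T$, the equality $\varphi(T,\cdot)=\g$ from \ref{asm : log varphi C12b} holds $\bP^{(u,\theta,\xi)}$-a.s., so the left side is $-\bE_{\bP^{(u,\theta,\xi)}}[\log\g(X_T,\Lambda_T)\mid(X_t,\Lambda_t)=(x,i)]$. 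Adding the conditional expectation of $\int_t^T F(r,u,\theta,\xi)\,dr$ to both sides and invoking the pointwise inequality
\begin{equation*}
\Big(\tfrac{\partial}{\partial r}+L^{(u,\theta,\xi)}\Big)(-\log\varphi)+F(r,u,\theta,\xi)\ \geq\ \Big(\tfrac{\partial}{\partial r}+L^{(u^{*},\theta^{*},\xi^{*})}\Big)(-\log\varphi)+F(r,u^{*},\theta^{*},\xi^{*})\ =\ 0,
\end{equation*}
in which the inequality is the global minimality of \eqref{Equation: HJB minimizers} in the three convex subproblems and the equality is \eqref{eq:HJB for J^* at minimizer}, one concludes that the objective of \eqref{Equation: Objective function - unconstrained} is $\geq-\log\varphi(t,x,i)$, with equality precisely at $(u^{*},\theta^{*},\xi^{*})$ — uniqueness of the minimizers in \eqref{Equation: HJB minimizers} forces $(u,\theta,\xi)=(u^{*},\theta^{*},\xi^{*})$ off a $dr\otimes\bP^{(u^{*},\theta^{*},\xi^{*})}$-null set. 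Hence $J^{*}=-\log\varphi$ and \eqref{Equation: general theory - optimal controls} is its unique minimizer.

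\emph{Step 3 (constrained SCP and main obstacle).} For $(u,\theta,\xi)\in\cU(\rho_T)$ the terminal law is $\rho_T$, so $\bE_{\bP^{(u,\theta,\xi)}}[\log\g(X_T,\Lambda_T)]=\int\log\g\,d\rho_T$ is a constant; moreover $Z^{(u,\theta,\xi)}_{0,T}$ is a nonnegative $\bP^0$-local martingale of $\bP^0$-mean $1$, hence a martingale, so $\bE_{\bP^0}[Z^{(u,\theta,\xi)}_{0,T}\mid\cF_0]=1$ and $(X_0,\Lambda_0)\sim\rho_0$ under $\bP^{(u,\theta,\xi)}$. Taking expectations in the Step 2 bound over the initial point, the objective of \eqref{Equation: stochastic control objective} equals $\bE_{\bP^{(u,\theta,\xi)}}[\int_0^T F\,dr-\log\g(X_T,\Lambda_T)]+\int\log\g\,d\rho_T\geq\int(-\log\varphi(0,\cdot))\,d\rho_0+\int\log\g\,d\rho_T$, which is independent of the control and is attained uniquely at $(u^{*},\theta^{*},\xi^{*})\in\cU(\rho_T)$. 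This identifies the minimizers of \eqref{Equation: stochastic control objective} and \eqref{Equation: Objective function - unconstrained}, and $\bP^{(u^{*},\theta^{*},\xi^{*})}=\widehat\bP$ was noted in Step 1. The real obstacle is not the verification inequality, which is the usual HJB comparison, but handling $J^{*}=-\log\varphi$: it is only of class $C^{1,2}$ and is not compactly supported, so the integro-differential and stochastic-integral terms produced by It\^o's formula must genuinely be controlled; this is exactly why $\cU$ is defined to build in both the local square-integrability bounds and the $\bP^{(u,\theta,\xi)}$-martingale property of the $-\log\varphi$ process, and verifying that the explicit candidate \eqref{Equation: general theory - optimal controls} meets these requirements — which rests on Assumption \ref{asm : log varphi C12b} — is where the actual work lies.
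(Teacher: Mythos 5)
Your proposal is correct and follows essentially the same route as the paper's proof: admissibility of the candidate triple and the identification $\bP^{(u^*,\theta^*,\xi^*)}=\widehat\bP$ via the remark under Assumption \ref{asm : log varphi C12b} and \cref{Theorem: General P-hat}, then the HJB verification inequality combined with the martingale property built into the definition of $\cU$ for the unconstrained SCP, and finally the constrained SCP by noting that $\bE_{\bP^{(u,\theta,\xi)}}[\log\g(X_T,\Lambda_T)]$ is the same constant for all controls in $\cU(\rho_T)$. Your extra remarks (that $\varphi(T,\cdot)=\g$ holds $\bP^{(u,\theta,\xi)}$-a.s.\ by absolute continuity, and that the initial law $\rho_0$ is preserved since $\bE_{\bP^0}[Z^{(u,\theta,\xi)}_{0,T}\mid\cF_0]=1$) only make explicit points the paper uses implicitly.
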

\begin{proof}
Clearly, under Assumption \ref{asm : log varphi C12b}, the triple $(u^*,\theta^*,\xi^*)$ given by \eqref{Equation: general theory - optimal controls} satisfies the first two conditions in the definition of $\cU$. Moreover, by the remark below Assumption \ref{asm : log varphi C12b}, the third condition of $\cU$ also holds for $(u^*,\theta^*,\xi^*)$ due to the relation \eqref{eq: Ito formula applied to -log(varphi)} with $t=0,s=T$. Therefore, we have that $(u^*,\theta^*,\xi^*)\in\cU$. Since \eqref{Equation: general theory - optimal controls} coincides with \eqref{P-hat Girsanov triple}, the last statement of \cref{Theorem : optimal controls} follows immediately from \cref{Theorem: General P-hat}. In addition, we also have that $(u^*,\theta^*,\xi^*)\in\cU(\rho_T)$. 

We now prove that $(u^*,\theta^*,\xi^*)$ is indeed the minimizer for the unconstrained SCP \eqref{Equation: Objective function - unconstrained}.  It is easy to see that Assumption \ref{asm : log varphi C12b} endows $-\log\varphi$ with sufficient regularity so that the analysis performed above is valid. Therefore, we know that $-\log\varphi$ is a solution to the HJB equation \eqref{Equation: HJB}, which means that for every $(u,\theta,\xi)\in\cU$, \[\pran{\frac{\partial}{\partial t}+L^{(u,\theta,\xi)}}(-\log\varphi)(t,x,i) \geq -F(t,u,\theta,\xi)(x,i),\] with equality if and only if $(u,\theta,\xi)=(u^*,\theta^*,\xi^*)$. Meanwhile, with $\set{(X_t,\Lambda_t)}_{[0,T]}$ being the canonical process under $\bP^{(u,\theta,
\xi)}$, by the third condition in the definition of $\cU$, we have that for every $(t,x,i)\in[0,T]\times\R^d\times\rS$,
\begin{equation*}
\begin{aligned}
    &\bE_{\bP^{(u,\theta,\xi)}}\brac{-\log \g(X_T,\Lambda_T)|(X_t,\Lambda_t)=(x,i)}\\
    &\hspace{0.6cm}=-\log \varphi(t,x,i) \\
    &\hspace{1.5cm}+ \bE_{\bP^{(u,\theta,\xi)}}\brac{\int_t^{T} \pran{\frac{\partial}{\partial r}+L^{(u,\theta,\xi)}}(-\log \varphi)(r,X_r,\Lambda_r)dr \bigg| (X_t,\Lambda_t)=(x,i)}.
\end{aligned}
\end{equation*}
Combining this with the inequality above, we have that for every $(u,\theta,\xi)\in\cU$,
\begin{equation}\label{Equation : unconstrained objective lower bound}
    - \log \varphi(t,x,i)  \leq \bE_{\bP^{(u,\theta,\xi)}}\brac{\int_t^{T} F(r,u,\theta,\xi)dr -\log \g(X_T,\Lambda_T)\bigg|(X_t,\Lambda_t)=(x,i)},
\end{equation}
with equality if and only if $(u,\theta,\xi)=(u^*,\theta^*,\xi^*)$. Since the RHS of \eqref{Equation : unconstrained objective lower bound} is exactly the cost function in \eqref{Equation: Objective function - unconstrained}, we conclude that $(u^*,\theta^*,\xi^*)$ is indeed optimal for this SCP and $J^*(t,x,i)=- \log \varphi(t,x,i) $. 

Next, we turn our attention to the constrained SCP \eqref{Equation: stochastic control objective} and show that its solution is also given by  $(u^*,\theta^*,\xi^*)$. First, by taking $t=0$ in \eqref{Equation : unconstrained objective lower bound} and invoking the equality at $(u^*,\theta^*,\xi^*)$, we have that for every $(u,\theta,\xi)\in\cU$,
\begin{equation*}
\begin{aligned}
    \bE_{\bP^{(u^*,\theta^*,\xi^*)}}&\brac{\int_0^{T} F(r,u^*,\theta^*,\xi^*)dr -\log \g(X_T,\Lambda_T)}\\
    &\hspace{3cm}\leq \bE_{\bP^{(u,\theta,\xi)}}\brac{\int_0^{T} F(r,u,\theta,\xi)dr -\log \g(X_T,\Lambda_T)}.
\end{aligned}
\end{equation*}
Further, if $(u,\theta,\xi)\in \cU(\rho _T)$, i.e., $(X_T,\Lambda_T)$ has distribution $\rho_T$ under $\bP^{(u,\theta,\xi)}$, then 
\[
\bE_{\bP^{(u^*,\theta^*,\xi^*)}}[\log \g(X_T,\Lambda_T)]=\bE_{\bP^{(u,\theta,\xi)}}[\log \g(X_T,\Lambda_T)].
\]
Therefore, we can conclude that for every $(u,\theta,\xi)\in\cU(\rho_T)$,
\[
\bE_{\bP^{(u^*,\theta^*,\xi^*)}}\brac{\int_0^{T} F(r,u^*,\theta^*,\xi^*)dr}\leq \bE_{\bP^{(u,\theta,\xi)}}\brac{\int_0^{T} F(r,u,\theta,\xi)dr},
\]
meaning that $(u^*,\theta^*,\xi^*)$ is indeed the solution to the SCP \eqref{Equation: stochastic control objective}.
\end{proof}

\noindent \textbf{Remark. }We end this subsection on a remark regarding $\xi$, the control on regime switching. By \cref{Theorem : optimal controls}, the optimal $\xi^*$ for the SCPs \eqref{Equation: stochastic control objective} and \eqref{Equation: Objective function - unconstrained} is such that, for every $(t,x,i)\in(0,T)\times\R^d\times\rS$, $w\mapsto\xi^*(t,x,i,w)$ is constant on the interval $\Delta_{ij}(t,x)$ for every $j\in \rS$. In fact, this is no coincidence due to the following observation. Fix $(t,x)\in[0,T]\times\R^d$. Given a matrix $C=(C_{ij})_{i,j\in\rS}$ with $C_{ii}=0$ and $C_{ij}>0$ for $i\neq j$, if $(u,\theta,\xi)\in\cU$ is such that \[\int_{\Delta_{ij}(t,x)}\xi(t,x,i,w)\leb(dw)=Q_{ij}(t,x)C_{ij}\textup{ for }i,j\in \rS,\]then, by the definition of $L^{(u,\theta,\xi)}$ in \eqref{Girsanov generator}, the regime-switching rate governed by $\xi$ under $\bP^{(u,\theta,\xi)}$ is entirely determined by $C$. On the other hand, the function $y \mapsto y\log y +1 -y$ is convex, and hence Jensen's inequality yields that
\begin{align*}
    \int_{\Delta_{ij}(t,x)}\big(\xi\log\xi+1-\xi\big)(t,x,i,w)\,\leb(dw) \geq Q_{ij}(t,x)(C_{ij} \log C_{ij} +1 - C_{ij}),
\end{align*}
with equality if and only if $\xi(t,x,i,\cdot)\equiv C_{ij}$ on $\Delta_{ij}(t,x)$. Therefore, observing the cost functions in \eqref{Equation: stochastic control objective} and \eqref{Equation: Objective function - unconstrained}, we see that the concerned function $F(t,u,\theta,\xi)$ will always be minimized by $\xi$ that is constant on every interval $\Delta_{ij}(t,x)$. Such $\xi$ naturally takes a matrix form $(\xi_{ij}(t,x))_{i,j\in\rS}$ where $\xi_{ij}(t,x)$ is the constant value of $\xi$ on $\Delta_{ij}(t,x)$. With this form of $\xi$, we can observe more clearly the role of $\xi$ in controling the regime-switching rate: from regime $i$ to regime $j$, the switching rate under $\bP^{(u,\theta,\xi)}$ is exactly $\xi_{ij}(t,x) Q_{ij}(t,x)$.\\

By restricting $
\xi$ to the aforementioned matrix form, we now have an equivalent formulation of the SCP \eqref{Equation: stochastic control objective}: we aim to determine
\begin{equation*}
\begin{aligned}
    &(u^*,\theta^*,\xi^*)\\
    &\qquad:= \arg\min_{\big(u,\theta,\xi =\set{\xi_{ij}}_{i,j\in\rS}\big) \in \cU(\rho_T)} \bE_{\bP^{(u,\theta,\xi)}}\bigg[\int_0^T\bigg(\frac{1}{2}\abs{u(t,X_t,\Lambda_t)}^2\\
    &\hspace{1cm}+\int_{\R^\ell}\brac{(1-\theta(t,X_t,\Lambda_t,z))\log(1-\theta(t,X_t,\Lambda_t,z))+\theta(t,X_t,\Lambda_t,z)}\nu(dz)\\
    &\hspace{1.5cm}+\sum_{j\in\rS} Q_{\Lambda_t\,j}(t,X_t)\brac{\xi_{\Lambda_t\,j}(t,X_t)\log \xi_{\Lambda_t\,j}(t,X_t)+1-\xi_{\Lambda_t\,j}(t,X_t)}\bigg)dt\bigg].
\end{aligned}    
\end{equation*}
The solution $(u^*,\theta^*,\xi^*)$ to the above SCP is identical to the triple given in \eqref{Equation: general theory - optimal controls} with $\xi^*$ written in the matrix form as
\[
 \xi_{ij}^*(t,x) = \frac{\varphi(t,\psi_{ij}(t,x),j)}{\varphi(t,x,i)}\textup{ for }i,j\in\rS.
\]
\section{Under the Presence of a Transition Density Function}\label{Section: densities}
In this section, we will consider the situation when the solution to the SDE \eqref{Reference SDE general theory} admits a transition density function, in which case the Schr\"odinger bridge of the corresponding SBP \eqref{Definition:DynamicSBP} is more tractable, and additional information on its dynamics becomes accessible. 

Let $\bR,\rho_0,\rho_T$ be the same as before, $\bR_0,\bR_T$ be the initial and the terminal data, $\bR_{0T}$ be the joint endpoint distribution, and $\set{(X_t,\Lambda_t)}_{[0,T]}$ be the canonical process under $\bR$. Recall that we can write the Schr\"odinger bridge $\widehat\bP$ as  $\f(X_0,\Lambda_0)\g(X_T,\Lambda_T)\bR$, where $\f,\g$ are a solution to the static Schr\"odinger system \eqref{Equation: fg Schrodinger System with lambda} and $\f,\g$ are entirely determined by $\bR_{0T}$ and $\rho_0,\rho_T$. Throughout this section, in addition to the universal Assumptions \ref{Assumptions C} and \ref{Assumptions A}, we also impose the following ``density'' assumption.\\

\paragraph{\textbf{Assumption (D)}}\customlabel{Assumptions D}{\textbf{(D)}}
\begin{enumerate}[label=(\textbf{D\arabic*})]
\item \label{Assumption: transition densities} 
The reference measure $\bR$ admits a transition density function in the sense that, for every $(x,i)\in\R^d\times\rS$ and $0\leq t<s\leq T$, there exists a function  $(y,j)\in\R^d\times\rS\mapsto p_{ij}(t,x,s,y)$ such that for every Borel $B\subseteq\R^d$,
\begin{equation*}
    \bR(X_s\in B, \Lambda_s=j | (X_t, \Lambda_t)=(x,i))= \int_B p_{ij}(t,x,s,y)dy,
\end{equation*}
where $\set{(X_t,\Lambda_t)}_{[0,T]}$ is the canonical process under $\bR$. Moreover, for every $i,j\in\rS$ and $0\leq t<s\leq T$, the function $(x,y)\mapsto p_{ij}(t,x,s,y)$ is bounded on $\R^d\times\R^d$.
\item \label{Assumption: endpoint densities}  
$\bR_{0T},\rho_0,\rho_T$ are such that, for  every $i,j\in\rS$, \[
\int_{\R^d}\f(x,i)\bR_0(dx,i)<\infty\;\textup{ and }\sup_{y\in\textup{supp}(\bR_T(\cdot,j))}\g(y,j)<\infty. 
\]
That is, $\f(\cdot,i)$ is integrable under $\bR_0(\cdot,i)$ and $\g(\cdot,j)$ is bounded under $\bR_T(\cdot,j))$.
\end{enumerate}
Note that Assumption \ref{Assumption: transition densities} implies that the terminal data $\bR_T$ admits a density function
\[
\bR_T(y,j):=\sum_{i\in\rS}\int_{\R^d}p_{ij}(0,x,T,y)\bR_0(dx,i)\textup{ for }y\in\R^d,j\in\rS.
\]
Further, since $\rho_T$ is absolutely continuous with respect to $\bR_T$, we also have the existence of the density function $\rho_T(y,j)$. Finally, under Assumption \ref{Assumption: endpoint densities}, $\f(,i)$ is finite  $\bR_0(\cdot,i)$-a.e., and since the values of $\g(\cdot,j)$ outside of $\textup{supp}(\bR_T(\cdot,j))$ have no impact on the SBP, we may and will assume $\g(\cdot,j)$ is bounded on $\R^d$.\\
\subsection{The Schr\"odinger Potentials}
Under Assumption \ref{Assumptions D}, the definition \eqref{Definition: general varphi} of the function $\varphi$ can be rewritten as, for $(t,x,i)\in[0,T)\times\R^d\times\rS$,
\begin{equation}\label{Equation:phi-definition}
    \varphi(t,x,i):=\sum_{j\in\rS}\int_{\R^d}\g(y,j)p_{ij}(t,x,T,y)dy,
\end{equation}
with $\varphi(T,x,i):=\g(x,i)$. Note that $\varphi(t,x,i)$ is finite valued for every $i\in\rS$ and every ($\bR_0$-a.e., if $t=T$) $x\in\R^d$. We further define, for $(s,y,j)\in(0,T]\times\R^d \times \rS$,
\begin{equation}\label{Equation:phihat-definition}
    \widehat\varphi(s,y,j):=\sum_{i\in\rS}\int_{\R^d}\f(x,i)p_{ij}(0,x,s,y)\bR_0(dx,i),
\end{equation}
and $\widehat\varphi(0,dy,j):=\f(y,j)\bR_0(dy,j)$ as a measure on $\R^d$. Again, Assumption \ref{Assumptions D} guarantees the well-definedness of  $\widehat\varphi$. 

The pair of functions $(\varphi,\widehat\varphi)$ are called the \textit{Schr\"odinger potentials} associated with the static Schr\"odinger system \eqref{Equation: fg Schrodinger System with lambda}. As we will see below, while the definition of $(\varphi,\widehat\varphi)$ relies on the existence of the transition density function under $\bR$, $(\varphi,\widehat\varphi)$ characterizes the dynamics of the Schr\"odinger bridge in a fundamental way that is independent of the specific structure of the regime-switching jump diffusion which gives rise to $\bR$. We start with rewriting the static Schr\"odinger system \eqref{Equation: fg Schrodinger System with lambda} in terms of $(\varphi,\widehat\varphi)$.
\begin{thm}
    Suppose that Assumption \textup{\ref{Assumptions D}} holds. Let $(\varphi,\widehat\varphi)$ be defined as in \eqref{Equation:phi-definition} and \eqref{Equation:phihat-definition}. Then, the static Schr\"odinger system \eqref{Equation: fg Schrodinger System with lambda} can be rewritten as, for $\bR_0$-a.e. $x\in\R^d$, Lebesgue-a.e. $y\in\R^d$ and every $i,j\in \rS$, 
    \begin{equation}\label{Equation : phi-phihat Schrodinger System, general theory}
    \begin{dcases}
    \varphi(0,x,i)=\sum_{j\in\rS} \int_{\R^d} p_{ij}(0,x,T,y)\varphi(T,y,j)dy, \\
    \widehat\varphi(T,y,j)=\sum_{i\in\rS} \int_{\R^d} p_{ij}(0,x,T,y)\widehat\varphi(0,dx,i), \\
    \rho_0(dx,i)=\varphi(0,x,i)\widehat\varphi(0,dx,i),\\
    \rho_T(y,j)=\varphi(T,y,j)\widehat\varphi(T,y,j).
    \end{dcases}
\end{equation} 
We also refer to \eqref{Equation : phi-phihat Schrodinger System, general theory} as the $(\varphi,\widehat\varphi)$-Schr\"odinger system.
\end{thm}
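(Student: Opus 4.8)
The plan is to unwind the definitions \eqref{Equation:phi-definition} and \eqref{Equation:phihat-definition} of the potentials $(\varphi,\widehat\varphi)$ and to recognize each of the four identities in \eqref{Equation : phi-phihat Schrodinger System, general theory} as a restatement of either a definition or one of the two equations of the original static system \eqref{Equation: fg Schrodinger System with lambda}. Beyond bookkeeping, the only genuine inputs are the disintegration of $\bR_{0T}$ against the transition density $p_{ij}$ supplied by Assumption \ref{Assumption: transition densities}, together with the fact --- recorded in \cref{subsection:preliminaries} --- that $\varphi(0,\cdot,i)$ is finite and strictly positive $\rho_0$-a.e.

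First I would dispatch the top two identities, which are purely definitional. Taking $t=0$ in \eqref{Equation:phi-definition} and using $\varphi(T,\cdot,j)=\g(\cdot,j)$ gives $\varphi(0,x,i)=\sum_{j\in\rS}\int_{\R^d}p_{ij}(0,x,T,y)\varphi(T,y,j)\,dy$, the first line; and since $\widehat\varphi(0,dx,i)=\f(x,i)\bR_0(dx,i)$ by construction, the definition \eqref{Equation:phihat-definition} is verbatim the second line $\widehat\varphi(T,y,j)=\sum_{i\in\rS}\int_{\R^d}p_{ij}(0,x,T,y)\widehat\varphi(0,dx,i)$. Neither of these involves $\rho_0$ or $\rho_T$; they merely encode that $\varphi$ (resp. $\widehat\varphi$) is harmonic (resp. co-harmonic) for the reference transition kernel.

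Next I would handle the bottom two. For the third identity, write $\varphi(0,x,i)\,\widehat\varphi(0,dx,i)=\f(x,i)\,\Esub{\bR}{\g(X_T,\Lambda_T)\mid(X_0,\Lambda_0)=(x,i)}\,\bR_0(dx,i)$ using \eqref{Definition: general varphi}; the first line of \eqref{Equation: fg Schrodinger System with lambda} collapses the bracketed product to $\frac{d\rho_0}{d\bR_0}(x,i)$, so the right-hand side is $\rho_0(dx,i)$. The fourth identity requires one extra step: I claim $\widehat\varphi(T,y,j)=\Esub{\bR}{\f(X_0,\Lambda_0)\mid(X_T,\Lambda_T)=(y,j)}\,\bR_T(y,j)$ for Lebesgue-a.e. $y$. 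Indeed, by Assumption \ref{Assumption: transition densities} the terminal law has density $\bR_T(y,j)=\sum_{i\in\rS}\int_{\R^d}p_{ij}(0,x,T,y)\bR_0(dx,i)$, and the backward disintegration of $\bR_{0T}$ yields $\Esub{\bR}{\f(X_0,\Lambda_0)\mid(X_T,\Lambda_T)=(y,j)}=\bR_T(y,j)^{-1}\sum_{i\in\rS}\int_{\R^d}\f(x,i)p_{ij}(0,x,T,y)\bR_0(dx,i)$, which equals $\widehat\varphi(T,y,j)/\bR_T(y,j)$ by \eqref{Equation:phihat-definition}. Multiplying through by $\varphi(T,y,j)=\g(y,j)$ and invoking the second line of \eqref{Equation: fg Schrodinger System with lambda}, the quantity $\g(y,j)\,\Esub{\bR}{\f(X_0,\Lambda_0)\mid(X_T,\Lambda_T)=(y,j)}$ becomes $\frac{d\rho_T}{d\bR_T}(y,j)=\rho_T(y,j)/\bR_T(y,j)$, so the product telescopes to $\rho_T(y,j)$.

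Finally I would remark that the chain is reversible: given $(\varphi,\widehat\varphi)$ solving \eqref{Equation : phi-phihat Schrodinger System, general theory}, setting $\g:=\varphi(T,\cdot,\cdot)$ and letting $\f(\cdot,i)$ be the density of $\widehat\varphi(0,\cdot,i)$ relative to $\bR_0(\cdot,i)$ recovers a solution of \eqref{Equation: fg Schrodinger System with lambda}, so the two systems are genuinely equivalent. The main (in fact only) obstacle is the careful justification of the backward disintegration formula for $\Esub{\bR}{\f(X_0,\Lambda_0)\mid(X_T,\Lambda_T)=(\cdot)}$ in terms of $p_{ij}$ and the associated null-set bookkeeping; once $\bR_T$ is known to admit the stated density and $\f(\cdot,i)$ is $\bR_0(\cdot,i)$-integrable (Assumption \ref{Assumption: endpoint densities}), this is a routine application of the disintegration theorem, and the rest is algebra.
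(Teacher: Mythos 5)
Your proof is correct and takes essentially the same route as the paper's: the first two identities are treated as definitional, and the last two are obtained by rewriting the two equations of the static system \eqref{Equation: fg Schrodinger System with lambda} through the transition density $p_{ij}$ and then substituting the definitions \eqref{Equation:phi-definition} and \eqref{Equation:phihat-definition} (your explicit backward-disintegration step is exactly the identity the paper uses implicitly when it expresses $\Esub{\bR}{\f(X_0,\Lambda_0)\mid(X_T,\Lambda_T)=(y,j)}\cdot\bR_T(y,j)$ as $\sum_i\int\f(x,i)p_{ij}(0,x,T,y)\bR_0(dx,i)$). The closing remark on reversibility is a harmless addition beyond what the statement requires.
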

\begin{proof}
The first two equations in 
\eqref{Equation : phi-phihat Schrodinger System, general theory} follow directly from the definitions of $\varphi$ and $\widehat\varphi$ and hence require no proof. We only need to verify the last two relations in \eqref{Equation : phi-phihat Schrodinger System, general theory}. Under Assumption \ref{Assumptions D}, the two equations in  \eqref{Equation: fg Schrodinger System with lambda} can be rewritten as, for $\bR_0$-a.e. $x\in\R^d$ and every $i\in\rS$, 
\begin{equation*}
    \begin{aligned}
        \rho_0(dx,i)&=\f(x,i)\,\Esub{\bR}{\g(X_T,\Lambda_T)|(X_0,\Lambda_0)=(x,i)}\,\bR_0(dx,i)\\
        &=\f(x,i)\pran{\sum_{j\in\rS} \int_{\R^d} p_{ij}(0,x,T,y)\g(y,j)dy}\bR_0(dx,i),
    \end{aligned}
\end{equation*}
and for Lebesgue-a.e. $y\in\R^d$ and every $j\in\rS$,
\begin{equation*}
\begin{aligned}
     \rho_T(y,j)&=\g(y,j)\,\Esub{\bR}{\f(X_0,\Lambda_0)|(X_T,\Lambda_T)=(y,j)}\cdot\bR_T(y,j)\\
     &=\g(y,j)\,\sum_{i\in\rS}\int_{\R^d}\f(x,i)p_{ij}(0,x,T,y)\bR_0(dx,i).
\end{aligned}
\end{equation*}
The desired relations follow immediately from plugging in \eqref{Equation:phi-definition} and \eqref{Equation:phihat-definition}.
\end{proof}

\noindent \textbf{Remark. }We remark that the third equation in \eqref{Equation : phi-phihat Schrodinger System, general theory} guarantees that $0<\varphi(0,x,i)<\infty$ for $\rho_0$-a.e. $x\in\R^d$ and every $i\in\rS$. The last equation in \eqref{Equation : phi-phihat Schrodinger System, general theory} implies that, for every $j\in\rS$, $\varphi(T,y,j)\widehat\varphi(T,y,j)<\infty$ for Lebesgue-a.e. (and hence $\rho_T$-a.e.) $y\in\R^d$. Meanwhile, $\varphi(T,y,j)\widehat\varphi(T,y,j)>0$ a.e. under $\rho_T$. We conclude that both $\varphi(T,y,j)$ and $\widehat\varphi(T,y,j)$ are positive and finite for $\rho_T$-a.e. $y\in\R^d$ and every $j\in\rS$. \\

Next, we turn our attention to the Schr\"odinger bridge $\widehat\bP$ solving the SBP \eqref{Definition:DynamicSBP} and recall the formulation \eqref{equation:bP^ as h-transf of bP^0} of $\widehat\bP$. With the help of the Schr\"odinger potentials $(\varphi,\widehat\varphi)$, we now have access to explicit expressions of the transition density function and the marginal density functions under $\widehat\bP$. 

\begin{thm}\label{Theorem: phi*phi-hat product density}
    Suppose that Assumption \textup{\ref{Assumptions D}} holds. Then, the Schr\"odinger bridge  $\widehat\bP$ admits a transition density function $\widehat p$ in the sense of \textup{\ref{Assumption: transition densities}}, and it is given by, for $x,y\in\R^d$, $i,j\in\rS$ and $0\leq t<s\leq T$,
\begin{equation}\label{eq:trans density p^ij}
    \widehat p_{ij}(t,x,s,y):=\begin{cases}
\frac{\varphi(s,y,j)}{\varphi(t,x,i)}p_{ij}(t,x,s,y)&\textup{ if }\varphi(t,x,i)>0,\\
0&\textup{ otherwise.}
    \end{cases}
\end{equation}
Further, for every $t\in(0,T]$, the marginal distribution $\widehat\bP_t$ possesses the density function
\begin{equation}\label{eq:marginal density of P-hat}
    \widehat\bP_t(x,i)=\varphi(t,x,i)\widehat\varphi(t,x,i)\;\textit{ for }(x,i)\in\R^d\times\rS.
\end{equation}
\end{thm}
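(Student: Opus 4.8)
The plan is to obtain both assertions directly from \cref{lem: transition distribution under bP^} and Assumption~\ref{Assumptions D}, with no further stochastic analysis needed; essentially everything reduces to substituting the transition density and then invoking the static Schr\"odinger system.

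The transition density statement is almost immediate. By \cref{lem: transition distribution under bP^}, $\widehat\bP$ is a strong Markov path measure whose transition distributions are $\widehat P_{ij}(t,x,s,dy)=\frac{\varphi(s,y,j)}{\varphi(t,x,i)}P_{ij}(t,x,s,dy)$ when $\varphi(t,x,i)>0$ (and trivial otherwise). By \ref{Assumption: transition densities}, $P_{ij}(t,x,s,dy)=p_{ij}(t,x,s,y)\,dy$, so $\widehat P_{ij}(t,x,s,\cdot)$ is absolutely continuous with respect to Lebesgue measure with density exactly $\widehat p_{ij}(t,x,s,y)$ as in \eqref{eq:trans density p^ij}; on $\{\varphi(t,x,i)=0\}$, which for interior times $t\in(0,T)$ is $\widehat\bP$-negligible, the convention $\widehat p_{ij}:=0$ is consistent with the triviality clause.

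For the marginal density, I would use that $\widehat\bP$ is a Markov path measure with $\widehat\bP_0=\rho_0$, so that for $t\in(0,T]$, $i\in\rS$ and Borel $B\subseteq\R^d$ the Markov property gives $\widehat\bP_t(B,i)=\sum_{i_0\in\rS}\int_{\R^d}\widehat P_{i_0 i}(0,x_0,t,B)\,\rho_0(dx_0,i_0)$. Recalling (from \cref{subsection:preliminaries}) that $\varphi(0,x_0,i_0)>0$ for $\rho_0$-a.e.\ $(x_0,i_0)$, I substitute the density from the previous step and apply Tonelli's theorem — all integrands are non-negative — to get $\widehat\bP_t(B,i)=\int_B\varphi(t,y,i)\big(\sum_{i_0}\int_{\R^d}\frac{p_{i_0 i}(0,x_0,t,y)}{\varphi(0,x_0,i_0)}\rho_0(dx_0,i_0)\big)\,dy$. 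The crux is to recognize the inner integral: the first equation of the static Schr\"odinger system \eqref{Equation: fg Schrodinger System with lambda} can be rewritten (valid precisely because $\varphi(0,\cdot)>0$ $\rho_0$-a.e.) as $\rho_0(dx_0,i_0)=\f(x_0,i_0)\varphi(0,x_0,i_0)\,\bR_0(dx_0,i_0)$, so the factor $\varphi(0,x_0,i_0)$ cancels and the inner integral becomes $\sum_{i_0}\int_{\R^d}\f(x_0,i_0)p_{i_0 i}(0,x_0,t,y)\,\bR_0(dx_0,i_0)=\widehat\varphi(t,y,i)$ by definition \eqref{Equation:phihat-definition}. Hence $\widehat\bP_t(B,i)=\int_B\varphi(t,y,i)\widehat\varphi(t,y,i)\,dy$, which is \eqref{eq:marginal density of P-hat}; at $t=T$ this is consistent with the last equation of the $(\varphi,\widehat\varphi)$-Schr\"odinger system \eqref{Equation : phi-phihat Schrodinger System, general theory}, which identifies the product with $\rho_T$.

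I do not expect a genuine obstacle once \cref{lem: transition distribution under bP^} is in hand; the argument is bookkeeping. The only points needing care are (i) tracking the various negligible sets ($\rho_0$-null in $x_0$, Lebesgue-null in $y$, and the $\widehat\bP$-null set $\{\varphi=0\}$ for $t\in(0,T)$), and (ii) the finiteness and measurability of $\widehat\varphi(t,\cdot,i)$, which is exactly what Assumption~\ref{Assumptions D} (boundedness of $p_{ij}$ and integrability of $\f$ under $\bR_0$) secures and which legitimizes the Tonelli interchange. If one additionally wants $\widehat p$ to satisfy the boundedness clause of \ref{Assumption: transition densities}, a short supplementary estimate is needed (boundedness of $\g$ and $p_{ij}$ together with lower bounds on $\varphi$ on compact time intervals bounded away from $0$ and $T$), but this is not required for the statement as formulated.
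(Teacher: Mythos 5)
Your proposal is correct and follows essentially the same route as the paper: the transition density is read off directly from \cref{lem: transition distribution under bP^} together with \ref{Assumption: transition densities}, and the marginal density is obtained by propagating $\rho_0$ through $\widehat p$ and using $\rho_0(dx_0,i_0)=\f(x_0,i_0)\varphi(0,x_0,i_0)\bR_0(dx_0,i_0)$ so that the $\varphi(0,\cdot)$ factor cancels and the remaining integral is $\widehat\varphi(t,y,i)$ by \eqref{Equation:phihat-definition}. Your extra bookkeeping about null sets and the Tonelli interchange matches what the paper leaves implicit.
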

\begin{proof}
Clearly, under Assumption \ref{Assumptions D}, the existence and the proposed formula \eqref{eq:trans density p^ij} of the transition density function under $\widehat\bP$ follows immediately from the result \eqref{eq: transition distribution under P^} proven in \cref{lem: transition distribution under bP^}. 
Therefore, we only need to examine the marginal distributions of $\widehat\bP$. Given $t\in(0,T]$, $i\in\rS$ and a Borel set $B\subseteq\R^d$, by \eqref{Equation:phihat-definition}, \eqref{Equation : phi-phihat Schrodinger System, general theory} and \eqref{eq:trans density p^ij}, we have that 
\begin{equation*}
    \begin{aligned}
      &\widehat\bP(X_t\in B, \Lambda_t=i)\\
      &\qquad=\sum_{j\in\rS}\int_B\int_{\R^d} \frac{\varphi(t,x,i)}{\varphi(0,y,j)}p_{ji}(0,y,t,x)\rho_0(dy,j)dx\\
      &\qquad=\int_B\varphi(t,x,i)\pran{\sum_{j\in\rS}\int_{\R^d} p_{ji}(0,y,t,x)\f(y,j)\bR_0(dy,j)}dx\\
      &\qquad=\int_B\varphi(t,x,i)\widehat \varphi(t,x,i)dx.
    \end{aligned}
\end{equation*}
We conclude that the marginal density function $\widehat\bP_t(x,i)$ exists and is given by \eqref{eq:marginal density of P-hat}.
\end{proof}
\noindent \textbf{Remark.} The statement of Theorem \ref{Theorem: phi*phi-hat product density} implies that, for every $t\in(0,T]$ and $i\in\rS$, $\varphi(t,x,i)\widehat\varphi(t,x,i)<\infty$ for Lebesgue-a.e. $x\in\R^d$, and both $\varphi(t,x,i)$ and $\widehat\varphi(t,x,i)$ are positive and finite for $\widehat\bP_t$-a.e. $x\in\R^d$. 

\subsection{The Backward and Forward PIDEs}
Continuing under the setting of the previous subsection, the goal of this part is to establish the partial integro-differential equations (PIDEs) that the Schr\"odinger potentials $(\varphi,\widehat\varphi)$ satisfy. These PIDEs should be viewed as the continuum-time extension of the $(\varphi,\widehat\varphi)$-Schr\"odinger system \eqref{Equation : phi-phihat Schrodinger System, general theory}, and they characterize the dynamics, both forward and backward in time, of the regime-switching jump diffusion process under the Schr\"odinger bridge $\widehat\bP$. In order to establish these PIDEs, we will work with a set of strengthened assumptions compared to those in the previous subsection. 

Let $\bR,\rho_0,\rho_T$ be the same as before, and let $\set{(X_t,\Lambda_t)}_{[0,T]}$ be the canonical process under $\bR$. Recall that the integro-differential operator $L$ defined in \eqref{Equation:Reference RS-generator} is the generator associated with the SDE \eqref{Reference SDE general theory}, describing the ``backward'' dynamics of the solution process $\set{(X_t,\Lambda_t)}_{[0,T]}$, and closely related to $\varphi$. We want to determine the counterpart for the ``forward'' dynamics. We expect that the forward generator is given by the \textit{adjoint} of $L$, but we first need to identify a proper notion of adjoint operator in the regime-switching setting. \\

\noindent \textbf{Definition. }For functions $f,g : [0,T]\times \R^d \times \rS \to \R$ such that, for every $i\in \rS$, $f(\cdot, i),g(\cdot,i)\in L^2([0,T]\times\R^d)$, consider the inner product 
\begin{equation}\label{Definition: bracket}
    \inn{f,g} := \sum_{i\in \rS}\int_0^T\int_{\R^d} f(t,x,i)g(t,x,i)dxdt.
\end{equation}
This is a combination of the $\ell^2(\rS)$ and $L^2([0,T]\times\R^d)$ inner products. 
We will say that $L^*$ is the \textit{adjoint operator} of $L$ if, for all $f,g$ of class $C^{1,2}_c$, 
\begin{equation}\label{eq: def of adjoint of L}
\inn{Lf,g}=\inn{f,L^*g}.
\end{equation}

We can derive the explicit form of $L^*$. For notational simplicity, we denote by $L_{i}$, $i\in\rS$, the generator of regime $i$ without regime switching, that is, for $f$ of class $C^{1,2}_c$,
\[
L_{i}f(t,x) := Lf(t,x,i) - \sum_{j\in\rS} Q_{ij}(t,x)\pran{f(t,\psi_{ij}(t,x),j)-f(t,x,i)}\textup{ for }(t,x)\in[0,T]\times\R^d.
\]
Hence, by \eqref{Definition: bracket}, for $f,g$ of class $C^{1,2}_c$,
\begin{equation}\label{Equation: bracket with split L0 and Q}
\begin{aligned}
\inn{Lf, g} = &\sum_{i\in\rS} \inn{L_{i}f, g}_{L^2([0,T]\times\R^d)} \\
&+  \sum_{i,j\in\rS} \int_0^T \int_{\R^d} Q_{ij}(t,x)\pran{f(t,\psi_{ij}(t,x),j)-f(t,x,i)}g(t,x,i)dx.
\end{aligned}
\end{equation}
From here, it is clear that in order to express $L^*$ as an integro-differential operator, we need to impose additional assumptions on the coefficients of the SDE \eqref{Reference SDE general theory}. \\

\paragraph{\textbf{Assumption ($\widehat{\textbf{C}}$)}}\customlabel{Assumptions E}{\textbf{($\widehat{\textbf{C}}$)}} Suppose that Assumption \ref{Assumptions C} holds for $b,\sigma,\gamma,\psi$. In addition,
\begin{enumerate}[label=(\textbf{$\widehat{\textbf{C}}$\arabic*})]
\item \label{asm: Kunita regularity}  as functions on $[0,T]\times\R^d\times\rS$, $b$ is of class $C^{1,1}$, $\sigma$ is of class $C^{1,2}$,  $\gamma(\cdot,z)$ is of class $C^{1,2}$ for every $z\in\R^\ell$; in addition, for every $(t,x,i)\in[0,T]\times\R^d\times\rS$, the functions $z\mapsto\gamma(t,x,i,z),\nabla\gamma(t,x,i,z),\nabla^2\gamma(t,x,i,z)$ are all continuously differentiable;
\item \label{asm:diffeomorphism phi} 
for every $(t,i,z)\in[0,T]\times\rS\times\R^\ell$, the map $x\mapsto\phi_{t,z,i}(x):=x+\gamma(t,x,i,z)$ is a $C^1$-diffeomorphism of $\R^d$, and if $\phi^{-1}_{t,z,i}(y)$ denotes the inverse of $y=\phi_{t,z,i}(x)$, then for every $(t,y,i)\in[0,T]\times\R^d\times\rS$, the function $z\mapsto\phi^{-1}_{t,z,i}(y)$ is continuously differentiable; 
\item \label{asm: psi diffeomorphism} for every $i,j \in \rS$ and $t\in[0,T]$, the map $x\mapsto\psi_{ij}(t,x)$ is a $C^1$-diffeomorphism of $\R^d$.
\end{enumerate}
For every $i\in\rS$, $L_i$ is a jump diffusion generator without regime switching. Under \ref{asm: Kunita regularity} and \ref{asm:diffeomorphism phi}, the adjoint operator (with respect to $L^2([0,T]\times\R^d)$) of $L_i$ has been explicitly determined (e.g., \cite[Theorem 5.1]{zlotchevski2024schrodinger} or \cite[Proposition 4.6.1]{kunita2019stochastic}) as, for $f$ of class $\in C^{1,2}_c$ and $(t,x)\in[0,T]\times\R^d$,
\begin{equation*}
\begin{aligned}
     L_i^*f(t,x)&:= -\nabla \cdot (bf) (t,x,i)+\frac{1}{2} \sum_{m,n}\frac{\partial^2[(\sigma\sigma^\top)_{mn}f]}{\partial x_m \partial x_n}(t,x,i) \\
    &\qquad+\int_{\R^\ell} \bigg(\det \nabla\phi_{t,z,i}^{-1}(x)f(t,\phi_{t,z,i}^{-1}(x),i)-f(t,x,i)\\ &\hspace{5cm}+\one_{|z|\leq 1}\nabla\cdot (\gamma f)(t,x,i,z) \bigg)\nu(dz),
\end{aligned}
\end{equation*}
where $\det \nabla\phi_{t,z,i}^{-1}$ is the Jacobian determinant of $\phi_{t,z,i}^{-1}$. We still need to derive the adjoint of the regime-switching component of $L$, for which we invoke  \ref{asm: psi diffeomorphism}. Given $f,g$ of class $C^{1,2}_c$, we use a change of variable $y=\psi_{ij}(t,x)$ to write the second term in \eqref{Equation: bracket with split L0 and Q} as
\begin{align*}
    \int_0^T\int_{\R^d} Q_{ij}&(t,x)\pran{f(t,\psi_{ij}(t,x),j)-f(t,x,i)}\cdot g(t,x,i)dxdt\\
    &=\int_0^T\int_{\R^d} f(t,y,j)\cdot\abs{\det\nabla\psi_{ij}^{-1}(t,y)} Q_{ij}(t,\psi_{ij}^{-1} (t,y))g(t,\psi_{ij}^{-1}(t,y),i)dydt\\
    &\hspace{5cm}-\int_0^T\int_{\R^d}f(t,x,i)\cdot Q_{ij}(t,x)g(t,x,i)dxdt,
\end{align*} 
where $\psi_{ij}^{-1}(t,y)$ is the inverse of $y=\psi_{ij}(t,x)$, and $\det \nabla\psi_{ij}^{-1}(t,y)$ is the Jacobian determinant of $\psi_{ij}^{-1}(t,y)$. We can get the explicit form of the desired adjoint operator by combining the above with \eqref{eq: def of adjoint of L} and \eqref{Equation: bracket with split L0 and Q}, and summing over $i,j\in\rS$. We summarize these derivations in a lemma. 
\begin{lem}\label{Lemma: adjoint of L}
    Given $L$ an integro-differential operator of the form \eqref{Equation:Reference RS-generator}, suppose that Assumption \textup{\ref{Assumptions E}} holds for the coefficients of $L$.
    Then, the adjoint operator $L^*$ in \eqref{eq: def of adjoint of L} is given by, for every $f$ of class $C^{1,2}_c$ and $(t,x,i)\in[0,T]\times\R^d\times\rS$, 
\begin{equation}\label{Equation: L* for rsjd}
\begin{aligned}
    L^*f(t,x,i) &=  L^*_{i}f(t,x)\\
    &\qquad+\sum_{j\in\rS} \bigg(\abs{\det\nabla\psi_{ji}^{-1}(t,x)}Q_{ji}(t,\psi_{ji}^{-1}(t,x))f(t,\psi_{ji}^{-1}(t,x),j)\\
    &\hspace{7cm}- Q_{ij}(t,x)f(t,x,i)\bigg).
\end{aligned}
\end{equation}
In particular, in the special case when $\psi_{ij}(t,x)=x$ for all $i,j\in\rS$ and $t\in[0,T]$ \footnote[2]{This case corresponds to the regime-switching model without hybrid jumps.}, 
\begin{equation*}
    L^*f(t,x,i) =  L^*_if(t,x)+\sum_{j\in\rS} \pran{Q_{ji}(t,x)f(t,x,j)- Q_{ij}(t,x)f(t,x,i)}.
\end{equation*}
\end{lem}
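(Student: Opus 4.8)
The plan is to use the splitting of $L$ into its ``fixed-regime'' part $\sum_{i\in\rS}L_i$ and its ``regime-switching'' part, which is already exhibited in \eqref{Equation: bracket with split L0 and Q}, to compute the adjoint of each piece, and then to read off $L^*$ from the defining identity \eqref{eq: def of adjoint of L}. Since \eqref{eq: def of adjoint of L} is required only for $f,g$ of class $C^{1,2}_c$, the compact supports of $f$ and $g$ together with the local boundedness in Assumptions \ref{Assumptions C} and \ref{Assumptions E} make every space--time integral, every $\nu(dz)$-integral inside $L_i^*$, and the finite sum $\sum_{i,j\in\rS}$ absolutely convergent, so that all the manipulations below (Fubini, integration by parts, change of variables) are legitimate; this is exactly the bookkeeping already used in the single-regime setting.

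First I would dispose of the fixed-regime part. For each $i\in\rS$, $L_i$ is a genuine regime-free jump-diffusion generator, so under \ref{asm: Kunita regularity}--\ref{asm:diffeomorphism phi} the classical adjoint formula quoted above (from \cite[Theorem 5.1]{zlotchevski2024schrodinger}, \cite[Proposition 4.6.1]{kunita2019stochastic}) gives $\sum_{i\in\rS}\inn{L_if,g}_{L^2([0,T]\times\R^d)}=\sum_{i\in\rS}\inn{f(\cdot,i),L_i^*g(\cdot,i)}_{L^2([0,T]\times\R^d)}$, which accounts for the $L^*_i f(t,x)$ term in \eqref{Equation: L* for rsjd}.

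Next I would handle the regime-switching term $\sum_{i,j\in\rS}\int_0^T\!\!\int_{\R^d}Q_{ij}(t,x)\big(f(t,\psi_{ij}(t,x),j)-f(t,x,i)\big)g(t,x,i)\,dx\,dt$ by splitting it into two pieces. The $f(t,x,i)$-piece is already diagonal and contributes $-\sum_{i\in\rS}\int_0^T\!\!\int_{\R^d}f(t,x,i)\big(\sum_{j}Q_{ij}(t,x)\big)g(t,x,i)\,dx\,dt$, which is the term $-Q_{ij}(t,x)f(t,x,i)$ of \eqref{Equation: L* for rsjd}. For the $f(t,\psi_{ij}(t,x),j)$-piece, \ref{asm: psi diffeomorphism} lets me substitute $y=\psi_{ij}(t,x)$ in the $x$-integral, with $x=\psi_{ij}^{-1}(t,y)$ and $dx=\abs{\det\nabla\psi_{ij}^{-1}(t,y)}\,dy$; the preimage of the compact support of $f(t,\cdot,j)$ under the homeomorphism $\psi_{ij}(t,\cdot)$ is again compact, so finiteness is preserved. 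This moves the argument of $f$ onto the $j$-slot, and after relabelling the pair of summation indices $i\leftrightarrow j$ the integrand acting on $f(t,x,i)$ becomes exactly $\sum_{j}\abs{\det\nabla\psi_{ji}^{-1}(t,x)}Q_{ji}(t,\psi_{ji}^{-1}(t,x))g(t,\psi_{ji}^{-1}(t,x),j)$, i.e. the remaining term of \eqref{Equation: L* for rsjd}.

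Collecting the three contributions and comparing with $\inn{Lf,g}=\inn{f,L^*g}$ identifies, inside each $\inn{f(\cdot,i),\,\cdot\,}_{L^2}$, the operator acting on $g$; since this holds for all $f$ of class $C^{1,2}_c$, it pins down $L^*$ as the right-hand side of \eqref{Equation: L* for rsjd}. The special case $\psi_{ij}(t,x)\equiv x$ is then immediate: $\psi_{ij}^{-1}(t,x)=x$ and $\det\nabla\psi_{ij}^{-1}(t,x)=1$, so the Jacobian factors vanish and $L^*f(t,x,i)=L_i^*f(t,x)+\sum_{j}\big(Q_{ji}(t,x)f(t,x,j)-Q_{ij}(t,x)f(t,x,i)\big)$. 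I expect the only real hazard to be the index bookkeeping in the change of variables --- keeping straight whether $Q_{ij}$ or $Q_{ji}$ appears, whether $\psi$ or $\psi^{-1}$ acts on the spatial variable, and that the Jacobian determinant attaches to the transported term --- rather than any analytic difficulty, since the integrability justifications are routine given the compact supports of $f,g$.
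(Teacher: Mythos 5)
Your proposal is correct and takes essentially the same route as the paper: it splits $\inn{Lf,g}$ via \eqref{Equation: bracket with split L0 and Q}, quotes the known single-regime adjoint $L_i^*$ under \ref{asm: Kunita regularity}--\ref{asm:diffeomorphism phi}, and treats the switching term by the change of variables $y=\psi_{ij}(t,x)$ permitted by \ref{asm: psi diffeomorphism} followed by the relabelling $i\leftrightarrow j$, exactly as in the paper's derivation preceding the lemma. The index bookkeeping ($Q_{ji}$, $\psi_{ji}^{-1}$, and the Jacobian attached to the transported term) and the special case $\psi_{ij}(t,x)\equiv x$ are handled correctly, so there is no gap.
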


Going back to $(\varphi,\widehat\varphi)$, in order to study them in the setting of PIDE, we need to restrict ourselves to the situation when $\varphi$ and $\widehat\varphi$ are sufficiently regular. To this end, we impose a strengthening of Assumption \ref{Assumptions D}.\\

\paragraph{\textbf{Assumption ($\widehat{\textbf{D}}$)}}\customlabel{Assumptions D^}{($\widehat{\textbf{D}}$)} Suppose Assumption \ref{Assumptions D} holds for the reference measure $\bR$. In addition,  \ref{Assumption: transition densities} is strengthened to the following:
\begin{enumerate}[label=(\textbf{$\widehat{\textbf{D}}$\arabic*})]
\item \label{Assumption: D^ transition densities} 
The transition density function $p$ given by \ref{Assumption: transition densities} satisfies that, for every $i,j\in\rS$,
\begin{enumerate}
    \item $p_{ij}(t,x,T,y),\nabla_xp_{ij}(t,x,T,y),\nabla_x^2p_{ij}(t,x,T,y),\frac{\partial}{\partial t}p_{ij}(t,x,T,y)$ are all defined as continuous functions in $(t,x,y)\in(0,T)\times\R^d\times\R^d$, and are all bounded in $x\in\R^d$ with the bounds being Lebesgue integrable in $y\in\R^d$ for every $t\in(0,T)$.
    \item $p_{ij}(0,x,s,y),\nabla_yp_{ij}(0,x,s,y),\nabla_y^2p_{ij}(0,x,s,y),\frac{\partial}{\partial s}p_{ij}(0,x,s,y)$ are all defined as continuous functions in $(x,s,y)\in\R^d\times(0,T)\times\R^d$, and are all bounded in $(x,y)\in\R^d\times\R^d$ for every $s\in(0,T)$.
\end{enumerate}
\end{enumerate}
Note that \ref{Assumption: D^ transition densities} implies that, as functions on $(0,T)\times\R^d$, $(t,x)\mapsto p_{ij}(t,x,T,\cdot)$ and $(s,y)\mapsto p_{ij}(0,\cdot,s,y)$ are of class $C^{1,2}_b$. \\

\noindent \textbf{Remark. }Assumption \ref{Assumptions D^}, which consists of \ref{Assumption: D^ transition densities} and \ref{Assumption: endpoint densities}, provides a set of sufficient conditions for the regularity of $(\varphi,\widehat\varphi)$ required in later discussions. As we will see in the next section, we can readily find regime-switching jump diffusion models that satisfy Assumption \ref{Assumptions D^}. However, these conditions are not unique. Some variations of \ref{Assumption: D^ transition densities} and \ref{Assumption: endpoint densities} may also support the main theorem below. For example, if $p_{ij}$ in \ref{Assumption: D^ transition densities} satisfies the heat kernel estimate, then \ref{Assumption: endpoint densities} may be relaxed to $\f,\g$ having at most polynomial growth at infinity; or, if the function $(t,x,i)\mapsto p_{ij}(t,x,T,\cdot)$ is harmonic, then the $C^{1,2}_b$-requirement in \ref{Assumption: D^ transition densities}(a) on the function $(t,x)\mapsto p_{ij}(t,x,T,\cdot)$ may be replaced by a $C^{1,2}$-requirement. \\

\begin{thm}\label{Lemma: cL*p=0 for rsjd}
    Suppose that Assumptions \textup{\ref{Assumptions E}} and \textup{\ref{Assumptions D^}} hold. Let $L$ be the integro-differential operator in \eqref{Equation:Reference RS-generator}, $L^*$ be the adjoint operator given by \eqref{Equation: L* for rsjd},  and $\varphi,\widehat\varphi$ be defined as in \eqref{Equation:phi-definition} and \eqref{Equation:phihat-definition}. Then, $\varphi,\widehat\varphi$ are of class $C_b^{1,2}$. Moreover, $\varphi$ satisfies the \textup{Kolmogorov backward equation}:
    \begin{equation}\label{eq: backward PIDE for varphi}
        \pran{\frac{\partial}{\partial t}+L}\varphi(t,x,i)=0\textit{ for every }(t,x,i)\in(0,T)\times\R^d\times\rS,
    \end{equation}
    and 
    $\widehat\varphi$ satisfies the \textup{Kolmogorov forward equation}:
    \begin{equation}\label{eq: forward PIDE for varphi^}
        \pran{-\frac{\partial}{\partial s}+L^*}\widehat\varphi(s,y,j)=0\textit{ for every }(s,y,j)\in(0,T)\times\R^d\times\rS.
    \end{equation}
\end{thm}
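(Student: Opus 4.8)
The plan is to prove the three assertions in turn, using that $\varphi(t,x,i)=\bE_{\bP^{t,x,i}}[\g(X_T,\Lambda_T)]$ is bounded ($0\le\varphi\le\sup\g<\infty$ by Assumption \ref{Assumptions D}) and that $\widehat\varphi$ is finite and nonnegative by Assumption \ref{Assumption: endpoint densities}. For $\varphi,\widehat\varphi\in C^{1,2}_b$ I would differentiate under the integral signs in \eqref{Equation:phi-definition} and \eqref{Equation:phihat-definition}. Since $\g$ is bounded and \ref{Assumption: D^ transition densities}(a) gives, for each $i,j$ and each fixed $t\in(0,T)$, a single $y$-integrable dominating function for $p_{ij}(t,\cdot,T,\cdot)$ and for $\nabla_xp_{ij}$, $\nabla^2_xp_{ij}$, $\partial_tp_{ij}$, the dominated-differentiation theorem applies: $\partial_t\varphi,\nabla\varphi,\nabla^2\varphi$ are obtained by passing the derivative onto $p_{ij}$, and their continuity together with the local-in-$t$ spatial boundedness follow from the same estimates (this is precisely the $C^{1,2}_b$ remark following \ref{Assumption: D^ transition densities}), with the value at $t=T$ read in the $\bR_T$-a.e.\ sense. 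For $\widehat\varphi$ the situation is dual --- $\f(\cdot,i)$ is $\bR_0(\cdot,i)$-integrable, and \ref{Assumption: D^ transition densities}(b) supplies uniform-in-$(x,y)$ bounds for $p_{ij}(0,\cdot,s,\cdot)$ and its $y$-derivatives up to order two and its $s$-derivative for each fixed $s$ --- so the same reasoning gives $\widehat\varphi(\cdot,j)\in C^{1,2}_b$.

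For the backward PIDE I would differentiate a semigroup identity at $0^+$. Chapman--Kolmogorov \eqref{eq:C-K eq for p_ij} with \eqref{Equation:phi-definition} gives, for every $(t,x,i)\in(0,T)\times\R^d\times\rS$ and every small $h>0$, $\varphi(t,x,i)=\bE_{\bP^{t,x,i}}[\varphi(t+h,X_{t+h},\Lambda_{t+h})]$, with $\bP^{t,x,i}$ the SDE solution of \ref{Assumption: SDE}. Writing the right-hand side as $\bE_{\bP^{t,x,i}}[\varphi(t+h,X_{t+h},\Lambda_{t+h})-\varphi(t,X_{t+h},\Lambda_{t+h})]+\bE_{\bP^{t,x,i}}[\varphi(t,X_{t+h},\Lambda_{t+h})]$, dividing by $h$, and letting $h\searrow0$: by the mean value theorem in the time argument and dominated convergence (continuity and local-in-time boundedness of $\partial_t\varphi$ from the previous step, and $(X_{t+h},\Lambda_{t+h})\to(x,i)$ in $\bP^{t,x,i}$-probability) the first term tends to $\partial_t\varphi(t,x,i)$, while the second difference quotient tends to $L\varphi(t,x,i)$ by the identification of the probabilistic generator \eqref{eq: L as the infinitismal generator} with the integro-differential form \eqref{Equation:Reference RS-generator} on bounded $C^{1,2}_b$ functions. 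Since the left-hand side is $0$, this gives $(\partial_t+L)\varphi(t,x,i)=0$ for every $(t,x,i)$, i.e.\ \eqref{eq: backward PIDE for varphi}. (One can alternatively note that $\{\varphi(s,X_s,\Lambda_s)\}_{s\ge t}$ is a $\bP^{t,x,i}$-martingale and, via the martingale problem for $C^{1,2}_b$ functions after a localization by exit times, conclude that $\int_t^{\cdot}(\partial_r+L)\varphi\,dr$ is a continuous finite-variation local martingale, hence zero; the semigroup route avoids any continuity hypothesis on the coefficients.)

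For the forward PIDE I would argue weakly and then upgrade. First, $\widehat\varphi(s,\cdot,j)\,dy$ is the time-$s$, regime-$j$ marginal of the finite measure $\f(X_0,\Lambda_0)\bR$. For $g$ of class $C^{1,2}_c$, Dynkin's formula under $\bR$ (the martingale-problem identity recalled in \cref{subsection:preliminaries}), multiplied by the $\cF_0$-measurable $\f(X_0,\Lambda_0)$ and integrated against $\bR$ --- legitimate since $g$ and $(\partial_r+L)g$ are bounded and $\f$ is $\bR_0$-integrable --- yields $\tfrac{d}{ds}\sum_j\int g(s,y,j)\widehat\varphi(s,y,j)\,dy=\sum_j\int(\partial_s+L)g(s,y,j)\,\widehat\varphi(s,y,j)\,dy$. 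Expanding the left side by differentiation under the integral (using the $C^{1,2}_b$-regularity of $\widehat\varphi$ and compact support of $g$), cancelling the $\partial_sg$ terms, and transferring $L$ off $g$ onto $\widehat\varphi$ via integration by parts together with the substitutions $y=\phi_{t,z,i}(x)$ and $y=\psi_{ij}(t,x)$ --- all valid because $g\in C^{1,2}_c$, $\widehat\varphi(s,\cdot,\cdot)\in C^2$, and \ref{asm:diffeomorphism phi}, \ref{asm: psi diffeomorphism} hold --- reproduces the computation behind \cref{Lemma: adjoint of L} and gives $\sum_j\int g(s,y,j)(\partial_s\widehat\varphi-L^*\widehat\varphi)(s,y,j)\,dy=0$. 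Taking $g=\chi(s)\eta(y,j)$ with $\chi\in C^\infty_c(0,T)$, $\chi(s_0)\ne0$, and $\eta$ arbitrary in $C^\infty_c(\R^d\times\rS)$ forces $\partial_s\widehat\varphi(s_0,\cdot,\cdot)=L^*\widehat\varphi(s_0,\cdot,\cdot)$ as locally integrable functions for every $s_0$, and since $\partial_s\widehat\varphi$ is continuous and, under \ref{Assumptions E}, the diffusion--jump part of $L^*\widehat\varphi$ is continuous, the identity holds pointwise, which is \eqref{eq: forward PIDE for varphi^}.

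I expect the main difficulty to be the uniform estimates needed to justify interchanging the differential operators with the defining integrals and finite sums, and --- for the forward PIDE --- realizing $L^*$ as the genuine adjoint acting on $\widehat\varphi$: the integration by parts and the two changes of variables (the jump substitution $y=\phi_{t,z,i}(x)$ and the hybrid-switch substitution $y=\psi_{ij}(t,x)$) must be carried out with enough integrability to commute with the $z$- and $w$-integrations and the sum over $\rS$, and the dominating functions provided by \ref{Assumption: D^ transition densities} (integrable in $y$ on the ``backward'' side, bounded in $(x,y)$ on the ``forward'' side) together with Assumption \ref{Assumption: endpoint densities} are exactly what make this legitimate. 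A secondary point is the passage from the a.e.\ forward identity to the pointwise statement: this is automatic once $L^*\widehat\varphi$ is continuous, which holds under \ref{Assumptions E} for the diffusion--jump part and needs only continuity of the switching rates $Q_{ij}$ (available, e.g., in the uSBP of \cref{Section: uSBP example}); without that regularity the forward equation still holds for Lebesgue-a.e.\ $y$, which is all that the subsequent sections require.
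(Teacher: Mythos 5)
Your proposal is correct and follows essentially the same route as the paper: dominated convergence to pass derivatives onto $p_{ij}$ for the $C^{1,2}_b$ regularity, the semigroup identity $\varphi(t,x,i)=\bE_{\bP^{t,x,i}}[\varphi(t+h,X_{t+h},\Lambda_{t+h})]$ differentiated at $h=0^+$ together with the identification of \eqref{eq: L as the infinitismal generator} with \eqref{Equation:Reference RS-generator} on $C^{1,2}_b$ functions for the backward equation, and Dynkin's formula weighted by $\f(X_0,\Lambda_0)$ leading to the weak identity $\inn{-\partial_s f,\widehat\varphi}=\inn{Lf,\widehat\varphi}$, upgraded by the regularity of $\widehat\varphi$, for the forward equation. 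The only differences are cosmetic: you split the time and space increments separately in the backward argument and carry out the adjoint integration-by-parts/change-of-variables on $\widehat\varphi$ explicitly (with the honest caveat about the continuity needed for the a.e.-to-everywhere upgrade), where the paper simply invokes the duality defining $L^*$ and asserts the strong form.
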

\begin{proof}
Under Assumption \ref{Assumptions D^}, the fact that $\varphi,\widehat\varphi$ are of class $C_b^{1,2}$ follows from a simple application of the dominated convergence theorem, as all the concerned derivatives of $\varphi,\widehat\varphi$ can be passed onto $p_{ij}$ inside the integrals in \eqref{Equation:phi-definition} and \eqref{Equation:phihat-definition}. 

Let $\set{(X_t,\Lambda_t))}_{[0,T]}$ be the canonical process under $\bR$. By the equivalent definition \eqref{Definition: general varphi} of $\varphi$, we observe that for every $(t,x,i)\in(0,T)\times\R^d\times\rS$ and $h>0$ small, 
\begin{equation*}
  \begin{aligned}
     \varphi(t,x,i)&=\bE_{\bR}[\g(X_T,\Lambda_T)\mid (X_{t},\Lambda_{t})=(x,i)]\\
     &=\bE_{\bR}[\varphi(t+h,X_{t+h},\Lambda_{t+h})\mid (X_{t},\Lambda_{t})=(x,i)]\\
     &=\bE_{\bP^{t,x,i}}[\varphi(t+h,X_{t+h}, \Lambda_{t+h})]. 
  \end{aligned}  
\end{equation*}
Since $\varphi$ is of class $C^{1,2}_b$, by \eqref{eq: L as the infinitismal generator}, we have that
\begin{equation*}
    \begin{aligned}
        -\frac{\partial}{\partial t}\varphi(t,x,i)&=\lim_{h\searrow 0}\frac{1}{h}\pran{\varphi(t,x,i)-\varphi(t+h,x,i)}\\
        &=\lim_{h\searrow 0}\frac{1}{h}\pran{\bE_{\bP^{t,x,i}}[\varphi(t+h, X_{t+h}, \Lambda_{t+h})]-\varphi(t+h,x,i)}\\
        &=L\varphi(t,x,i),
    \end{aligned}
\end{equation*}
where in the last step we used the continuity of the coefficients of $L$ in the temporal variable. This is exactly the backward equation \eqref{eq: backward PIDE for varphi}.

For the forward equation, given the fact that $\widehat\varphi$ is of class $C^{1,2}_b$, it suffices to show that $\widehat\varphi$ satisfies \eqref{eq: forward PIDE for varphi^} in the weak sense with respect to the action $\inn{\cdot,\cdot}$ in \eqref{Definition: bracket}.
To this end, we take an arbitrary $f$ of class $C^{1,2}_c$ and apply It\^o's formula to $\set{f(t,X_t,\Lambda_t)}_{[0,T]}$ to get, for every $(s,x,i)\in(0,T)\times\R^d\times\rS$,
\begin{multline*}
    \bE_{\bR}[f(s,X_s,\Lambda_s)|(X_0,\Lambda_0)=(x,i)]\\=f(0,x,i)+\bE_{\bR}\brac{\int_0^s \pran{\frac{\partial}{\partial r}+L}f(r,X_{r},\Lambda_{r})dr\bigg|(X_0,\Lambda_0)=(x,i)}
\end{multline*}
which can be rewritten as
\begin{multline*}
\sum_{j\in\rS} \int_{\R^d} p_{ij}(0,x,s,y)f(s,y,j) dy \\
=f(0,x,i)+\sum_{j\in\rS} \int_{\R^d} \int_0^s \pran{\frac{\partial}{\partial r}+L}f(r,y,j) p_{ij}(0,x,r,y)dr dy.
\end{multline*}
Under Assumption \ref{Assumptions D^}, differentiating in $s$ both sides of the above and using the dominated convergence theorem, we arrive at
\[
\sum_{j\in\rS} \int_{\R^d} \frac{\partial}{\partial s} p_{ij}(0,x,s,y)\; f(s,y,j) dy = \sum_{j\in\rS} \int_{\R^d} Lf(s,y,j) \;p_{ij}(0,x,s,y)dy.
\]
In particular,
\begin{multline*}
    \sum_{i\in\rS}\int_{\R^d}\pran{\sum_{j\in\rS} \int_{\R^d} \frac{\partial}{\partial s} p_{ij}(0,x,s,y)\; f(s,y,j) dy}\f(x,i)\bR_0(dx,i) \\= \sum_{i\in\rS}\int_{\R^d}\pran{\sum_{j\in\rS} \int_{\R^d} Lf(s,y,j) \;p_{ij}(0,x,s,y)dy}\f(x,i)\bR_0(dx,i).
\end{multline*}
By \eqref{Equation:phihat-definition} and Fubini's theorem, we get 
\begin{equation*}
    \sum_{j\in\rS} \int_{\R^d} \frac{\partial}{\partial s} \widehat\varphi(s,y,j)\cdot f(s,y,j) dy=\sum_{j\in\rS} \int_{\R^d} Lf(s,y,j)\cdot \widehat\varphi(s,y,j)dy.
\end{equation*}
This is exactly
$\inn{-\frac{\partial}{\partial s}f,  \widehat\varphi} = \inn{Lf, \widehat\varphi}$. Since $f$ of class $C^{1,2}_c$ is arbitrary, we conclude that
$\pran{-\frac{\partial}{\partial s}+L^*}\widehat\varphi(s,y,j)=0$ in the weak sense, and hence in the strong sense as well.
\end{proof}
\noindent Note that the statement in Theorem \ref{Lemma: cL*p=0 for rsjd} on $\varphi$ implies that, under Assumption \ref{Assumptions D^}, $\varphi$ is in fact harmonic under $L$, as defined in Assumption \ref{Assumption: general harmonic varphi}. 

As a direct consequence of Theorem \ref{Lemma: cL*p=0 for rsjd}, we obtain the following ``dynamic'' formulation of the $(\varphi,\widehat\varphi)$-Schr\"odinger system. 
\begin{cor}\label{Theorem : dynamic schrodinger system - general}
Under the same assumptions as in Theorem \ref{Lemma: cL*p=0 for rsjd},
the pair $(\varphi,\widehat\varphi)$  solves the \textup{dynamic Schr\"odinger system}
\begin{equation}\label{Dynamic Schrodinger system - general}
    \begin{dcases}
        \pran{\frac{\partial}{\partial t}+L} \varphi(t,x,i) = 0 & \textit{ for }(t,x,i)\in(0,T)\times\R^d\times\rS,\\
        \pran{-\frac{\partial}{\partial s}+L^*} \widehat\varphi(s,y,j) = 0 &\textit{ for } (s,y,j)\in(0,T)\times\R^d\times\rS,\\
        \rho_0(dx,i) = \varphi(0,x,i)\widehat\varphi(0,dx,i) & \textit{  as measures on }\R^d\textit{ for every }i\in\rS,\\
        \rho_T(y,j) = \varphi(T,y,j)\widehat\varphi(T,y,j) & \textit{ for Lebesgue-a.e. }y\in\R^d\textit{ and every }j\in\rS.
    \end{dcases}
\end{equation}
\end{cor}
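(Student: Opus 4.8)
The plan is to assemble the four equations of \eqref{Dynamic Schrodinger system - general} from results already established, with essentially no new computation. The first two lines---the Kolmogorov backward equation for $\varphi$ and the Kolmogorov forward equation for $\widehat\varphi$---are precisely the conclusions \eqref{eq: backward PIDE for varphi} and \eqref{eq: forward PIDE for varphi^} of \cref{Lemma: cL*p=0 for rsjd}, which applies verbatim here since Assumptions \ref{Assumptions E} and \ref{Assumptions D^} are in force by hypothesis. In particular, \cref{Lemma: cL*p=0 for rsjd} also yields that $\varphi$ and $\widehat\varphi$ are of class $C^{1,2}_b$, so that both PIDEs are valid in the strong (pointwise) sense on $(0,T)\times\R^d\times\rS$, exactly as written in the statement.

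For the two remaining relations, I would first observe that Assumption \ref{Assumptions D^} strengthens Assumption \ref{Assumptions D} by definition, so the $(\varphi,\widehat\varphi)$-Schr\"odinger system \eqref{Equation : phi-phihat Schrodinger System, general theory} is available. Its third and fourth equations,
\[
\rho_0(dx,i)=\varphi(0,x,i)\,\widehat\varphi(0,dx,i)\quad\text{and}\quad\rho_T(y,j)=\varphi(T,y,j)\,\widehat\varphi(T,y,j),
\]
are exactly the last two lines of \eqref{Dynamic Schrodinger system - general}: the $t=0$ identity is an equality of measures on $\R^d$ for each $i\in\rS$ (recall $\widehat\varphi(0,dx,i)=\f(x,i)\bR_0(dx,i)$), and the $t=T$ identity holds for Lebesgue-a.e.\ $y\in\R^d$ for each $j\in\rS$, in accordance with the conventions fixed in \eqref{Equation:phi-definition}--\eqref{Equation:phihat-definition}. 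The only bookkeeping point is that the endpoint objects $\varphi(0,\cdot,i)$, $\varphi(T,\cdot,i)$, $\widehat\varphi(0,\cdot,i)$, $\widehat\varphi(T,\cdot,j)$ that appear in \eqref{Equation : phi-phihat Schrodinger System, general theory} are the same ones furnished by \cref{Lemma: cL*p=0 for rsjd}, which they are, since both are read off directly from \eqref{Equation:phi-definition}--\eqref{Equation:phihat-definition}; hence no matching argument between the two sources is needed.

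Collecting these four statements proves the corollary. I do not anticipate any genuine obstacle here: this is a packaging result, and the single item warranting a remark is merely the verification that Assumption \ref{Assumptions D^} entails the hypotheses of both \cref{Lemma: cL*p=0 for rsjd} and of the theorem underlying \eqref{Equation : phi-phihat Schrodinger System, general theory}, which is immediate. (One may additionally note, as is already pointed out after \cref{Lemma: cL*p=0 for rsjd}, that \eqref{eq: backward PIDE for varphi} shows $\varphi$ is harmonic in the sense of Assumption \ref{Assumption: general harmonic varphi}, so the dynamic system \eqref{Dynamic Schrodinger system - general} is fully consistent with the SDE/generator description of $\widehat\bP$ obtained in \cref{Section: General Theory}.)
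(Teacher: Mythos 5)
Your proof is correct and matches the paper's approach: the paper presents this corollary as a direct consequence of \cref{Lemma: cL*p=0 for rsjd} (for the two PIDEs) combined with the static $(\varphi,\widehat\varphi)$-system \eqref{Equation : phi-phihat Schrodinger System, general theory} (for the endpoint relations), which is exactly your assembly. No gaps.
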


Finally, we will return to the Schr\"odinger bridge $\widehat\bP$ and revisit its dynamics in the context of PIDE. Recall that the generator $L_{\widehat\bP}$ corresponding to $\widehat\bP$ is also an integro-differential operator taking the form \eqref{Equation:Reference RS-generator} but with different coefficients. Thus, we can apply \cref{Lemma: adjoint of L} to obtain its adjoint operator $L_{\widehat\bP}^*$, and we expect that the density function of the marginal distribution $\widehat\bP_t$ satisfies the corresponding forward equation $\pran{-\frac{\partial}{\partial t}+L^*_{\widehat\bP}}\widehat\bP_t=0$. This is indeed the case, as we will verify below.
\begin{thm}\label{Theorem: forward equation for Phat density - general}
    Suppose that Assumptions \textup{\ref{Assumptions E}} and \textup{\ref{Assumptions D^}} hold. In addition, assume $\varphi(t,x,i)>0$ for every $(t,x,i)\in(0,T)\times\R^d\times\rS$. Then, the $\inn{\cdot,\cdot}$-adjoint operator of $L_{\widehat\bP}$, denoted by $L_{\widehat\bP}^*$, is given by, for $f$ of class $C^{1,2}_c$ and $(t,x,i)\in(0,T)\times\R^d\times\rS$, 
\begin{equation}\label{eqn: P-hat adjoint}
    \begin{aligned}
    L_{\widehat\bP}^*&f(t,x,i)\\
    =  &-\nabla \cdot (b^\varphi f)(t,x,i) +\frac{1}{2} \sum_{m,n}\frac{\partial^2[(\sigma\sigma^\top)_{mn}f]}{\partial x_m \partial x_n}(t,x,i) \\
    &+\int_{\R^\ell} \bigg[\det \nabla\phi_{t,z,i}^{-1}(x)f(t,\phi_{t,z,i}^{-1}(x),i)\frac{\varphi(t,x,i)}{\varphi(t,\phi_{t,z,i}^{-1}(x),i)}-f(t,x,i)\frac{\varphi(t,\phi_{t,z,i}(x),i)}{\varphi(t,x,i)} \\
    &\hspace{3cm}+\one_{|z|\leq 1}\nabla\cdot \pran{\gamma(t,x,i,z)f(t,x,i)\frac{\varphi(t,\phi_{t,z,i}(x),i)}{\varphi(t,x,i)}  }\bigg]\nu(dz) \\
    &+\sum_{j\in\rS}\bigg( \abs{\det\nabla\psi_{ji}^{-1}(t,x)}Q_{ji}(t,\psi_{ji}^{-1}(t,x))\frac{\varphi(t,x,i)}{\varphi(t,\psi_{ji}^{-1}(t,x),j)}f(t,\psi_{ij}^{-1}(t,x),j)\\
    &\hspace{6cm}- Q_{ij}(t,x)\frac{\varphi(t,\psi_{ij}(t,x),j)}{\varphi(t,x,i)}f(t,x,i)\bigg),
    \end{aligned}
\end{equation}
where $b^\varphi$ is as defined in \cref{Theorem: General P-hat}.

    Moreover, if $\widehat\bP_t(x,i)$, $(x,i)\in\R^d\times\rS$, is the density function of the marginal distribution $\widehat\bP_t$, then it satisfies the Kolmorov forward equation corresponding to $\widehat\bP$:
    \begin{equation*}
        \frac{\partial}{\partial t}\;\widehat\bP_t(x,i) = L^*_{\widehat\bP}\;\widehat\bP_t(x,i)\textit{ for }(t,x,i)\in(0,T)\times\R^d\times\rS.
    \end{equation*}
\end{thm}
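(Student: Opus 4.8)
Both claims rest on the fact that, by \cref{Theorem: General P-hat}, $L_{\widehat\bP}$ is exactly the integro-differential operator \eqref{P-hat generator} — a regime-switching jump-diffusion generator with drift $b^{\varphi}$, jump intensity $\nu^{\varphi}(t,x,i;dz)=\frac{\varphi(t,x+\gamma,i)}{\varphi(t,x,i)}\nu(dz)$ and switching rates $Q^{\varphi}_{ij}=\frac{\varphi(t,\psi_{ij}(t,x),j)}{\varphi(t,x,i)}Q_{ij}$ — so that, under the standing hypotheses ($\varphi>0$ on $(0,T)\times\R^{d}\times\rS$, $\varphi\in C^{1,2}_{b}$ by \cref{Lemma: cL*p=0 for rsjd}, and Assumptions \ref{Assumptions C}, \ref{Assumptions E}), these coefficients are well defined and regular enough to run the adjoint computation of \cref{Lemma: adjoint of L}. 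For the form of $L_{\widehat\bP}^{*}$, the plan is simply to repeat, with $L_{\widehat\bP}$ in place of $L$, the change-of-variables and integration-by-parts argument behind \cref{Lemma: adjoint of L}: the drift and diffusion part gives $-\nabla\cdot(b^{\varphi}f)+\tfrac12\sum_{m,n}\partial^{2}_{mn}[(\sigma\sigma^{\top})_{mn}f]$ just as in $L_{i}^{*}$; for the jump part, after pairing with a test function and integrating in $x$, the substitution $y=\phi_{t,z,i}(x)$ (using the diffeomorphism property \ref{asm:diffeomorphism phi}) in the gain piece and an integration by parts in the compensator piece — carrying the factor $\varphi(t,\phi_{t,z,i}(x),i)/\varphi(t,x,i)$ through — produce the bracketed $\nu$-integral in \eqref{eqn: P-hat adjoint}; and for the switching part, the substitution $y=\psi_{ij}(t,x)$ (using \ref{asm: psi diffeomorphism}) together with the factor $\varphi(t,\psi_{ij}(t,x),j)/\varphi(t,x,i)$ yields the last two lines. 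The integrability bookkeeping near $z=0$ and near spatial infinity is identical to that in \cref{Lemma: adjoint of L}, since the extra $\varphi$-ratios are locally bounded and locally bounded away from $0$.

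\emph{The forward equation.} By \cref{Theorem: phi*phi-hat product density}, $\widehat\bP_{t}(x,i)=\varphi(t,x,i)\widehat\varphi(t,x,i)$, and by \cref{Lemma: cL*p=0 for rsjd}, $\varphi,\widehat\varphi\in C^{1,2}_{b}$ with $(\partial_{t}+L)\varphi=0$ and $(-\partial_{t}+L^{*})\widehat\varphi=0$. In particular $\widehat\bP_{\cdot}\in C^{1,2}$, so both $\partial_{t}\widehat\bP_{t}$ and $L^{*}_{\widehat\bP}\widehat\bP_{t}$ are continuous and it suffices to establish $\partial_{t}\widehat\bP_{t}=L_{\widehat\bP}^{*}\widehat\bP_{t}$ distributionally, i.e.\ $\inn{\widehat\bP_{\cdot},(\partial_{t}+L_{\widehat\bP})\phi}=0$ for every $\phi$ of class $C^{1,2}_{c}$ (the reduction uses integration by parts in $t$ and the adjoint relation for $L_{\widehat\bP}$, legitimate since $\phi$ is compactly supported). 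A direct computation from \eqref{P-hat generator} and the harmonicity $(\partial_{t}+L)\varphi=0$ yields the $h$-transform identity $(\partial_{t}+L_{\widehat\bP})\phi=\varphi^{-1}(\partial_{t}+L)(\varphi\phi)$; hence, using $\widehat\bP_{\cdot}=\varphi\widehat\varphi$,
\[
\inn{\widehat\bP_{\cdot},\,(\partial_{t}+L_{\widehat\bP})\phi}=\inn{\varphi\widehat\varphi,\,\varphi^{-1}(\partial_{t}+L)(\varphi\phi)}=\inn{\widehat\varphi,\,(\partial_{t}+L)(\varphi\phi)}.
\]
Since $\varphi\in C^{1,2}_{b}$ and $\phi\in C^{1,2}_{c}$, the product $\varphi\phi$ is again of class $C^{1,2}_{c}$, so the last expression vanishes by the weak form of the Kolmogorov forward equation for $\widehat\varphi$ proved inside \cref{Lemma: cL*p=0 for rsjd} (namely $\inn{(\partial_{t}+L)\psi,\widehat\varphi}=0$ for all $\psi$ of class $C^{1,2}_{c}$). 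Promoting this distributional identity to a pointwise one, via the continuity noted above, completes the proof. (Alternatively, taking $\inn{\cdot,\cdot}$-adjoints of the $h$-transform identity gives the conjugation relation $L_{\widehat\bP}^{*}g=-\varphi^{-1}(L\varphi)g+\varphi\,L^{*}(\varphi^{-1}g)$, into which $g=\varphi\widehat\varphi$ may be substituted to obtain $L_{\widehat\bP}^{*}(\varphi\widehat\varphi)=-(L\varphi)\widehat\varphi+\varphi L^{*}\widehat\varphi=(\partial_{t}\varphi)\widehat\varphi+\varphi\,\partial_{t}\widehat\varphi=\partial_{t}(\varphi\widehat\varphi)$ directly.)

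\emph{Main obstacle.} The technical heart is the first part: confirming that $b^{\varphi}$ and the $\varphi$-weighted jump and switching coefficients of $L_{\widehat\bP}$ inherit enough regularity — most delicately that $b^{\varphi}$ is $C^{1}$ in $x$, which forces differentiating the $\nu$-integral in its definition under the integral sign — to justify the change-of-variables and integration-by-parts steps and to make $L_{\widehat\bP}^{*}\widehat\bP_{t}$ a genuine continuous function. Once that is secured, the forward equation follows by the short dualization argument above.
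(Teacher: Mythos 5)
Your proposal is correct and follows essentially the same route as the paper: the adjoint formula is obtained by rerunning the change-of-variables/integration-by-parts computation of \cref{Lemma: adjoint of L} with the $\varphi$-weighted coefficients of $L_{\widehat\bP}$ (the paper handles the jump-term well-definedness by observing that $f^\varphi(t,x,i,z):=f(t,x,i)\varphi(t,x+\gamma,i)/\varphi(t,x,i)$ is again of class $C^{1,2}_c$, which is the same bookkeeping you describe), and the forward equation is proved exactly as you do, in weak form against $C^{1,2}_c$ test functions via the $h$-transform identity \eqref{h-transform generator identity} and the weak Kolmogorov forward equation for $\widehat\varphi$ from \cref{Lemma: cL*p=0 for rsjd}, then upgraded pointwise by the $C^{1,2}_b$ regularity of $\varphi\widehat\varphi$.
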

\begin{proof}
Since $\varphi$ is of class $C_b^{1,2}$ under Assumption \ref{Assumptions D^}, the coefficients of $L_{\widehat\bP}$ given by \eqref{P-hat generator} still satisfy Assumption \ref{Assumptions E}, which, by \cref{Lemma: adjoint of L}, guarantees the existence of $L^*_{\widehat\bP}$. Then, the formula \eqref{eqn: P-hat adjoint} of $L^*_{\widehat\bP}$ can be derived following the same steps as in the proof of \cref{Lemma: adjoint of L}, with the only differences being the coefficients $\frac{\varphi(t,\phi_{t,z,i}(x),i)}{\varphi(t,x,i)}$ in front of the jump component and $\frac{\varphi(t,\psi_{ij}(x),j)}{\varphi(t,x,i)}$ in front of the regime-switching component. While the latter case is straightforward, we need to verify that the former change does not cause any trouble in the integral with respect to $\nu(dz)$. To this end, we set
 \[
    f^{\varphi}(t,x,i,z):=f(t,x,i)\frac{\varphi(t,x+\gamma(t,x,i,z),i)}{\varphi(t,x,i)},
\]
and note that the integral with respect to $\nu(dz)$ in \eqref{eqn: P-hat adjoint} can be rewritten as 
\begin{equation*}
    \begin{aligned}
        &\int_{\R^\ell} \bigg[\det \nabla\phi_{t,z,i}^{-1}(x)f^{\varphi}(t,\phi_{t,z,i}^{-1}(x),i,z)-f^{\varphi}(t,x,i,z) \\
    &\hspace{3.5cm}+\one_{|z|\leq 1}\nabla\cdot \pran{\gamma(t,x,i,z)f^\varphi(t,x,i,z)  }\bigg]\nu(dz). \\
    \end{aligned}
\end{equation*}
Since $f^\varphi$ is again of class $C_c^{1,2}$, the above integral is well defined. 

For the statement on the PIDE, we recall from \cref{Theorem: phi*phi-hat product density} that the density function of $\widehat\bP_t$ is given by 
\[
\widehat\bP_t(x,i)=\varphi(t,x,i)\widehat\varphi(t,x,i).
\]
Again, because $\varphi\cdot\widehat\varphi$ is of class $C^{1,2}_b$, it suffices to show that the desired PIDE is satisfied in the weak sense.
Take an arbitrary $f$ of class $C^{1,2}_c$. Under Assumption \ref{Assumptions D^} and the hypothesis that $\varphi>0$, we know that Assumption \ref{Assumption: general harmonic varphi} holds for $\varphi$. Therefore, Theorem \ref{Theorem: General P-hat} applies. In particular, by \eqref{h-transform generator identity} in \cref{Theorem: General P-hat}, we have that 
\begin{equation*}
    \inn{(\frac{\partial}{\partial t}+L_{\widehat\bP})f,\,\varphi\cdot\widehat\varphi}=\inn{\frac{1}{\varphi}(\frac{\partial}{\partial t}+L)(f \cdot\varphi),\,\varphi\cdot\widehat\varphi}=\inn{(\frac{\partial}{\partial t}+L)(f\cdot\varphi),\,\widehat\varphi}=0,
\end{equation*}
where the last equality is due to the fact that $(-\frac{\partial}{\partial t}+L^*)\widehat\varphi=0$.
Hence, we have proven the desired PIDE for $\varphi\cdot\widehat\varphi$.
\end{proof}

\subsection{The Fortet-Sinkhorn Algorithm}\label{Subsection: Algorithm}
In this subsection, we discuss an algorithmic approach toward finding the solution $\widehat\bP$ to the SBP \eqref{Definition:DynamicSBP}, for which it suffices to solve the static Schr\"odinger system \eqref{Equation: fg Schrodinger System with lambda}. We will only consider the situation when the initial data $\bR_0$ admits a density function $x\in\R^d\mapsto\bR_0(x,i)$ for every $i\in\rS$ (and hence $\rho_0$ also admits a density function $\rho_0(\cdot,i)$), in which case $\widehat\varphi(0,x,i)=\f(x,i)\bR_0(x,i)$ for Lebesgue-a.e. $x\in\R^d$ and every $i\in\rS$. Under Assumption \ref{Assumptions D}, solving \eqref{Equation: fg Schrodinger System with lambda} is equivalent to finding $(\varphi,\widehat\varphi)$ that satisfies the system \eqref{Equation : phi-phihat Schrodinger System, general theory}, where the third equation in \eqref{Equation : phi-phihat Schrodinger System, general theory} becomes \[
\rho_0(x,i)=\varphi(0,x,i)\widehat\varphi(0,x,i)\textup{ for Lebesgue-a.e. $x\in\R^d$ and every $i\in\rS$.}
\]

The existence and uniqueness of a solution to \eqref{Equation : phi-phihat Schrodinger System, general theory}, as well as how to find the solution,  is a long-studied problem in the history of the SBP \cite{fortet1940resolution,beurling1960automorphism,jamison1974reciprocal,chen2016entropic}. One approach is to construct recursively a sequence of functions that converges to a solution of \eqref{Equation : phi-phihat Schrodinger System, general theory}. This is the idea of the \textit{Fortet-Sinkhorn algorithm}, originally proposed in \cite{fortet1940resolution}, which consists of alternating between the four equations \eqref{Equation : phi-phihat Schrodinger System, general theory}. More specifically, for any function $f:\R^d\times\rS \to \R\setminus\set{0}$, we define the maps $\cD,\cE_{\rho_T},\cE_{\rho_0}$ on $f$ as
\begin{equation*}
\begin{aligned}
    \cD (f)(x,i)&:=\frac{1}{f(x,i)},\\
    \cE_{\rho_T}(f)(x,i)&:=\sum_{j\in\rS} \int_{\R^d} p_{ij}(0,x,T,y)\rho_T(y,j)f(y,j)dy,\\
    \cE_{\rho_0}(f)(y,j)&:=\sum_{i\in\rS} \int_{\R^d} p_{ij}(0,x,T,y)\rho_0(x,i)f(x,i)dx,\\   
\end{aligned}
\end{equation*}
for $x,y\in\R^d$ and $i,j\in\rS$. 

Clearly, if $(\varphi,\widehat\varphi)$ solves the Schr\"odinger system \eqref{Equation : phi-phihat Schrodinger System, general theory} and  $\varphi(0,\cdot),\widehat\varphi(T,\cdot)$ are positive everywhere, then we have the following chain relation:
\begin{equation*}
    {
    \widehat\varphi(T,\cdot)
    }
        \stackrel{\cD}{\mapsto}
    {
    \frac{1}{\widehat\varphi(T,\cdot)}
    }
        \stackrel{\cE_{\rho_T}}{\mapsto} 
    {
    \varphi(0,\cdot)
    }
        \stackrel{\cD}{\mapsto}
    {
    \frac{1}{\varphi(0,\cdot)}
    }
        \stackrel{\cE_{\rho_0}}{\mapsto} 
    {
    \widehat\varphi(T,\cdot)
    }
\end{equation*}
Therefore, if we define $\cC:= \cE_{\rho_0} \circ \cD \circ \cE_{\rho_T} \circ \cD$, then $\cC(\widehat\varphi(T,\cdot))=\widehat\varphi(T,\cdot)$. In other words, $\widehat\varphi(T,\cdot )$ is a fixed point under $\cC$. Under certain conditions, the algorithm that consists of
repeatedly applying the map $\cC$ will in fact converge to the unique fixed point $\widehat\varphi(T,\cdot)$, with \textit{any} starting function. This algorithm is well-understood and studied in many SBP contexts \cite{fortet1940resolution,chen2016entropic,chen2022most}, as well as beyond\footnote[2]{This algorithm is known under different names - Sinkhorn-Knopp algorithm, Iterative Proportional Fitting Procedure, RAS method, etc.}. Below, we formulate this convergence result rigorously in the regime-switching setting.
\begin{thm}\label{Theorem: algorithm convergence - General}
    Suppose that Assumption \textup{\ref{Assumptions D}} holds and $\bR_0$ admits a density function $x\in\R^d\mapsto\bR_0(x,i)$ for every $i\in\rS$. In addition, suppose that for every $i,j\in \rS$, the density functions $\rho_0(\cdot,i)$ and $\rho_T(\cdot,j)$ have compact support $A^i_0$ and $A^j_T$ respectively, and the transition density function $p_{ij}(0,\cdot,T,\cdot)$ is continuous and positive on $A^i_0 \times A^j_T$. For a function $f$ on $\R^d\times\rS$, we define 
    \begin{equation}\label{eq: conv of algorithm}
    \norm{f}^2_{(A_T^1,\cdots,A_T^{|\rS|})}:=\sum_{j\in\rS}\norm{f(\cdot,j)}_{L^2(A_T^j)}^2. 
    \end{equation}
    Then, for any starting function $f_0:\R^d\times\rS \to \R_+$ that is bounded with $\inf_{A_T^j}\,f_0(\cdot,j)>0$ for every $j\in\rS$, 
    \begin{equation*}
        \frac{\cC^k(f_0)}{\|\cC^k(f_0)\|_{(A_T^1,\cdots,A_T^{|\rS|})}}=:f_k\rightarrow f_\infty \textit{ as }k\rightarrow\infty \textit{ under }\norm{\cdot}_{(A_T^1,\cdots,A_T^{|\rS|})} \footnote{This convergence also holds with respect to the Hilbert metric; for more details, see \cite{chen2016entropic,chen2022most}.},
    \end{equation*}  
    where $f_\infty$ is the unique fixed point under $\cC$ with  $\norm{f_\infty}_{(A_T^1,\cdots,A_T^{|\rS|})}=1$. 
    
    Moreover, the quadruple
    \begin{equation*}
    \begin{dcases}
        \widehat\varphi(T,\cdot):=f_\infty,\\
        \varphi(T,\cdot):=\rho_T(\cdot)/\widehat\varphi(T,\cdot),\\
        \varphi(0,\cdot):=\cE_{\rho_T} \circ \cD(f_\infty),\\
        \widehat\varphi(0,\cdot):= \rho_0(\cdot)/\varphi(0,\cdot)
    \end{dcases}
    \end{equation*}
    is the unique solution of the Schr\"odinger system \eqref{Equation : phi-phihat Schrodinger System, general theory} (up to rescaling by a constant).    
\end{thm}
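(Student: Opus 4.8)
The plan is to recast the recursion $\cC=\cE_{\rho_0}\circ\cD\circ\cE_{\rho_T}\circ\cD$ as a strict contraction in Birkhoff's projective (Hilbert) metric and then run a fixed-point argument, exactly as in the single-regime case \cite{fortet1940resolution,chen2016entropic,chen2022most}; the only genuinely new ingredient is bookkeeping the regime index, which is harmless because the positivity hypothesis on $p_{ij}(0,\cdot,T,\cdot)$ is imposed for \emph{all} pairs $i,j$, including $i\neq j$. Concretely, I would work in the cone $\cK$ of nonnegative functions $f$ on $\coprod_{j\in\rS}A_T^j$, viewed inside $\bigoplus_{j\in\rS}L^2(A_T^j)$, write $d_H$ for the Hilbert metric on its interior $\cK^\circ$ (those $f$ with each $f(\cdot,j)$ bounded and bounded away from $0$ on $A_T^j$), and recall that $d_H(f,g)$ equals the log-ratio of the essential supremum and infimum of $f/g$, so $d_H(f,g)<\infty$ whenever $f,g\in\cK^\circ$. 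The reciprocal map $\cD$ is a $d_H$-isometry on $\cK^\circ$ (and on the analogous cone over $\coprod_iA_0^i$), while $\cE_{\rho_T}$ and $\cE_{\rho_0}$ are positive linear integral operators. A first observation, used repeatedly, is that $\cE_{\rho_T}$ sends any nonnegative, not-$\rho_T$-a.e.-zero, $\rho_T$-integrable $g$ on $\coprod_jA_T^j$ \emph{into} the interior cone over $\coprod_iA_0^i$: on the compact set $A_0^i\times A_T^j$ the function $p_{ij}(0,\cdot,T,\cdot)$ is continuous and strictly positive, hence $0<c\le p_{ij}(0,x,T,y)\le C<\infty$ with $c:=\min_{i,j}\inf_{A_0^i\times A_T^j}p_{ij}$ and $C:=\max_{i,j}\sup_{A_0^i\times A_T^j}p_{ij}$, so for $x\in A_0^i$,
\[
c\sum_{j}\int_{A_T^j}\rho_T(y,j)g(y,j)\,dy\ \le\ \cE_{\rho_T}(g)(x,i)\ \le\ C\sum_{j}\int_{A_T^j}\rho_T(y,j)g(y,j)\,dy,
\]
with the common factor finite and strictly positive. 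Symmetrically for $\cE_{\rho_0}$; hence $\cC$ maps $\cK^\circ$ into itself, and (since $\cD$ inverts scaling and appears twice) $\cC$ is positively homogeneous of degree one, so a fixed point persists under positive scaling.

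Second, the Birkhoff contraction. The kernel $(x,i;y,j)\mapsto p_{ij}(0,x,T,y)\rho_T(y,j)$ of $\cE_{\rho_T}$ has projective diameter
\[
\sup\ \log\frac{p_{ij}(0,x,T,y)\,p_{i'j'}(0,x',T,y')}{p_{ij'}(0,x,T,y')\,p_{i'j}(0,x',T,y)}\ \le\ 2\log(C/c)\ <\ \infty,
\]
because the $\rho_T$-weights cancel and every factor, including the mixed ones $p_{ij'}$ on $A_0^i\times A_T^{j'}$ where the positivity of the off-diagonal transition density is used, lies in $[c,C]$; the same bound holds for $\cE_{\rho_0}$ with the $\rho_0$-weights cancelling. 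By Birkhoff's theorem each of $\cE_{\rho_T},\cE_{\rho_0}$ contracts $d_H$ with coefficient $\tanh\!\big(\tfrac12\log(C/c)\big)<1$, and since $\cD$ is an isometry, $\cC$ is a strict $d_H$-contraction on $\cK^\circ$ with coefficient $\lambda:=\tanh\!\big(\tfrac12\log(C/c)\big)^{2}<1$.

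Third, identify the fixed point and upgrade the mode of convergence. Assumptions \ref{Assumptions C}, \ref{Assumptions A}, \ref{Assumptions D} already guarantee that the SBP has the solution $\widehat\bP=\f(X_0,\Lambda_0)\g(X_T,\Lambda_T)\bR$ with $(\f,\g)$ solving \eqref{Equation: fg Schrodinger System with lambda}, hence that $(\varphi,\widehat\varphi)$ from \eqref{Equation:phi-definition}--\eqref{Equation:phihat-definition} solve \eqref{Equation : phi-phihat Schrodinger System, general theory}; tracing the chain displayed just before the theorem gives $\cC(\widehat\varphi(T,\cdot))=\widehat\varphi(T,\cdot)$, and the estimates of the first paragraph (applied to $\varphi(0,\cdot)=\cE_{\rho_T}\circ\cD(\widehat\varphi(T,\cdot))$ and then $\widehat\varphi(T,\cdot)=\cE_{\rho_0}\circ\cD(\varphi(0,\cdot))$, using that $\g$ is bounded and $p$ is bounded on $\R^d\times\R^d$) show that $\widehat\varphi(T,\cdot)$ restricted to $\coprod_jA_T^j$ lies in $\cK^\circ$. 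Strict contraction forces uniqueness of the $\cC$-fixed point up to positive scaling, so $f_\infty:=\widehat\varphi(T,\cdot)/\norm{\widehat\varphi(T,\cdot)}_{(A_T^1,\cdots,A_T^{|\rS|})}$ is the unique unit-norm fixed point; and for any admissible $f_0$ (bounded, $\inf_{A_T^j}f_0>0$, hence in $\cK^\circ$) we get $d_H(\cC^k(f_0),\widehat\varphi(T,\cdot))\le\lambda^{k}d_H(f_0,\widehat\varphi(T,\cdot))\to0$, whence $d_H(f_k,f_\infty)=d_H(\cC^k(f_0),\widehat\varphi(T,\cdot))\to0$ by projectivity of $d_H$. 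To pass to $\norm{\cdot}_{(A_T^1,\cdots,A_T^{|\rS|})}$ I would use the elementary sandwich estimate: $d_H(f_k,f_\infty)=\delta_k$ means $\alpha_kf_\infty\le f_k\le e^{\delta_k}\alpha_kf_\infty$ a.e.\ for some $\alpha_k>0$, the unit normalization pins $\alpha_k\in[e^{-\delta_k},1]$, and therefore $\norm{f_k-f_\infty}_{(A_T^1,\cdots,A_T^{|\rS|})}\le e^{\delta_k}-e^{-\delta_k}\to0$. That the quadruple solves \eqref{Equation : phi-phihat Schrodinger System, general theory} is then a direct substitution: the first equation is $\varphi(0,\cdot)=\cE_{\rho_T}\circ\cD(f_\infty)$ unwound, the second is $\cC(f_\infty)=f_\infty$ unwound, and the last two are the defining relations $\widehat\varphi(0,\cdot):=\rho_0/\varphi(0,\cdot)$, $\varphi(T,\cdot):=\rho_T/\widehat\varphi(T,\cdot)$; uniqueness up to a multiplicative constant is inherited from uniqueness of the $\cC$-fixed point, equivalently from the scaling freedom $(\varphi,\widehat\varphi)\mapsto(c\varphi,c^{-1}\widehat\varphi)$ already present in \eqref{Equation: fg Schrodinger System with lambda}.

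The step I expect to require the most care is the projective-diameter bound together with the ``one-step'' claim that $\cE_{\rho_T}$ (resp.\ $\cE_{\rho_0}$) maps into the interior cone regardless of the input: this is precisely where the hypotheses ``$A_0^i,A_T^j$ compact'' and ``$p_{ij}(0,\cdot,T,\cdot)$ continuous and positive there for all $i,j$'' are consumed, and it is the only place that genuinely differs from (though closely parallels) the single-regime argument of \cite{fortet1940resolution,chen2016entropic}. A secondary, purely technical point is the $d_H$-to-norm comparison, where one must keep the normalization in the same norm $\norm{\cdot}_{(A_T^1,\cdots,A_T^{|\rS|})}$ in which convergence is asserted.
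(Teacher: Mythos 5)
Your argument is correct, but it takes a genuinely different route from the paper. The paper proves the theorem by a reduction: it embeds the regime set into an extra real coordinate (regime $i\mapsto$ the interval $I_i=[2i,2i+1]$), packages $\rho_0,\rho_T,p_{ij}$ into densities $\tilde\rho_0,\tilde\rho_T,\tilde p$ on $\R^{d+1}$ with compact supports $\tilde A_0,\tilde A_T$ and a continuous positive kernel, invokes the known single-regime convergence result \cite[Proposition 4]{chen2016entropic} for the augmented system \eqref{modified system}, and then checks that any solution of the augmented system is constant in the extra coordinate on each $I_i$, so that the augmented iteration and norm coincide with $\cC$ and $\norm{\cdot}_{(A_T^1,\cdots,A_T^{|\rS|})}$, which transfers existence, uniqueness up to scaling, and convergence back to \eqref{Equation : phi-phihat Schrodinger System, general theory}. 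You instead redo the underlying mechanism directly in the multi-regime setting: Birkhoff--Hilbert projective-metric contraction of the positive integral operators $\cE_{\rho_T},\cE_{\rho_0}$ (finite projective diameter from $0<c\le p_{ij}\le C$ on the compact sets $A_0^i\times A_T^j$, with the $\rho$-weights cancelling in the cross-ratio), $\cD$ as a $d_H$-isometry, identification of the fixed point with $\widehat\varphi(T,\cdot)$ via the already-known solvability of the static system, and a sandwich estimate converting $d_H$-convergence of the normalized iterates into convergence in $\norm{\cdot}_{(A_T^1,\cdots,A_T^{|\rS|})}$. The paper's reduction is shorter and leans on the cited result as a black box; your argument is self-contained, produces an explicit geometric rate $\tanh\!\big(\tfrac12\log(C/c)\big)^2$, and establishes the Hilbert-metric convergence that the paper only mentions in a footnote, while consuming exactly the same hypotheses (compact supports and continuity/positivity of every $p_{ij}(0,\cdot,T,\cdot)$ on $A_0^i\times A_T^j$). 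Two small points deserve a sentence in a written-up version: the chain identity $\varphi(0,\cdot)=\cE_{\rho_T}\circ\cD(\widehat\varphi(T,\cdot))$ uses that $\varphi(T,\cdot,j)=\g(\cdot,j)$ vanishes Lebesgue-a.e.\ off $A_T^j$ (which follows from $\rho_T(\cdot,j)=\varphi(T,\cdot,j)\widehat\varphi(T,\cdot,j)$ once $\widehat\varphi(T,\cdot,j)>0$ a.e.), and the lower bound showing $\widehat\varphi(T,\cdot)\in\cK^\circ$ should be run by restricting the integral in \eqref{Equation:phihat-definition} to $A_0^i$, since the kernel bounds are only available there; neither is a genuine gap.
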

\begin{proof}
It suffices to rewrite the problem in terms of probability measures with Lebesgue densities on $\R^{d+1}$, where the convergence of the algorithm is well-understood. To this end, we consider the unit-length intervals $I_i:=[2i,2i+1]$ for $i\in \rS$, which are pairwise disjoint. In this proof we will denote an element in $\R^{d+1}$ by $\mathbf{x}=\db{x,x_{d+1}}$ where $x\in\R^d$ and $x_{d+1} \in \R$. For $\mathbf{x}=\db{x,x_{d+1}},\mathbf{y}=\db{y,y_{d+1}}\in\R^{d+1}$ and $i,j\in\rS$, we define
\begin{align*}     
    \tilde{\rho}_{0}(\mathbf{x})&:=\rho_0(x,i) \text{ if } x_{d+1} \in I_i, \;0\,\text{ otherwise}, \\
    \tilde{\rho}_{T}(\mathbf{y})&:=\rho_T(y,j) \text{ if } y_{d+1} \in I_j, \;0\,\text{ otherwise}, \\
    \tilde p(0,\mathbf{x},T,\mathbf{y}) &:= p_{ij}(0,x,T,y) \text{ if $x_{d+1} \in I_i$ and $y_{d+1} \in I_j$}, \;0\,\text{ otherwise}.  
\end{align*}
Constructed in this way, $\tilde{\rho}_{0}$ and $\tilde{\rho}_{T}$ are probability densities supported on $\tilde A_0:=\bigcup_{i\in\rS}A_0^i\times I_i$ and $\tilde A_T:=\bigcup_{j\in\rS}A_T^j\times I_j$ respectively, both of which are compact subsets of $\R^{d+1}$, and $\tilde p$ is everywhere continuous and positive on $\tilde{A}_0\times\tilde{A}_T$.
Now, consider the system
\begin{equation}\label{modified system}
    \begin{dcases}
    \Phi(0,\mathbf{x})=\int_{\tilde A_T} \tilde p(0,\mathbf{x},T,\mathbf{y})\Phi(T,\mathbf{y})d\mathbf{y}\\
    \widehat\Phi(T,\mathbf{y})=\int_{\tilde A_0}  \tilde p(0,\mathbf{x},T,\mathbf{y})\widehat\Phi(0,\mathbf{x})d\mathbf{x}\\
    \tilde{\rho}_{0}(\mathbf{x})=\Phi(0,\mathbf{x})\widehat{\Phi}(0,\mathbf{x})\\
    \tilde{\rho}_{T}(\mathbf{y})=\Phi(T,\mathbf{y})\widehat{\Phi}(T,\mathbf{y})
    \end{dcases}
\end{equation} 
for $(\mathbf{x,y})\in\tilde A_0\times\tilde A_T$. 

If we defined a mapping $\tilde \cC$ similar to $\cC$, as discussed at the beginning of this subsection, but with the chain relation given by the equations in the system \eqref{modified system}, then it is known from \cite[Proposition 4]{chen2016entropic} that the algorithm of repeatedly applying $\tilde\cC$ leads to a unique (up to a rescaling constant) solution to \eqref{modified system}. Namely, given any starting function $\tilde f_0(\mathbf{y}):=f_0(y)$, where $\mathbf{y}=\db{y,y_{d+1}}\in\tilde A_T$ and $f_0$ is as in the statement of the theorem, $\tilde\cC^k(\tilde f_0)/\big\|\tilde\cC^k(\tilde f_0)\big\|_{L^2(\tilde A_T)}$ converges to $\tilde f_\infty$ in $L^2(\tilde A_T)$, as $k\rightarrow\infty$, with $\tilde f_\infty$ being the unique fixed point under $\tilde\cC$ with $\|\tilde f_\infty\|_{L^2(\tilde A_T)}=1$. Thus, we obtain a unique solution to \eqref{modified system} by setting $\widehat\Phi(T,\cdot):=\tilde f_\infty $ and deriving $\widehat\Phi(0,\cdot),\Phi(T,\cdot),\Phi(0,\cdot)$ by the equations in \eqref{modified system} accordingly.

Observe that if a pair of functions $(\Phi,\widehat\Phi)$ solves \eqref{modified system}, then $\Phi$ and $\widehat\Phi$ must be constant in $x_{d+1}\in I_i$ and $y_{d+1}\in I_i$ respectively for every $i\in\rS$ due to the definitions of $\tilde p,\tilde{\rho}_{0},\tilde{\rho}_{T}$. Indeed, for every $i\in\rS$, if $x_{d+1}\in I_i$, then
\begin{align*}
    \Phi(0,\db{x,x_{d+1}})&=\int_{\tilde A_T} \tilde p(0,\mathbf{x},T,\mathbf{y})\Phi(T,\mathbf{y})d\mathbf{y}\\
    &=\sum_{j\in\rS} \int_{I_j} \int_{A_T^j} p_{ij}(0,x,T,y)\Phi(T,\db{y,y_{d+1}}) dy dy_{d+1} \\
    &= \sum_{j\in\rS}  \leb(I_j) \int_{A_T^j} p_{ij}(0,x,T,y)\Phi(T,\db{y,2j}) dy,  
\end{align*}
which is constant in $x_{d+1}\in I_i$, so the above equation reduces to
\begin{align*}
    \Phi(0,\db{x,2i}) &= \sum_{j\in\rS} \int_{A_T^j} p_{ij}(0,x,T,y)\Phi(T,\db{y,2j}) dy\\
    &=\sum_{j\in\rS} \int_{\R^d} p_{ij}(0,x,T,y)\Phi(T,\db{y,2j}) dy,
\end{align*}
where the last line is due to the fact that, by the fourth equation in \eqref{modified system},  $\Phi(T,\db{y,2j})=0$ if $y\notin A_T^j$ for every $j\in\rS$. By similarly rewriting the other equations in the system \eqref{modified system}, we see that \eqref{modified system} reduces to \eqref{Equation : phi-phihat Schrodinger System, general theory}, and the algorithm of iterating $\tilde\cC$ coincides with the one that iterates $\cC$, as for every $k\geq0$ and $j\in\rS$, $\tilde\cC^k(\tilde f_0)(\db{\cdot,y_{d+1}})=\cC^k(f_0)(\cdot,j)$ whenever $y_{d+1}\in I_j$, and hence $\big\|\tilde\cC^k(\tilde f_0)\big\|_{L^2(\tilde A_T)}=\norm{\cC^k(f_0)}_{(A^1_T,\cdots,A^d_T)}$.

Therefore, for $x,y\in\R^d$ and $i,j\in\rS$, by setting
\[
\begin{dcases}
    \varphi(0,x,i):=\Phi(0,\db{x,2i}),\;\varphi(T,x,i):=\Phi(T,\db{x,2i}),\\
    \widehat\varphi(0,y,j):=\widehat\Phi(0,\db{y,2j}),\;\widehat\varphi(T,y,j):=\widehat\Phi(T,\db{y,2j}),
\end{dcases}
\]
we have that $(\varphi,\widehat\varphi)$ is a solution to \eqref{Equation : phi-phihat Schrodinger System, general theory}. Moreover, $\tilde\cC^k(\tilde f_0)\rightarrow\widehat\Phi(T,\cdot)$ in $L^2(\tilde A_T)$ implies that  $\cC^k(f_0)\rightarrow\widehat\varphi(T,\cdot)$ under the norm defined in \eqref{eq: conv of algorithm}. Finally, the pair $(\varphi,\widehat\varphi)$ is unique up to rescaling because of the uniqueness of $(\Phi,\widehat\Phi)$.
\end{proof}

\section{An Example: The Unbalanced Schr\"odinger Bridge Problem}\label{Section: uSBP example}
In this section we present an application of the regime-switching jump-diffusion SBP investigated in the previous sections in the context of the \textit{unbalanced Schr\"odinger bridge problem} (uSBP). Unlike classical SBPs aiming at ``bridging'' probability distributions, the model of uSBP arises from the study of  stochastic particle systems with \textit{mass loss}, which results in the terminal distribution being a \textit{sub-probability measure} (i.e., with total mass less than $1$). Such a mechanism can be naturally interpreted in terms of regime switching, where we view the mass loss as the consequence of particles switching from the \textit{active} regime to the \textit{dead} regime. Based on this consideration, we will adopt the theory of regime-switching SBP developed above to examine a uSBP corresponding to a jump diffusion with killing. 

We will set up the model using the same framework of regime-switching jump diffusion as presented in \cref{Subsection: setup} but with only two regimes. Namely, we take $\rS=\set{\a,\d}$, with ``$\a$'' being the active regime and ``$\d$'' being the dead regime, and consider the jump diffusion determined by the SDE \eqref{SDE for Xt without hybrid jumps} on the Skorokhod space $\Omega=D([0,T];\R^d\times\set{\a,\d})$. We assume that, when restricted to regime ``$\d$'', all the SDE coefficients $b,\sigma,\gamma$ are constant zero, which means that there is no dynamic in the dead regime. Moreover, we require that, for every $(t,x)\in[0,T]\times\R^d$, 
\begin{enumerate}
    \item $Q_{12}(t,x)=V(t)$ for some function $V:[0,T]\rightarrow\R_+$ that provides the \textit{killing rate} at every moment $t\in[0,T]$;
    \item $Q_{21}(t,x)=0$, and hence switching from the dead regime to the active one is disabled;
    \item $\psi_{12}(t,x)=x$, i.e., there is no hybrid jump and hence a particle is kept at its location upon being killed.
\end{enumerate}
Under the above setting, the SDE \eqref{Reference SDE general theory} that characterizes the concerned regime-switching jump diffusion is reduced to
\begin{equation}\label{Reference SDE - uSBP}
\left\{
    \begin{aligned}
        dX_t&=b(t,X_{t},\Lambda_t)dt+\sigma(t,X_{t},\Lambda_t)dB_t + \int_{|z|\leq 1}\gamma(t,X_{t-},\Lambda_{t-} ,z)\tilde{N}(dt,dz)\\
        &\hspace{5.5cm} +\int_{|z|>1}\gamma(t,X_{t-},\Lambda_{t-} ,z)N(dt,dz) \\
    d\Lambda_t &= \int_{[0,V(t)]}\one_{\set{1}}(\Lambda_{t-})N_1(dt,dw).  
    \end{aligned}
\right.
\end{equation}
Let $\bR$ be a path measure on $\Omega$ whose canonical process $\set{(X_t,\Lambda_t)}_{[0,T]}$ is a strong Markov process satisfying \eqref{Reference SDE - uSBP}. Then, we can interpret the killing rate $V$ as $V(T)=0$, and for every $t\in[0,T)$,
    \begin{equation}\label{eq: def of V}
        V(t):=\lim_{\epsilon\searrow0}\frac{1}{\epsilon}\bR(\Lambda_{t+\epsilon}=\d\mid\Lambda_t=\a)\;\textup{ (assuming the limit exists)}.
    \end{equation}
Finally, we assume that $\Lambda_0\equiv\a$, which means that all the particles in the system start in the active regime.

Since $b,\sigma,\gamma$ are trivial functions within the dead regime, from now on, we will simply write $b(t,x,\a),\sigma(t,x,\a),\gamma(t,x,\a,z)$ as $b(t,x),\sigma(t,x),\gamma(t,x,z)$ respectively for $(t,x)\in[0,T]\times\R^d$ and $z\in\R^\ell$. Then, the integro-differential operator $L$ corresponding to \eqref{Reference SDE - uSBP} is given by, for $f$ of class $C^{1,2}_c$ and $(t,x)\in[0,T]\times\R^d$,
\begin{equation*}
\left\{
    \begin{aligned}
        Lf(t,x,\a)&=b(t,x)\cdot\nabla f(t,x,\a)+\frac{1}{2}\sum_{m,n}(\sigma\sigma^\top)_{mn}(t,x)\frac{\partial^2f}{\partial x_m \partial x_n}(t,x,\a)\\
        &\hspace{0.5cm}+\int_{\R^\ell} \bigg(f(t,x+\gamma(t,x,z),1)-f(t,x,1)\\
        &\hspace{4cm}-\one_{|z|\leq 1}\nabla f(t,x,1) \cdot \gamma(t,x,z)\bigg)\nu(dz) \\
        &\hspace{0.5cm}+ V(t)(f(t,x,\d)-f(t,x,\a)),\\
        Lf(t,x,\d) &= 0.
    \end{aligned}
\right.
\end{equation*}

For this part we will work under strong regularity assumptions on the coefficients $b,\sigma,\gamma$, which, as we will see below, guarantee that all the relevant assumptions from the previous sections are satisfied. Therefore, we will be able to put our general theory of SBP for regime-switching jump diffusions into practice, and demonstrate our techniques and results in a concrete uSBP setting. In particular, we adopt the following assumptions (inspired by the content from \cite{kunita2019stochastic}, Chapters 4-6) throughout this section:\\
\paragraph{\textbf{Assumptions (C$^\dagger$)}}\customlabel{Assumptions C-dagger}{\textbf{(C$^\dagger$)}}Suppose that Assumption \ref{Assumptions C} holds for the coefficients $b,\sigma,\gamma$ of the SDE \eqref{Reference SDE - uSBP}. In addition,
\begin{enumerate}
    \item as functions on $[0,T]\times\R^d$, $b$, $\sigma$, and $\gamma(\cdot,z)$ for every $z\in\R^\ell$, are of class $C^{1,\infty}_b$, and $\sigma\sigma^\top$ is uniformly positive definite; in addition, for every $(t,x)\in[0,T]\times\R^d$ and $k\in\mathbb{N}$, the function $z\mapsto\nabla^k\gamma(t,x,z)$ is twice continuously differentiable; 
    \item for every $(t,z)\in[0,T]\times\R^\ell$, the map $x\mapsto\phi_{t,z}(x):=x+\gamma(t,x,z)$ is a $C^\infty$-diffeomorphism of $\R^d$, and for every $(t,x)\in[0,T]\times\R^d$, the function $z\mapsto\phi^{-1}_{t,z}(x)$ is twice continuously differentiable;
    \item the intensity $\nu(dz)$ has a \textit{weak drift} and satisfies the \textit{order condition}\footnote[2]{A typical case is when $\nu(dz)$ is an $\alpha$-stable (or $\alpha$-stable-like) L\'evy measure for $0<\alpha<1$.} for some $\alpha\in(0,1)$ in the sense that, for every $\eta\in\R^\ell$, both $ \lim_{r\searrow0}\int_{r\leq\abs{z}\leq 1}(\eta\cdot z)\nu(dz)$ and $
    \lim_{r\searrow0}r^{\alpha-2}\int_{\abs{z}\leq r}(\eta\cdot z)^2\nu(dz)$ exist as finite values.
    \item The function $V:[0,T]\rightarrow\R_+$ is non-negative and continuous.\\
\end{enumerate}

Obviously, Assumption \ref{Assumptions C-dagger} is a strengthening of Assumption \ref{Assumptions E}.  Assumption \ref{Assumptions C-dagger} is also sufficient for Assumption \ref{Assumption: SDE} to hold for the SDE \eqref{Reference SDE - uSBP}. Let $\bR$ be a path measure that solves \eqref{Reference SDE - uSBP}. Since $\Lambda_0\equiv\a$, $\bR_0(\cdot,\a)$ is a probability measure on $\R^d$ and $\bR_0(\cdot,\d)$ is trivial. On the other hand, because $\psi_{12}(t,x)=x$ for every $(t,x)\in[0,T]\times\R^d$ and there is no dynamic in the dead regime, while $\bR_T(\cdot,\a)$ corresponds to the terminal position of the active particles, $\bR_T(\cdot,\d)$ is exactly the distribution of the ``killing'' location of the dead particles. Therefore, by solving the SBP as defined in \eqref{Definition:DynamicSBP} in such a setting, we tackle the uSBP by not only ``bridging'' the distributions of the surviving particles, but also controlling \textit{where} the dead particles were killed. Let us now formulate this uSBP rigorously.
\begin{dfn}[unbalanced Schr\"odinger bridge problem]
    Under Assumption \textup{\ref{Assumptions C-dagger}}, let $\bR$ be as described above. Given $\rho_0:=\bigg(\rho_0(\cdot,\a),0\bigg)$and $\rho_T:=\bigg(\rho_T(\cdot,\a),\rho_T(\cdot,\d)\bigg)$ two probability measures on $\R^d \times \set{\a,\d}$,
we aim to determine the path measure $\widehat\bP$ on $\Omega$ such that 
\begin{equation}\label{dfn: uSBP}
    \widehat\bP=\arg\min\set{\KL{\bP}{\bR} \text{ such that } \bP_0=\rho_0,\bP_T=\rho_T }.
\end{equation}
\end{dfn} 
\noindent We will simply write $\bR_0(\cdot,\a),\rho_0(\cdot,\a)$ as $\bR_0,\rho_0$ whenever there is no ambiguity.

Throughout \cref{Section: uSBP example}, we will also assume Assumption \ref{asm: A2 - leads to fg existence} holds for the above $\bR,\rho_0,\rho_T$, and hence Assumption \ref{Assumptions A} is again in effect for this part. In the upcoming subsections, we will establish that, under Assumption \ref{Assumptions C-dagger}, Assumptions \ref{Assumption: transition densities} and \ref{Assumption: general harmonic varphi} hold for the jump diffusion with killing under investigation, and hence Theorems \ref{Theorem: General P-hat} and \ref{Theorem: phi*phi-hat product density} apply to the uSBP in \eqref{dfn: uSBP}.
\subsubsection*{Background on uSBP} 
Stochastic dynamical systems with absorbing states or disappearing particles are ubiquitous in nature. These situations can range from radioactive decay \cite{zoia2008continuoustime} to cellular event triggers \cite{ham2024stochastic} to oceanic current tracers sinking \cite{chen2022most}. On the other hand, jump diffusion models with killing provide a mathematically rich framework for modeling such dynamical systems where the stochastic motion is described by a combination of drift, diffusion, and/or jump mechanics. The uSBP for this class of stochastic processes thus becomes an important tool for constructing models that are consistent with observational data. A ``modern'' version of the uSBP was introduced in \cite{chen2022most} as a mathematical model following Schr\"odinger's original quest\footnote[3]{This formulation was in contrast with the \textit{Feynman-Kac reweighting} framework previously adopted to study SBPs with mass loss (see the discussions in \cite[Section 5]{chen2022most} for details).}, where the reference measure $\bR$ is given by a diffusion process in $\R^d$ with killing, and the target distributions $\rho_0,\rho_T$ prescribe the distribution of the surviving particles and the total amount of mass lost by time $T$. 
This problem was approached by augmenting $\R^d$ to include a ``coffin state'' $\c$ to which the process would jump upon being killed, and then treated as a classic SBP for c\`adl\`ag paths in $\R^d \cup \set{\c}$. This version of the uSBP was then further studied in different contexts including neural networks 
\cite{pariset2023unbalanced}, entropic optimal transport \cite{lavenant2024mathematical}, and random walks on directed graphs \cite{eldesoukey2024schrodingers}. A variation of the uSBP was examined in \cite{eldesoukey2024excursion} and \cite{eldesoukey2025inferring} where the target at the terminal time is imposed only on the killing time and the killing location of the dead particles (but not the distribution of the surviving particles).

Compared with the version of uSBP in \cite{chen2022most}, we add two additional layers of complexity to our uSBP model \eqref{dfn: uSBP}: first, the reference measure $\bR$ is a \textit{jump-diffusion} with killing; second, the terminal target distribution $\rho_T$ not only encodes the distribution of the surviving particles, but also records \textit{where} the dead particles were killed. In a forthcoming paper, we will take one step further to study uSBPs under a diversity of settings. This will help shed light on the connections among existing uSBP works. 

\subsection{Jump Diffusion with Killing}
Following the same set-up as above and invoking Assumption \ref{Assumptions C-dagger}, let $\bR$ be the path measure whose canonical process $\set{(X_t,\Lambda_t)}_{[0,T]}$ is a strong Markov process satisfying the SDE \eqref{Reference SDE - uSBP}, and define $\tau:=\inf\set{t\geq0: \Lambda_t=\d}$. Then, $\tau$ is the regime-switching optional time, or, the \textit{killing time}. Therefore, $\set{X_t}_{[0,T]}$ under $\bR$ satisfies  the following SDE with killing: 
\begin{equation}\label{eqn: uSBP reference SDE Xt only}
\begin{aligned}
    dX_t&=b(t,X_t)dt +\sigma(t,X_t)dB_t\\
    &\hspace{0.5cm}+\int_{|z|\leq 1}\gamma(t,X_{t-},z)\tilde{N}(dt,dz) +\int_{|z|>1}\gamma(t,X_{t-},z)N(dt,dz),\;\textup{ for }0\leq t<\tau. \\
\end{aligned}
\end{equation}
Further, if we denote by $L_0$ the operator of the concerned jump diffusion without regime switching; that is, for every $f:[0,T]\times\R^d\rightarrow\R$ of class $C^{1,2}_c$ and $(t,x)\in(0,T)\times\R^d$,
\begin{equation}\label{eq:def of L0 usbp}
    \begin{aligned}
        L_0f(t,x)&=b(t,x)\cdot\nabla f(t,x)+\frac{1}{2}\sum_{m,n}(\sigma\sigma^\top)_{mn}(t,x)\frac{\partial^2f}{\partial x_m \partial x_n}(t,x)\\
        &\hspace{0.5cm}+\int_{\R^\ell} \bigg(f(t,x+\gamma(t,x,z))-f(t,x)-\one_{|z|\leq 1}\nabla f(t,x) \cdot \gamma(t,x,z)\bigg)\nu(dz),
    \end{aligned}
\end{equation}
then the generator of the jump diffusion with killing is $L_V:=L_0-V$. In addition, if $L_0^*$ denotes the adjoint operator of $L_0$ with respect to $L^2([0,T]\times\R^d)$, then $L_V^*:=L_0^*-V$ is the adjoint operator of $L_V$.
\begin{prop}\label{q existence and C12 uSBP}
    Suppose that Assumption \textup{\ref{Assumptions C-dagger}} holds for the coefficients of the SDE \eqref{Reference SDE - uSBP}. Let $\bR$, $\set{(X_t,\Lambda_t)}_{[0,T]}$, $\tau$, $L_V$ and $L_V^*$ be the same as above. Then, $\set{X_t:t<\tau}$ the jump diffusion with killing admits a transition density function in the sense that there exists a non-negative function $q(t,x,s,y)$ for $x,y\in\R^d$ and $0\leq t < s \leq T$ such that, for every Borel $B\subseteq \R^d$,
\begin{equation*}
    \bR(X_s \in B, s<\tau | X_t=x, t<\tau) = \int_B q(t,x,s,y)dy.
    \end{equation*}
Moreover, this transition density function $q$ satisfies that
\begin{enumerate}
    \item for every $0\leq t<s\leq T$, the function $(x,y)\in\R^d\times\R^d\mapsto q(t,x,s,y)$ is smooth with partial derivatives of all orders (including $q$ itself) being rapidly decaying in $y$ uniformly in $x$\footnote{That is, for any integer $k$ and any multi-index $\mathbf{i,j}$, $\sup_{x\in
        \R^d}\abs{\partial_y^{\mathbf{j}}\partial_x^{\mathbf{i}}q(t,x,s,y)}(1+|y|)^k$ converges to $0$ as $|y|\to\infty$.};
    \item for every $(s,y)\in(0,T]\times\R^d$, the function $(t,x)\in(0,s)\times\R^d\mapsto q(t,x,s,y)$ is of class $C^{1,\infty}_b$ 
    and satisfying  
\begin{equation}\label{eq: backward equation with killing}
\frac{\partial}{\partial t}q(t,x,s,y)=-L_Vq(\cdot,s,y) \,(t,x);
\end{equation}
    \item for every $(t,x)\in[0,T)\times\R^d$, the function $(s,y)\in(t,T)\times\R^d\mapsto q(t,x,s,y)$ is of class $C^{1,\infty}_b$
    and satisfying \begin{equation}\label{eq: forward equation with killing}
\frac{\partial}{\partial s}q(t,x,s,y)=L_V^*q(t,x,\cdot) \,(s,y).    
    \end{equation}
\end{enumerate}
\end{prop}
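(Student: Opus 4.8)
The plan is to reduce the statement to the (known) regularity theory of the jump diffusion \emph{without} killing, exploiting two special features of the present model: the killing rate $V$ depends only on time, and there are no hybrid jumps ($\psi_{12}(t,x)=x$, i.e. $\beta\equiv 0$). Concretely, one writes the killed transition density as a time-dependent scalar multiple of the transition density of the free jump diffusion generated by $L_0$, and then reads off all the required properties from the latter.

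First I would isolate the killing time. By the construction of the regime-switching mechanism, $\Delta_{12}(t,x)$ has length $Q_{12}(t,x)=V(t)$ and may be taken to be $[0,V(t)]$, so $\tau=\inf\set{t\ge 0:\Lambda_t=\d}$ is the first time a point $(t,w)$ of the Poisson random measure $N_1$ occurs with $w\le V(t)$; that is, $\tau$ is the first jump of an inhomogeneous Poisson process of rate $V(t)$. Since $\beta\equiv 0$, the measure $N_1$ does not appear in the equation \eqref{eqn: uSBP reference SDE Xt only} for $X$, so $\tau$ is independent of the driving noise $(B,N)$ of the spatial component; and since $\psi_{12}(t,x)=x$, the path $\set{X_r}$ experiences no jump at $\tau$. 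Hence, conditionally on $\set{t<\tau}$ and $X_t=x$, the process $\set{X_r:t\le r<\tau}$ solves the free SDE generated by $L_0$, while $\set{s<\tau}$ has conditional probability $\exp\!\left(-\int_t^s V(r)\,dr\right)$ and is independent of $\set{X_r: t\le r\le s}$ on that event. This gives the factorization
\[
q(t,x,s,y)=\exp\!\left(-\int_t^s V(r)\,dr\right) p_0(t,x,s,y),\qquad 0\le t<s\le T,
\]
where $p_0$ denotes the transition density of the (non-killed) jump diffusion generated by $L_0$, whose existence is part of the input cited below.

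Next I would invoke the regularity theory of \cite[Chapters~4--6]{kunita2019stochastic}, whose hypotheses are precisely what Assumption \ref{Assumptions C-dagger} encodes: the $C^{1,\infty}_b$-regularity of $b,\sigma,\gamma$, the $C^\infty$-diffeomorphism property of $x\mapsto x+\gamma(t,x,z)$, the uniform positive-definiteness of $\sigma\sigma^\top$, and the weak-drift/order condition on $\nu$ for some $\alpha\in(0,1)$. This yields the existence of $p_0$ together with the three facts that (i) for each fixed $0\le t<s\le T$ the map $(x,y)\mapsto p_0(t,x,s,y)$ is smooth with all partial derivatives rapidly decaying in $y$ uniformly in $x$; (ii) $(t,x)\mapsto p_0(t,x,s,y)$ is of class $C^{1,\infty}_b$ and solves the backward equation $\partial_t p_0=-L_0 p_0$; and (iii) $(s,y)\mapsto p_0(t,x,s,y)$ is of class $C^{1,\infty}_b$ and solves the forward equation $\partial_s p_0=L_0^* p_0$.

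Finally I would transfer these properties to $q$ via the scalar prefactor. As $V$ is continuous, $e^{-\int_t^s V(r)\,dr}$ is valued in $(0,1]$, is $C^1$ in $t$ and in $s$, and is constant in $(x,y)$; hence every smoothness, boundedness and decay assertion of (1)--(3) for $q$ follows verbatim from the corresponding one for $p_0$. The backward and forward equations \eqref{eq: backward equation with killing} and \eqref{eq: forward equation with killing} then drop out of the product rule, using that $L_0$ acts only in $x$ and $L_0^*$ only in $y$, on which the prefactor does not depend:
\[
\partial_t q=V(t)\,q+e^{-\int_t^s V}\partial_t p_0=V(t)\,q-L_0 q=-(L_0-V)q=-L_V q,
\]
\[
\partial_s q=-V(s)\,q+e^{-\int_t^s V}\partial_s p_0=-V(s)\,q+L_0^* q=(L_0^*-V)q=L_V^* q.
\]
The reduction itself is elementary; the step demanding the most care will be verifying that the cited results of \cite{kunita2019stochastic} genuinely apply under Assumption \ref{Assumptions C-dagger} and deliver, in the exact form stated in (1)--(3), both the \emph{uniform} rapid decay of $p_0$ and all its derivatives in $y$ and the $C^{1,\infty}_b$-regularity in each of the two time directions. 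Everything downstream of that is either this citation or the product-rule computation above.
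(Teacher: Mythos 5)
Your proposal is correct and follows essentially the same route as the paper: reduce to the killing-free jump diffusion, cite the smooth stochastic flow regularity theory of Kunita (which Assumption \ref{Assumptions C-dagger} is designed to invoke) for the free density $q_0$, write $q(t,x,s,y)=e^{-\int_t^s V(r)\,dr}q_0(t,x,s,y)$ using that $V$ is spatially homogeneous, and transfer regularity and the backward/forward equations by the product rule. The only difference is cosmetic: you spell out the probabilistic justification of the factorization (independence of the killing clock from the spatial noise, no hybrid jump at $\tau$) and the product-rule computations, which the paper leaves as ``easy to see.''
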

\begin{proof}
    Let us first consider the jump diffusion $\set{X_t}$ governed by the SDE \eqref{eqn: uSBP reference SDE Xt only} but without killing. Then, by the known results on jump diffusions in the smooth stochastic flow setting (\cite{kunita2019stochastic}, $\S$6.5-6.6), under Assumption \ref{Assumptions C-dagger}, $\set{X_t}$ admits a transition density function $q_0(t,x,s,y)$ for $x,y\in\R^d$ and $0\leq t<s\leq T$ that is sufficiently regular. In particular, we have that
    \begin{enumerate}
        \item for every $0\leq t<s\leq T$, the function $(x,y)\in\R^d\times\R^d\mapsto q_0(t,x,s,y)$ satisfies the first property in the statement of the proposition. 
        \item for every $(s,y)\in(0,T]\times\R^d$, the function $(t,x)\in (0,s)\times\R^d\mapsto q_0(t,x,s,y)$ satisfies the second property with $L_V$ in \eqref{eq: backward equation with killing} replaced by $L_0$;
        \item for every $(t,x)\in[0,T)\times\R^d$, the function  $(s,y)\in(t,T)\times\R^d\mapsto q_0(t,x,s,y)$ satisfies the third property with $L_V^*$ in \eqref{eq: forward equation with killing} replaced by $L_0^*$.
    \end{enumerate} 
Since the killing rate $V$ is homogeneous in the spatial variable, it is easy to see that $q$, the desired transition density function under killing, is given by
\begin{equation}\label{eq: formula of density under killing}
    q(t,x,s,y)=e^{-\int_t^sV(r)dr}\,q_0(t,x,s,y)
\end{equation}
for $x,y\in\R^d$ and $0\leq t<s\leq T$.
Given the aforementioned properties of $q_0$, and the assumption that $V$ is continuous on $[0,T]$, we immediately obtain all the properties of $q$ as claimed in the statement of the proposition.
\end{proof}
\noindent \textbf{Remark. }The existence of the transition density function $q(t,x,s,y)$ under killing can be extended to the case when the killing rate is spatial-dependent, i. e., $V=V(t,x)$ as a continuous and bounded function in $(t,x)\in[0,T]\times\R^d$. Indeed, $q(t,x,s,y)$ can be constructed from $q_0(t,x,s,y)$ following the standard Duhamel's method, but the regularity theory of $q(t,x,s,y)$ is more involved in such a case. For simplicity, we only consider the spatial-homogeneous killing rate in our uSBP example. However, if $q_0(t,x,s,y)$ is known to satisfy some refined estimates (e.g., certain heat kernel estimates), then the results below also extend to the case with a spatial-dependent killing rate.\\

Next, we re-connect the jump diffusion with killing with the regime-switching model in \eqref{Reference SDE - uSBP}, and re-interpret $q(t,x,s,y)$ found above in terms of the regime-switching process.

\begin{cor}\label{cor:p_11 p_12 satisfying D^1}
    Suppose that Assumption \textup{\ref{Assumptions C-dagger}} holds for the coefficients of the SDE \eqref{Reference SDE - uSBP}. Let $\bR$, $\set{(X_t,\Lambda_t)}_{[0,T]}$, $\tau$ and $q(t,x,s,y)$ be the same as in Proposition \ref{q existence and C12 uSBP}. Then, Assumption \textup{\ref{Assumption: D^ transition densities}} holds in the sense that   $\set{(X_t,\Lambda_t)}_{[0,T]}$ admits a transition density function $p_{ij}$ for $i=\a,j\in\set{\a,\d}$ as \[
    p_{11}(t,x,s,y):=q(t,x,s,y)\;\textit{ and }\;p_{12}(t,x,s,y) := \int_t^s V(r)q(t,x,r,y)dr
    \]
    for $x,y\in\R^d$ and $0\leq t<s\leq T$, and $p_{11},p_{12}$ satisfies the regularity conditions in \textup{\ref{Assumption: D^ transition densities}}.
\end{cor}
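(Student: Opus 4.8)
The plan is to proceed in two stages. First, identify $p_{11}$ and $p_{12}$ as the (sub-)transition densities of the two-regime process $\set{(X_t,\Lambda_t)}_{[0,T]}$ out of the active regime; second, transfer the regularity of the killed-process density $q$ established in \cref{q existence and C12 uSBP} to $p_{11}$ and $p_{12}$, thereby verifying \ref{Assumption: D^ transition densities}.

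For the identification, recall that the dead regime is absorbing, so $\set{\Lambda_s=\a}=\set{s<\tau}$, and conditioning on $(X_t,\Lambda_t)=(x,\a)$ is the same as conditioning on $\set{X_t=x,\,t<\tau}$. Hence $\bR(X_s\in B,\Lambda_s=\a\mid(X_t,\Lambda_t)=(x,\a))=\bR(X_s\in B,\,s<\tau\mid X_t=x,\,t<\tau)=\int_B q(t,x,s,y)dy$, which gives $p_{11}=q$. For $p_{12}$, I would use that $\psi_{12}(t,x)=x$ and that all dynamics vanish in the dead regime, so on $\set{\Lambda_s=\d}$ the spatial position is frozen from the killing time onward, and since the jump times of $N$ and $N_1$ are a.s. disjoint, $X_s=X_{\tau-}=X_\tau$ a.s.; thus $\bR(X_s\in B,\Lambda_s=\d\mid(X_t,\Lambda_t)=(x,\a))=\bR(\tau\in(t,s],\,X_\tau\in B\mid X_t=x,\,t<\tau)$. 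Because the killing rate $V$ is spatially homogeneous, $\tau$ is independent of the underlying un-killed jump diffusion \eqref{eqn: uSBP reference SDE Xt only}, with conditional killing-time density $V(r)e^{-\int_t^r V}$ for $r>t$; conditioning on $\tau=r$ and using the factorization $q(t,x,r,y)=e^{-\int_t^r V}q_0(t,x,r,y)$ (which holds because killing is decoupled from the spatial motion) then yields $p_{12}(t,x,s,y)=\int_t^s V(r)q(t,x,r,y)dr$. The same formula also follows from a one-step Markov decomposition at the first regime switch, using $p_{22}(r,z,s,y)=\delta_z(y)$.

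For the regularity, $p_{11}=q$ requires essentially nothing new: \ref{Assumption: D^ transition densities}(a) and (b) are exactly the content of properties (1)--(3) of \cref{q existence and C12 uSBP} specialized to $s=T$, respectively $t=0$, with the time-derivative clauses supplied by the backward equation \eqref{eq: backward equation with killing} and the forward equation \eqref{eq: forward equation with killing} together with the boundedness of $b,\sigma,\gamma,V$. For $p_{12}(t,x,s,y)=\int_t^s V(r)q(t,x,r,y)dr$, the spatial derivatives $\nabla_x^k p_{12}$ and $\nabla_y^k p_{12}$ for $k\le2$ are obtained by differentiating under the integral sign, which is legitimate because the corresponding derivatives of $q$ from \cref{q existence and C12 uSBP} are bounded in the relevant variables with $r$-integrable bounds on $(t,s)$ (resp. $(0,s)$), joint continuity following by dominated convergence; the $s$-derivative is immediate from the Leibniz rule, $\partial_s p_{12}(t,x,s,y)=V(s)q(t,x,s,y)$, which inherits the regularity of $q$.

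The main obstacle is the $t$-derivative $\partial_t p_{12}(t,x,T,y)$, since $t$ appears both as the lower limit of the $r$-integral and as the initial-time argument of $q$, while $q(t,x,r,\cdot)$ degenerates to $\delta_x$ as $r\downarrow t$. I would handle this by the substitution $u=r-t$, writing $p_{12}(t,x,T,y)=\int_0^{T-t}V(t+u)\,q(t,x,t+u,y)\,du$, and then differentiating in $t$: the endpoint contribution is $-V(T)q(t,x,T,y)$, which is finite, and inside the integral $\tfrac{d}{dt}q(t,x,t+u,y)=(\partial_1 q+\partial_3 q)(t,x,t+u,y)$, whose leading second-order singular contributions near $u=0$ cancel by the backward equation \eqref{eq: backward equation with killing} (as in the single-regime Kolmogorov-equation arguments behind \cref{q existence and C12 uSBP} and $\S$6.5--6.6 of \cite{kunita2019stochastic}), leaving a $u$-integrable integrand that is jointly continuous in $(t,x,y)$ with a bound Lebesgue-integrable in $y$; the $C^1$-in-time regularity of $b,\sigma,\gamma,V$ from \ref{Assumptions C-dagger} is what keeps the residual terms controllable. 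Once $\partial_t p_{12}$ is under control, all of the continuity, boundedness, and integrability requirements in \ref{Assumption: D^ transition densities} for $p_{11}$ and $p_{12}$ follow as described above.
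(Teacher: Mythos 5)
Your identification of $p_{11}$ and $p_{12}$ is essentially the paper's argument in a slightly different dress: the paper disintegrates the event $\set{\Lambda_t=\a,\Lambda_s=\d}$ over the killing time $r\in(t,s]$ using the strong Markov property and the limit definition \eqref{eq: def of V} of $V$, while you use the independence of $\tau$ from the spatial motion (legitimate here because $V$ is spatially homogeneous and $\beta\equiv0$) together with the factorization $q=e^{-\int V}q_0$, or equivalently a one-step decomposition at the first switch. Both routes give $p_{11}=q$ and $p_{12}(t,x,s,y)=\int_t^s V(r)q(t,x,r,y)\,dr$, and this part of your proposal is fine.

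The gap is in the regularity half, precisely at the point you yourself flag. First, a local flaw: after substituting $u=r-t$ you differentiate only $q(t,x,t+u,y)$ in $t$ and silently drop the $t$-dependence of the factor $V(t+u)$; Assumption \ref{Assumptions C-dagger} only makes $V$ continuous, so no $V'$ term is available and that manipulation is not licensed as written. Second, and more seriously, the claims that carry the argument — that after the backward-equation cancellation the integrand is $u$-integrable near $u=0$ with joint continuity, and, for the spatial derivatives, that the derivatives of $q$ admit ``$r$-integrable bounds'' allowing differentiation under the integral down to $r=t$ (resp.\ $r=0$) — are exactly what needs proof, and \cref{q existence and C12 uSBP} does not supply it: its boundedness statements are for each fixed pair of times and say nothing about the short-time regime $r\downarrow t$, where $q(t,x,r,\cdot)$ concentrates at $x$ and blows up on the diagonal at rate of order $(r-t)^{-d/2}$. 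Near $y=x$ this is precisely where finiteness, continuity and the bounded-in-$x$/integrable-in-$y$ conditions of \ref{Assumption: D^ transition densities} for $p_{12}$ are at stake (for $d\geq2$ even $\int_t^sV(r)q(t,x,r,x)\,dr$ is divergent when $V$ is bounded away from zero near $t$), so closing this step requires genuine short-time heat-kernel-type estimates and off-diagonal decay that neither your sketch nor the stated conclusions of \cref{q existence and C12 uSBP} provide. For comparison, the paper's own proof stops after deriving the formulas for $p_{11},p_{12}$ and asserts that the regularity ``follows trivially'' from \cref{q existence and C12 uSBP}; your extra effort is aimed at the right spot, but as written it asserts rather than establishes the needed short-time control.
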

\begin{proof}
It is clear from the set-up of the model \eqref{Reference SDE - uSBP} that for every $t\in[0,T]$, $t<\tau$ is equivalent to $\Lambda_t=\a$, and hence the dynamics of $\set{X_t:t<\tau}$ is exactly that of the particles in the active regime, which means that $p_{11}=q$ is the transition density function within regime ``$\a$''. 

Next, for any $0 \leq t < s \leq T$, if $\Lambda_t=\a$ and $\Lambda_s=\d$, then $\tau\in(s,t]$, and hence by the strong Markov property of $\set{X_t}$ and the definition \eqref{eq: def of V} of the killing rate $V$, we have that for every $x\in\R^d$ and Borel $B\subseteq\R^d$,

\begin{equation*}
    \begin{aligned}
     \bR(X_s\in B&,\Lambda_s=\d| (X_t,\Lambda_t)=(x,\a))\\
     &=\int_t^s \lim_{\epsilon\searrow0}\frac{1}{\epsilon}\bR(X_s\in B,\Lambda_s=\d,\tau\in(r,r+\epsilon]| (X_t,\Lambda_t)=(x,\a))dr\\
     &=\int_t^s \lim_{\epsilon\searrow0}\frac{1}{\epsilon}\bR(\Lambda_{r+\epsilon}=\d,X_r\in B,\Lambda_r=\a| (X_t,\Lambda_t)=(x,\a))dr\\
     &=\int_t^s \int_B \lim_{\epsilon\searrow0}\frac{1}{\epsilon}\bR(\Lambda_{r+\epsilon}=\d| (X_r,\Lambda_r)=(y,\a))q(t,x,r,y)dydr\\
     &=\int_B\int_t^s V(r)q(t,x,r,y)drdy.
    \end{aligned}
\end{equation*}
From here we conclude that the transition density function $p_{12}$ exists and takes the desired form in the statement. 

The regularity properties of $p_{11},p_{12}$ follow trivially from Proposition \ref{q existence and C12 uSBP}. 
\end{proof}
\subsection{Setting up the uSBP}
Following the setting and the discussions from the previous subsection, we are ready to apply the general theory established in Sections \ref{Section: General Theory}-\ref{Section: densities} to the uSBP \eqref{dfn: uSBP} whenever $\bR_{0T},\rho_0,\rho_T$ satisfy Assumptions \ref{asm: A2 - leads to fg existence} and \ref{Assumption: endpoint densities}. For pedagogical purposes, we will choose a particular configuration of $\bR_{0T},\rho_0,\rho_T$, which yields simpler steps and explicit expressions in the study of the uSBP. 

For the rest of this section, we assume that $\rho_0=\bR_0=\delta_{x_0}$, the Dirac delta distribution at some fixed point $x_0\in\R^d$. Then, according to \cref{cor:p_11 p_12 satisfying D^1}, the density function of $\bR_T$ is given by, for every $y\in\R^d$, \begin{equation}\label{eq: explicit formula of R_T usbp}
    \bR_T(y,\a)=q(0,x_0,T,y) \;\textup{ and }\;\bR_T(y,\d)=\int_0^TV(s)q(0,x_0,s,y)ds.
\end{equation}
Let us re-examine Assumption \ref{asm: A2 - leads to fg existence} with the help of the above explicit expressions. First, it is trivally true in this case that $\bR_{0T}\ll\bR_0\otimes\bR_T$ as both of them are $\delta_{x_0}\otimes\bR_T$.
Further, with $\rho_0=\delta_{x_0}$, it is clear that $\KL{\rho_0\otimes\rho_T}{\bR_{0T}}=\KL{\rho_T}{\bR_T}$. Hence, to have Assumption \ref{asm: A2 - leads to fg existence}, we need to make $\KL{\rho_T}{\bR_T}<\infty$ which, by \eqref{eq: explicit formula of R_T usbp}, expands to
\begin{equation}\label{eq: rewrite A2 for usbp}
\int_{\R^d}\log\frac{\rho_T(y,\a)}{q(0,x_0,T,y)}\rho_T(y,\a)dy+\int_{\R^d}\log\frac{\rho_T(y,\d)}{\int_0^TV(s)q(0,x_0,s,y)ds}\rho_T(y,\d)dy<\infty.
\end{equation}

From now on, we assume $\rho_T$ satisfies \eqref{eq: rewrite A2 for usbp}, and hence Assumption \ref{asm: A2 - leads to fg existence} is satisfied in this case. Next, we turn to Assumption \ref{Assumption: endpoint densities} and assume that $(\f,\g)$ is a solution to the Schr\"odinger system \eqref{Equation: fg Schrodinger System with lambda}. Under the current setting, the two equations in  \eqref{Equation: fg Schrodinger System with lambda} become 
\begin{equation}\label{eq: rewrite schrodinger sys usbp}
    \begin{cases}
        \f(x_0,\a)\bigg(\int_{\R^d}\g(y,\a)\bR_T(y,\a)dy+\int_{\R^d}\g(y,\d)\bR_T(y,\d)dy\bigg)=1,\\
        \g(y,\a)\f(x_0,\a)=\frac{\rho_T(y,\a)}{\bR_T(y,\a)},\;\g(y,\d)\f(x_0,\a)=\frac{\rho_T(y,\d)}{\bR_T(y,\d)},\,\textup{ for $\bR_T$-a.e. } y\in\R^d.
    \end{cases}
\end{equation}
Without loss of generality, we take $\f\equiv1$, which trivially satisfies the condition $\f\in L^1(\bR_0)$ required in \ref{Assumption: endpoint densities}. Then, combining \eqref{eq: rewrite schrodinger sys usbp} and \eqref{eq: explicit formula of R_T usbp}, we have the explicit formula of $\g$: 
\begin{equation}\label{eq: def of g in usbp}
\begin{dcases}
   \g(y,\a):=\frac{\rho_T(y,\a)}{q(0,x_0,T,y)},\; y\in S_\a:=\set{y:\,q(0,x_0,T,y)>0},\\
   \g(y,\d):=\frac{\rho_T(y,\d)}{\int_0^TV(s)q(0,x_0,s,y)ds},\;y\in S_\d:=\set{y:\,\int_0^T V(s)q(0,x_0,s,y)ds>0}.
\end{dcases}
\end{equation}
It is clear from \eqref{eq: explicit formula of R_T usbp} that  $S_\a=\textup{supp}(\bR_T(\cdot,\a))$ and $S_\d=\textup{supp}(\bR_T(\cdot,\d))$, and hence the definition \eqref{eq: def of g in usbp} has sufficiently determined $\g$.

Finally, the condition $\g$ being bounded under $\bR_T$ in \ref{Assumption: endpoint densities} corresponds to
\begin{equation}\label{eq: rewrite D2 for usbp}
    \sup_{y\in S_\a}\frac{\rho_T(y,\a)}{q(0,x_0,T,y)}+\sup_{y\in S_\d}\frac{\rho_T(y,\d)}{\int_0^TV(s)q(0,x_0,s,y)ds}<\infty.
\end{equation}
Therefore, by further requesting $\rho_T$ to satisfy \eqref{eq: rewrite D2 for usbp}, we have recovered Assumption \ref{Assumption: endpoint densities}.
\subsection{Solution to the uSBP}
We continue with the set-up above, i.e., suppose Assumption \ref{Assumptions C-dagger} holds for the coefficients of the SDE \eqref{Reference SDE - uSBP}, $q$ is the transition density function found in \cref{q existence and C12 uSBP}, $\rho_0=\bR_0=\delta_{x_0}$ for some $x_0\in\R^d$, $\rho_T$ satisfies \eqref{eq: rewrite A2 for usbp} and \eqref{eq: rewrite D2 for usbp}, and $\f\equiv1$ and $\g$ is defined by \eqref{eq: def of g in usbp}. Then, Assumptions \ref{Assumptions A} and \ref{Assumptions D^} are fulfilled under this set-up. 

By the general theory of SBP, the Schr\"odinger bridge $\widehat\bP$ in this case, which is also the solution to the uSBP \eqref{dfn: uSBP}, is given by 
\begin{equation}\label{eq: schrodinger bridge P^ usbp}
   \widehat\bP:=\bigg(\one_{\set{\Lambda_T=\a}}\,\frac{\rho_T(X_T,\a)}{q(0,x_0,T,X_T)}+\one_{\set{\Lambda_T=\d}}\,\frac{\rho_T(X_T,\d)}{\int_0^TV(s)q(0,x_0,s,X_T)ds}\bigg)\bR. 
\end{equation}
We now apply the results from the previous sections to study various aspects of the dynamics under $\widehat\bP$. 

\subsubsection*{The Schr\"odinger Potentials of the uSBP} We begin with writing down the Schr\"odinger potentials $(\varphi,\widehat\varphi)$. By \eqref{Definition: general varphi} and \cref{cor:p_11 p_12 satisfying D^1}, we obtain the explicit formula for $\varphi$: for $(t,x)\in[0,T)\times\R^d$,
\begin{equation}\label{eq: varphi-usbp}
\begin{cases}
        \varphi(t,x,\a)
        := \int_{S_\a}\frac{q(t,x,T,y)}{q(0,x_0,T,y)}\rho_T(y,\a)dy+\int_{S_\d}\frac{\int_t^T V(r)q(t,x,r,y)dr}{\int_0^TV(s)q(0,x_0,s,y)ds}\rho_T(y,\d)dy,\\
        \varphi(t,x,\d)
        :=\frac{\rho_T(x,\d)}{\int_0^TV(s)q(0,x_0,s,x)ds}\textup{ for }x\in S_\d,\;0\textup{  otherwise},
\end{cases}
\end{equation}
with $\varphi(T,\cdot)=\g$.
On the other hand, using \eqref{Equation:phihat-definition}, we can also express $\widehat\varphi$ as, for $(s,y)\in(0,T]\times\R^d$,
\begin{equation}\label{eq:varphihat-usbp}
    \begin{cases}
        \widehat\varphi(s,y,\a)=q(0,x_0,s,y),\\
        \widehat\varphi(s,y,\d)=\int_0^sV(r)q(0,x_0,r,y)dr,
    \end{cases}
\end{equation}
with $\widehat\varphi(0,\cdot,\a)=\delta_{x_0}$ and $\widehat\varphi(0,\cdot,\d)\equiv0$. 

It is easy to verify that $(\varphi,\widehat\varphi)$ defined above is indeed a solution to the system \eqref{Equation : phi-phihat Schrodinger System, general theory}, and the solution to the uSBP \eqref{dfn: uSBP} can be rewritten as $\widehat\bP=\frac{\varphi(T,X_T,\Lambda_T)}{\varphi(0,X_0,\Lambda_0)}\bR$. In addition, by \cref{q existence and C12 uSBP}, the function $(t,x)\mapsto\varphi(t,x,\a)$ is of class $C^{1,2}$ and for every $t\in[0,T)$, the functions $x\in \R^d\mapsto\varphi(t,x,\a),\nabla\varphi(t,x,\a),\nabla^2\varphi(t,x,\a)$ are continuous and bounded. Similarly, $(s,y)\mapsto\widehat\varphi(s,y,\a)$ is of class $C^{1,2}$ and $\widehat\varphi,\nabla\widehat\varphi,\nabla^2\widehat\varphi$ are continuous and bounded in $y\in\R^d$ for every $s\in(0,T]$.
\subsubsection*{The Schr\"odinger Bridge Dynamics of the uSBP}
Since Assumption  \ref{Assumptions D^} holds in the current set-up, we can readily apply \cref{Lemma: cL*p=0 for rsjd} to write down the forward and the backward PIDEs satisfied by $\varphi,\widehat\varphi$. However, with the explicit formulas of $\varphi,\widehat\varphi$ above, it is also easy to verify the relevant PIDEs directly. Indeed, by applying \cref{q existence and C12 uSBP} to \eqref{eq: varphi-usbp}, we get
\begin{equation*}
    \begin{aligned}
        \frac{\partial}{\partial t}\varphi(t,x,\a)&=\int_{\R^d}\frac{-L_Vq(\cdot,T,y)\,(t,x)}{q(0,x_0,T,y)}\rho_T(y,\a)dy-V(t)\frac{\rho_T(x,\d)}{\int_0^Tq(0,x_0,s,x)ds}\\
        &\hspace{3cm}+\int_{\R^d}\frac{\int_t^T V(r)(-L_Vq(\cdot,r,y))dr\,(t,x)}{\int_0^TV(s)q(0,x_0,s,y)ds}\rho_T(y,\d)dy\\
        &=(-L_V)\varphi(t,x,\a)-V(t)\varphi(t,x,\d).
    \end{aligned}
\end{equation*}
Recalling that $L_V=L_0-V$ with $L_0$ as in \eqref{eq:def of L0 usbp}, we arrive at \[
\frac{\partial}{\partial t}\varphi(t,x,\a)=-L_0\varphi(t,x,\a)+V(t)(\varphi(t,x,\a)-\varphi(t,x,\d)).
\]
Similarly, by \eqref{eq:varphihat-usbp} and, again, \cref{q existence and C12 uSBP}, we derive the PIDE satisfied by $\widehat\varphi$:
\begin{equation*}
    \begin{aligned}
        &\frac{\partial}{\partial s}\widehat\varphi(s,y,\a)=L_V^*\widehat\varphi(s,y,\a)=L^*_0\widehat\varphi(s,y,\a)-V(s)\widehat\varphi(s,y,\a),\\
        &\frac{\partial}{\partial s}\widehat\varphi(s,y,\d)=V(s)q(0,x_0,s,y)=V(s)\widehat\varphi(s,y,\a),
    \end{aligned}
\end{equation*}
where $L^*_0$ is the adjoint operator of $L_0$ with respect to $L^2([0,T]\times\R^d)$.\\

We summarize the above PIDEs into the following system:\\

\noindent \textbf{Unbalanced Dynamic Schr\"odinger System:}
\begin{equation*}
        \begin{dcases}
    \pran{\frac{\partial }{\partial t}+L_0}\varphi(t,x,\a)+V(t)(\varphi(t,x,\d)-\varphi(t,x,\a))=0,\;&\frac{\partial \varphi}{\partial t}(t,x,\d)=0.\\
    \pran{-\frac{\partial }{\partial s}+ L^*_0}\widehat\varphi(s,y,\a)-V(s)\widehat\varphi(s,y,\a)=0,\;&\frac{\partial \widehat\varphi}{\partial s}(s,y,\d)-V(s)\widehat\varphi(s,y,\a)=0.\\
    \varphi(0,x,\a)\widehat\varphi(0,dx,\a)=\rho_0(dx)=\delta_{x_0}(dx),\;&\varphi(0,x,\d)\widehat\varphi(0,x,\d)=0.\\
    \varphi(T,y,\a)\widehat\varphi(T,y,\a)=\rho_T(y,\a),\;&\varphi(T,y,\d)\widehat\varphi(T,y,\d)=\rho_T(y,\d).
    \end{dcases}
\end{equation*}
In the above system, the first two lines hold for $(t,x),(s,y)\in(0,T)\times\R^d$, the third line in the sense of measure, and the last line for $\rho_T$-a.e. $y\in\R^d$. \\

\noindent \textbf{Remark. }Although seemingly different, the above system is consistent with the Schr\"odinger system \eqref{Dynamic Schrodinger system - general} stated in the general theory. Indeed, upon extending $L_0$ from regime ``$\a$'' to regime ``$\d$'' trivially, i.e., setting all the coefficients of $L_0$ to zero in regime ``$\d$'', and taking into account  the ``one-way'' (from regime ``$\a$'' to regime ``$\d$'') only switching mechanism, we see that the above uSBP Schr\"odinger system is exactly \eqref{Dynamic Schrodinger system - general} where the jump diffusion component of $L$ is simply $L_0$, and the regime-switching rates are $Q_{\a\d}=V,\,Q_{\d\a}=0$.\\

Moreover, we are also ready to apply \cref{Theorem: phi*phi-hat product density} to the uSBP, which implies that the Schr\"odinger bridge $\widehat\bP$ admits the following transition and marginal densities:\\

\noindent \textbf{Unbalanced Schr\"odinger Bridge Transition Density Function:}
\begin{equation*}
    \begin{dcases}
        \widehat p_{11}(t,x,s,y)=\frac{\varphi(s,y,\a)}{\varphi(t,x,\a)}q(t,x,s,y)\\
        \widehat p_{12}(t,x,s,y)=\frac{\varphi(s,y,\d)}{\varphi(t,x,\a)}\int_t^sV(r) q(t,x,r,y)dr
    \end{dcases}
    \;\;\textup{ if }\varphi(t,x,\a)>0,\;0\textup{ otherwise, and}
\end{equation*}
\noindent \textbf{Unbalanced Schr\"odinger Bridge Marginal Density Function:}
\begin{equation}\label{eq:marginal density bP^ usbp}
    \begin{dcases}
        \widehat\bP_s(y,\a)=\varphi(s,y,\a)\widehat\varphi(s,y,\a)=\varphi(s,y,\a)q(0,x_0,s,y)\\
        \widehat\bP_s(y,\d)=\varphi(s,y,\d)\widehat\varphi(s,y,\d)=\varphi(s,y,\d)\int_0^sV(r)q(0,x_0,r,y)dr
    \end{dcases}
\end{equation}
for every $0\leq t<s\leq T$ and $x,y\in\R^d$.\\

We further observe that, since $\varphi(s,y,\d)=\g(y,\d)$ is constant in the temporal variable,
\begin{equation}\label{eq: relation between p^_11 and p^_12}
    \begin{aligned}
        \widehat p_{12}(t,x,s,y)
        &=\int_t^s  \frac{\varphi(r,y,\d)}{\varphi(t,x,\a)}V(r)q(r,x,r,y)dr\\
        &=\int_t^s  \frac{\varphi(r,y,\d)}{\varphi(r,y,\a)}V(r)\frac{\varphi(r,y,\a)}{\varphi(t,x,\a)}q(t,x,r,y)dr\\
        &=\int_t^s  \frac{\varphi(r,y,\d)}{\varphi(r,y,\a)}V(r)\widehat p_{11}(t,x,r,y)dr.
    \end{aligned}
\end{equation}
In view of \cref{cor:p_11 p_12 satisfying D^1}, the above relation between $\widehat p_{\a\a}$ and $\widehat p_{\a\d}$ suggests that the killing rate under $\widehat\bP$ is $\frac{\varphi(t,x,\d)}{\varphi(t,x,\a)}V(t)$. We will confirm in the upcoming discussion that this is indeed the case.
\subsubsection*{The Schr\"odinger Bridge SDE and Generator} As above, suppose that Assumption \ref{Assumptions C-dagger} holds, $\rho_0=\delta_{x_0}$,  and $\rho_T$ satisfies \eqref{eq: rewrite A2 for usbp} and \eqref{eq: rewrite D2 for usbp}. We now transport \cref{Theorem: General P-hat} into the uSBP setting. Again, only the dynamics within the active regime are relevant. In order to have \cref{Theorem: General P-hat}, we need to invoke Assumption \ref{Assumption: general harmonic varphi} which, in addition to the regularity and the PIDE of $\varphi$ discussed above, also requires $\varphi$ to be positive (except possibly at the terminal time). To this end, for the rest of this section, we impose an additional assumption:\\

\begin{enumerate}[label=($\star$)]
    \item \label{asm : star} For every $0\leq t< T$ and $x\in\R^d$, $q(t,x,T,y)>0$ for Lebesgue-a.e. $y\in S_\a$, and $\int_t^T V(r)q(t,x,r,y)dr>0$ for Lebesgue-a.e. $y\in S_\d$.\\
\end{enumerate}

\noindent We recall that $S_\a=\textup{supp}(\bR_T(\cdot,\a))$ and $S_\d=\textup{supp}(\bR_T(\cdot,\d))$. There are typical settings in which the property \ref{asm : star} is valid. For example, given \eqref{eq: formula of density under killing}, \ref{asm : star} is clearly true if  $q_0(t,x,s,y)$ is positive everywhere, where $q_0$ is the transition density function of the jump diffusion $\set{X_t}$ in the SDE \eqref{eqn: uSBP reference SDE Xt only} but without killing; this is equivalent to the transition distributions of $\set{X_t}$ being irreducible, which will be valid in the current setting if all the coefficients $b,\sigma,\gamma$ in \eqref{eqn: uSBP reference SDE Xt only} are time-homogeneous \cite{kunwai2020feller}.

Under Assumption \ref{asm : star}, it is easy to see from \eqref{eq: varphi-usbp} that $\varphi(t,x,\a)>0$ for every $(t,x)\in[0,T)\times\R^d$, which, combined with Assumption \ref{Assumptions C-dagger}, guarantees that Assumption \ref{Assumption: general harmonic varphi} is satisfied. Therefore, applying \cref{Theorem: General P-hat} yields the SDE and generator formulation of the Schr\"odinger bridge corresponding to the uSBP.\\

Let $\widehat\bP$ be the solution to the uSBP \eqref{dfn: uSBP} (and hence $\widehat\bP$ takes the form of \eqref{eq: schrodinger bridge P^ usbp}), and $\varphi$ be as in \eqref{eq: varphi-usbp}. Then, $\widehat\bP$ solves the following SDE:\\

\noindent \textbf{Unbalanced Schr\"odinger Bridge SDE:}
\begin{equation*}
    \left\{
    \begin{aligned}
        &dX_t=b^\varphi(t,X_{t})dt+\sigma(t,X_{t})dB_t + \int_{|z|\leq 1}\gamma(t,X_{t-} ,z)\tilde{N}^\varphi(dt,dz)\\
        &\hspace{3cm} +\int_{|z|>1}\gamma(t,X_{t-} ,z)N(dt,dz),\textup{ for }0<t<\tau:=\inf\set{s\geq 0:\Lambda_s=\d}, \\
    &d\Lambda_t = \int_{[0,V(t)]}\one_{\set{1}}(\Lambda_{t-})N_1(dt,dw),\qquad\textup{ for }0<t<T,
    \end{aligned}
\right.
\end{equation*}
with $(X_0,\Lambda_0)\equiv (x_0,\a)$ and $(X_T,\Lambda_T)$ having distribution $\rho_T$, where
\begin{equation*}
    \begin{aligned}
    b^\varphi(t,x):=b(t,x)&+\sigma\sigma^\top\nabla\log\varphi(t,x,\a) + \\
 & \int_{|z|\leq 1} \frac{\varphi(t,x+\gamma(t,x,z),\a)-\varphi(t,x,\a)}{\varphi(t,x,\a)}\gamma(t,x,z)\nu(dz),
    \end{aligned}
    \end{equation*}
and under $\widehat\bP$,  $\set{B_t}_{t\geq 0}$ is a $d$-dimensional Brownian motion, $N(dt,dz)$ is an $\ell$-dimensional Poisson random measure with intensity measure $\nu^\varphi(t,x;dz):=\frac{\varphi(t,x+\gamma(t,x,z),\a)}{\varphi(t,x,\a)}\nu(dz) $ and $\tilde{N}^\varphi(dt,dz):=N(dt,dz)-\nu^\varphi(t,x;dz)dt$ is its compensated counterpart, and  $N_1(dt,dw)$ is the Poisson random measure on $\R_+$ with intensity measure $\leb^\varphi(t,x;dw)=\frac{\varphi(t,x,\d)}{\varphi(t,x,\a)}\leb(dw)$.\\

Again, by trivially extending the coefficients $b,\sigma,\gamma$ from regime ``$\a$'' to ``$\d$'', we can write the generator associated with $\widehat\bP$, denoted $L_{\widehat\bP}$, as\\

\noindent \textbf{Unbalanced Schr\"odinger Bridge Generator (backward):}
\begin{equation*}
\begin{dcases}
     \begin{aligned}
        &L_{\widehat\bP} f(t,x,\a)\\
        &\quad=b^\varphi(t,x)\cdot \nabla f(t,x,\a) + \sum_{m,n}(\sigma\sigma^\top)_{mn}\frac{\partial^2 f}{\partial x_m \partial x_n}(t,x,\a) \\
        &\qquad+\int_{\R^\ell} \bigg(f(t,x+\gamma(t,x,z),\a)-f(t,x,\a)-\one_{|z|\leq 1}\gamma(t,x,z) \cdot \nabla f(t,x,\a)\bigg)\nu^\varphi(t,x,dz) \\
        &\hspace{7.5cm}+ \frac{\varphi(t,x,\d)}{\varphi(t,x,\a)}V(t)\pran{f(t,x,\d)-f(t,x,\a)},
\end{aligned}\\
L_{\widehat\bP}f(t,x,\d)=0,\qquad\textup{ for $f$ of class }C^{1,2}_c\textup{ and }(t,x)\in(0,T)\times \R^d.
\end{dcases}
\end{equation*}
It is clear from the form of $L_{\widehat\bP}$ and $L^*_{\widehat\bP}$ that the killing rate under $\widehat\bP$ is $\frac{\varphi(t,x,\d)}{\varphi(t,x,\a)}V(t)$, as we have observed in \eqref{eq: relation between p^_11 and p^_12}. 

Further, applying \cref{Theorem: phi*phi-hat product density} to the uSBP, we also get the adjoint of $L_{\widehat\bP}$ as\\

\noindent \textbf{Unbalanced Schr\"odinger Bridge Generator (forward):}
\begin{equation*}
    \begin{dcases}
      \begin{aligned}
    &L_{\widehat\bP}^*f(t,x,\a)\\
    &\quad=-\nabla \cdot (b^\varphi f)(t,x,\a) +\frac{1}{2} \sum_{m,n}\frac{\partial^2[(\sigma\sigma^\top)_{mn}f]}{\partial x_m \partial x_n}(t,x,\a) \\
    &\qquad+\int_{\R^\ell} \bigg[\det \nabla\phi_{t,z}^{-1}(x)f(t,\phi_{t,z}^{-1}(x),\a)\frac{\varphi(t,x,\a)}{\varphi(t,\phi_{t,z}^{-1}(x),\a)}-f(t,x,\a)\frac{\varphi(t,\phi_{t,z}(x),\a)}{\varphi(t,x,\a)} \\
    &\hspace{4cm}+\one_{|z|\leq 1}\nabla\cdot \pran{\gamma(t,x,z)f(t,x,\a)\frac{\varphi(t,\phi_{t,z}(x),\a)}{\varphi(t,x,\a)}  }\bigg]\nu(dz) \\
    &\hspace{8.5cm}-\frac{\varphi(t,x,\d)}{\varphi(t,x,\a)}V(t)f(t,x,\a),
    \end{aligned}\\
    L_{\widehat\bP}^*f(t,x,\d)=\frac{\varphi(t,x,\d)}{\varphi(t,x,\a)}V(t)f(t,x,\a), \qquad\textup{ for $f$ of class }C^{1,2}_c\textup{ and }(t,x)\in(0,T)\times \R^d. 
    \end{dcases}
\end{equation*}

By \eqref{eq:marginal density bP^ usbp} and \cref{q existence and C12 uSBP}, it is possible to verify directly that the marginal density function of the surviving particles satisfies that \[
\frac{\partial}{\partial t}\widehat\bP_t(x,\a)=L_{\widehat\bP}^*\widehat\bP_t(x,\a),\;\textup{ for }(t,x)\in(0,T)\times\R^d,
\]
as stated in \cref{Theorem: phi*phi-hat product density}. On the other hand, since $\varphi(t,x,\d)$ is constant in $t$, a simple calculation leads to 
\[
\frac{\partial}{\partial t}\widehat\bP_t(x,\d)=\varphi(t,x,\d)V(t)q(0,x_0,t,x)=\frac{\varphi(t,x,\d)}{\varphi(t,x,\a)}V(t)\widehat\bP_t(x,\a),
\]
which is exactly $\frac{\partial}{\partial t}\widehat\bP_t(x,\d)=L_{\widehat\bP}^*\widehat\bP_t(x,\d)$, the PIDE the marginal density function of the dead particles are expected to satisfy. The dynamics of $\widehat\bP_t(\cdot,\d)$ once agains confirms that the killing rate under $\widehat\bP$ is $\frac{\varphi(t,x,\a)}{\varphi(t,x,\a)}V(t)$. 
\subsubsection*{Stochastic Control Formulation of the uSBP}
Let $\g$ be defined as in \eqref{eq: def of g in usbp} and $\varphi$ be as in \eqref{eq: varphi-usbp}. To formulate our uSBP in terms of a stochastic control problem (SCP) as we did in \cref{Subsection: stochastic control}, we need to further recover Assumption \ref{asm : log varphi C12b}, which entails extending the regularity and the positivity of $\varphi(t,x,\a)$ to $t\in[0,T]$. To this end, we invoke the condition:\\

\begin{enumerate}[label=($\star\star$)]
    \item \label{asm: double star} The functions $y\in S_\a\mapsto\g(y,\a)$ and $y\in S_\d\mapsto\g(y,2)$ are positive Lebesgue-a.e. Moreover, for every compact set $K\subseteq\R^d$,
\begin{equation*}
     \int_0^T\sup_{x\in K}\abs{\nabla\log\varphi(t,x,\a)}^2dt<\infty\textup{ and }\int_0^T\sup_{x\in K}\abs{\frac{\varphi(t,x,\d)}{\varphi(t,x,\a)}}^2dt<\infty.
    \end{equation*}

\end{enumerate}
\noindent Clearly, the condition \ref{asm: double star} relies on the properties of $q(t,x,T,y)$ and $\rho_T$. \\

With the collective effect of \eqref{eq: rewrite A2 for usbp}, \eqref{eq: rewrite D2 for usbp} and Assumptions \ref{Assumptions C-dagger}, \ref{asm : star} and \ref{asm: double star}, we have validated Assumption \ref{asm : log varphi C12b} in the setting of the uSBP. Therefore, by the results in \cref{Subsection: stochastic control}, the uSBP solution $\widehat\bP$ is identical to $\bP^{(u^*,\theta^*,\xi^*)}$, which is the Girsanov transform from $\bR$ by the measurable functions $u^*:(0,T)\times\R^d\rightarrow\R^d$, $\theta^*:(0,T)\times\R^d\times\R^\ell\rightarrow(-\infty,1)$ and $\xi^*:(0,T)\times\R^d\rightarrow(0,\infty)$. Moreover, the triple $(u^*,\theta^*,\xi^*)$ is the solution to\\

\noindent \textbf{Unbalanced Stochastic Control Problem:}
\begin{equation*}
\begin{aligned}
    (u^*,\theta^*,\xi^*)
    &:= \arg\min_{(u,\theta,\xi) \in \cU(\rho_T)} \bE_{\bP^{(u,\theta,\xi)}}\bigg[\int_0^T\bigg(\frac{1}{2}\abs{u(t,X_t)}^2\\
    &\hspace{2cm}+\int_{\R^\ell}\brac{(1-\theta(t,X_t,z))\log(1-\theta(t,X_t,z))+\theta(t,X_t,z)}\nu(dz)\\
    &\hspace{4cm}+ V(t)\brac{\xi(t,X_t)\log \xi(t,X_t)+1-\xi(t,X_t)}\bigg)dt\bigg]
\end{aligned}    
\end{equation*}
and the solution to this problem is given by, for every $(t,x)\in(0,T)\times\R^d$,
\begin{equation*}
   u^*(t,x):=-\sigma^\top\nabla\log \varphi(t,x,\a),\; \theta^*(t,x,z) := 1-\frac{\varphi(t,x+\gamma(t,x,z),\a)}{\varphi(t,x,\a)},\;\xi^*(t,x) := \frac{\varphi(t,x,\d)}{\varphi(t,x,\a)}. 
\end{equation*}
  
\subsection{Further Questions on the uSBP}
In this example, we discussed one specific uSBP scenario: when the target is on the spatial distributions of the surviving and the killed particles. Since the target distributions are often extracted from the actual observations of the system, a further question that is worth exploring is how to adapt the uSBP formulation to scenarios where the observations provide access to different levels of information about the particles. As mentioned earlier, a few examples involving different observational constraints have been addressed in the literature \cite{eldesoukey2024excursion,chen2022most,eldesoukey2025inferring}, but a general investigation of this question is still missing. We hope to establish a unified framework, based on the regime-switching approach, to systematically study and compare uSBPs in a broad range of target distribution scenarios. The key observation is that by varying $\psi(t,x)$, we can choose which information to ``record'' about the killed particles, and this exact information will be exhibited in the terminal distribution in regime $\d$. In this way, we can treat events that occur in the \textit{interior} of the path as part of the reference \textit{terminal} data, and ``match'' it with our target when solving the uSBP. 

The regime-switching approach also enables us to study uSBP models in which, in addition to active particles being killed, the dead particles are also allowed to revive. This is done simply by changing the regime-switching rate $Q_{21}$ to be non-zero. The general theory of the regime-switching SBP can then be applied to derive properties of the unbalanced Schr\"odinger bridge in this setting. Moreover, it is also possible to distinguish between ``revived'' and ``never-killed'' particles by introducing a third regime and letting $Q_{23}$ be the revival rate (instead of $Q_{21}$). 

In summary, the regime-switching approach may lead to a deeper and more comprehensive theory of the uSBP, and we hope this work helps build its foundations.

\bibliographystyle{plain} 
\bibliography{references}

\end{document}